\numberwithin{equation}{section}
\newtheorem{thm}{Theorem}[section]
\newtheorem{lem}[thm]{Lemma}
\newtheorem{prop}[thm]{Proposition}
\newtheorem{rem}{Remark}[section]
\newtheorem{example}[thm]{Example}
\newtheorem{defin}[thm]{Definition}
\newcommand{\eq}[1]{(\ref{#1})}
\renewcommand{\Re}{\operatorname{\rm Re}}
\renewcommand{\Im}{\operatorname{\rm Im}}
\newcommand{\beqast}{\begin{eqnarray*}}
\newcommand{\eqast}{\end{eqnarray*}}
\newcommand{\beqa}{\begin{eqnarray}}
\newcommand{\eqa}{\end{eqnarray}}
\newcommand{\bbe}{\begin{equation}}
\newcommand{\ee}{\end{equation}}
\renewcommand{\Re}{\operatorname{\rm Re}}
\renewcommand{\Im}{\operatorname{\rm Im}}
\newcommand{\bC}{{\mathbb C}}
\newcommand{\bE}{{\mathbb E}}
\newcommand{\bQ}{{\mathbb Q}}
\newcommand{\bR}{{\mathbb R}}
\newcommand{\bZ}{{\mathbb Z}}
\newcommand{\cA}{{\mathcal A}}
\newcommand{\cF}{{\mathcal F}}
\newcommand{\cG}{{\mathcal G}}
\newcommand{\cS}{{\mathcal S}}
\newcommand{\cL}{{\mathcal L}}
\newcommand{\cC}{{\mathcal C}}
\newcommand{\cU}{{\mathcal U}}
\newcommand{\barX}{{\bar X}}
\newcommand{\uX}{{\underline X}}
\newcommand{\phipq}{{\phi^+_q}}
\newcommand{\phimq}{{\phi^-_q}}
\newcommand{\hG}{{\hat G}}
\newcommand{\Om}{{\Omega}}
\newcommand{\al}{\alpha}
\newcommand{\be}{\beta}
\newcommand{\de}{\delta}
\newcommand{\eps}{\epsilon}
\newcommand{\la}{\lambda}
\newcommand{\lp}{\lambda_+}
\newcommand{\lm}{\lambda_-}
\newcommand{\mum}{\mu_-}
\newcommand{\mup}{\mu_+}
\newcommand{\mumpr}{\mu'_-}
\newcommand{\muppr}{\mu'_+}
\newcommand{\num}{\nu_-}
\newcommand{\nup}{\nu_+}
\newcommand{\sg}{\sigma}
\newcommand{\om}{\omega}
\newcommand{\ze}{\zeta}
\newcommand{\ga}{\gamma}
\newcommand{\gap}{\gamma_+}
\newcommand{\gam}{\gamma_-}
\newcommand{\gappr}{\gamma'_+}
\newcommand{\gampr}{\gamma'_-}
\newcommand{\Ga}{\Gamma}
\newcommand{\dd}{\partial}
\newcommand{\bfo}{{\bf 1}}
\newcommand{\supp}{{\mathrm{supp}}}
\newcommand{\SM}{{\mathrm {SM}}}
\newcommand{\sSLM}{{\mathrm {sSLM}}}
\newcommand{\SLM}{{\mathrm {SLM}}}
\begin{document}

\title[L\'evy models amenable to efficient calculations]
{ L\'evy models amenable to efficient calculations}

\author[
Svetlana Boyarchenko and
Sergei Levendorski\u{i}]
{
Svetlana Boyarchenko and
Sergei Levendorski\u{i}}

\begin{abstract}
In our previous publications (IJTAF 2019, Math. Finance 2020), we
introduced a general class of SINH-regular processes and demonstrated
that efficient numerical methods for the evaluation of the
Wiener-Hopf factors and various probability distributions (prices of
options of several types) in L\'evy models can be developed using
only a few general properties of the characteristic exponent
$\psi$. Essentially all popular L\'evy processes enjoy these
properties. In the present paper, we define classes of
Stieltjes-L\'evy processes (SL-processes) as processes with
completely monotone L\'evy densities of positive and negative jumps,
and signed Stieltjes-L\'evy processes (sSL-processes) as processes
with densities representable as differences of completely monotone
densities. We demonstrate that 1) all crucial properties of $\psi$
are consequences of the representation
$\psi(\xi)=(a^+_2\xi^2-ia^+_1\xi)ST(\cG_+)(-i\xi)+(a^-_2\xi^2+ia^-_1\xi)ST(\cG_-)(i\xi)+(\sg^2/2)\xi^2-i\mu\xi$,
where $ST(\cG)$ is the Stieltjes transform of the (signed) Stieltjes
measure $\cG$ and $a^\pm_j\ge 0$; 2) essentially all popular
processes other than Merton's model and Meixner processes are
SL-processes; 3) Meixner processes are sSL-processes;
4) under a natural symmetry condition, essentially all popular
classes of L\'evy processes are SL- or sSL-subordinated Brownian
motion.

\end{abstract}

\thanks{
\emph{S.B.:} Department of Economics, The
University of Texas at Austin, 2225 Speedway Stop C3100, Austin,
TX 78712--0301, {\tt sboyarch@eco.utexas.edu} \\
\emph{S.L.:}
Calico Science Consulting. Austin, TX.
 Email address: {\tt
levendorskii@gmail.com}}

\maketitle

\noindent
{\sc Key words:} Stieltjes-L\'evy processes, sinh-acceleration, SINH-regular L\'evy processes, hyper-exponential jump-diffusion model, KoBoL, CGMY, Normal inverse Gaussian processes, Normal Tempered Stable L\'evy processes, Variance Gamma processes,  Meixner processes, beta-model, meromorphic processes, Hyperbolic processes, Generalized Hyperbolic distributions, subordinated Brownian Motion

\noindent
{\sc MSC2010 codes:} 6051, 60G52,60-08,65C05,91G20

\tableofcontents

\section{Introduction}
The Fourier/Laplace transform and the Wiener-Hopf factorization
technique allow one to express probability distributions of L\'evy
processes, joint probability distributions of a L\'evy process and
its extrema, as well as prices of wide classes of options, as
integrals and repeated integrals.  Wiener-Hopf factors can also be
expressed as exponentials of integrals.  In many cases,
straightforward evaluation of these integrals using Fast Fourier
transform and/or Hilbert transform is inefficient, hence additional
tricks are needed.  Efficient calculations are possible if the
technique of conformal deformations is applied to each integral, and
then the simplified trapezoid rule is used.  The deformations in all
integrals must be in a certain agreement. See
\cite{iFT,paraHeston,paraLaplace,MarcoDiscBarr,one-sidedCDS,pitfalls,SINHregular,Contrarian,BSINH}
for details and comparison with other methods; note that in
\cite{Contrarian}, triple and quadruple integrals are efficiently
calculated.

The conformal deformation technique relies  on several  general properties of the characteristic exponent $\psi$; specific properties of 
different classes of processes are essentially irrelevant for efficient calculations.
The two most important properties are: (1) $\psi$ is analytic in a union of an open strip and a cone around or adjacent to $\bR$;  
(2) $\Re\psi(\xi)\to+\infty$ as $\xi\to \infty$ in an open sub-cone. In \cite{SINHregular},  we used properties (1) and (2)
to define a class of SINH-regular processes,  and mentioned that all popular classes of L\'evy processes are SINH-regular. 
If $X$ is SINH-regular, then flat contours of integration in the Fourier inversion formula and formulas for the Wiener-Hopf factors
can be deformed using  conformal maps of the form 
$\xi\mapsto i\om_1+b\sin(i\om+\xi)$, where a proper choice of $\om_1, \om\in \bR$ and $b>0$ depends on the properties of
the integrand.  The corresponding change of variables 
$\xi= i\om_1+b\sin(i\om+y)$ ({\em sinh-acceleration}) reduces
calculations to an integral over a horizontal line, with the
integrand decaying at infinity much faster than the initial one.
In the case of Laplace inversion, we use changes of variables of the
form $q=\sg+ib\sin(i\om+y)$, where $\sg, b, \om>0$.

The new integrand remains analytic in a strip around the line of
integration, hence the infinite trapezoid rule is very
efficient. The error of
the infinite trapezoid rule decays as $\exp[-2\pi d/\ze]$, where
$d$ is the half-width of the strip of analyticity around the line of
integration and $\ze$ is the step (see, e.g., Thm. 3.2.1 in
\cite{stenger-book}).  Therefore, if the strip of analyticity is not too
narrow, it is relatively easy to satisfy a very small error tolerance
for the discretization error.
Sinh-acceleration greatly increases the rate of decay at infinity,
and thus a moderate or even small number of terms in the simplified
trapezoid rule suffices to satisfy a small error tolerance
$\eps$. Typically, the complexity of the scheme is of the order of
$E\ln E$, where $E=\ln(1/\eps)$.

Let $q$ be the dual variable in the Laplace inversion formula. When
first passage probabilities are calculated and barrier and lookback
options are priced, the Wiener-Hopf factors must be calculated for
all points $q$ on a deformed contour in the Bromwich integral, hence
zeros of $q+\psi(\xi)$ must be avoided or, if some of the zeros are
crossed, these zeros must be accurately calculated and the residue
theorem needs to be applied. Therefore, it is necessary to know where
these zeros are located.  If the Gaver-Stehfest method is used, then
only $q>0$ are used. Almost all popular L\'evy models have two
additional properties: (3) the cone of analyticity of $\psi$ is
$\bC\setminus i\bR$, and (4) for $q>0$, equation $q+\psi(\xi)=0$ has
no solutions in $\bC\setminus i\bR$. Hence, the problem of locating
the zeros becomes trivial.  Conditions (1)-(4) are used in
\cite{paired} to define a class of strongly regular L\'evy processes
of exponential type.

In the case of stable L\'evy processes, there is no strip of
analyticity of $\psi$ (formally, the strip degenerates into $\bR$),
but $\psi$ enjoys properties (2)-(4) in the cone $\bC\setminus i\bR$,
and efficient modifications of the procedures developed for
SINH-processes are possible. See \cite{ConfAccelerationStable}, where
efficient procedures for the evaluation of stable distributions are
developed and the relative efficiency of several families of
conformal deformations is discussed.

Popular classes of L\'evy processes are constructed either by defining
the L\'evy measure directly or via subordination of the Brownian
motion.  In the present paper, we suggest a general construction of a
wide class of processes, which enjoy properties (1)-(4) and contain
all popular models except for Merton's model.
 We construct the Laplace exponents of the positive and negative jumps generalizing the definition of the Bernstein functions,
  and  demonstrate that the crucial properties of $\psi$
are consequences of the representation 
\bbe\label{eq:sSLrepr}
\psi(\xi)=(a^+_2\xi^2-ia^+_1\xi)ST(\cG^0_+)(-i\xi)+(a^-_2\xi^2+ia^-_1\xi)ST(\cG^0_-)(i\xi)+(\sg^2/2)\xi^2-i\mu\xi, 
\ee
where $ST(\cG)$ is the Stieltjes transform of the (signed) Stieltjes measure $\cG$,  $a^\pm_j\ge 0$, and $\sg^2\ge0$, $\mu\in\bR$.
 If $\cG^0_\pm\ge 0$, we call $X$ a Stieltjes-L\'evy process
(SL-process\footnote{Note that the Laplace exponent of an
SL-subordinator is a complete Bernstein function.
}). SL-processes enjoy properties (1)-(4), and the L\'evy densities
of positive and negative jump components are completely monotone. If
at least one of $\cG_\pm$ is a signed measure, then (1)-(3) continue
to hold but (4) may fail; the L\'evy density of the corresponding
jump component is the difference of two completely monotone
functions, thus it can be non-monotone. We say that $X$ is a signed
Stieltjes-L\'evy process (sSL-process).

The rest of the paper is organized as follows. In Section
\ref{s:SINHregular}, we give the definition of SINH-regular
processes. We slightly change the definition given in
\cite{SINHregular}, calculate the strips and cones of analyticity for
essentially all popular classes of L\'evy processes, and derive upper
and lower asymptotic bounds for $|\psi|$ and $\Re \psi$,
respectively. These bounds are necessary ingredients for the
conformal acceleration method.  In Section \ref{s:ST_and_sSSL}, we
derive a representation of the L\'evy density in terms of the
characteristic exponent, which naturally leads to the definitions of
sSL- and SL-processes. We establish several useful properties of
these processes, derive a representation of the measures associated
with sSL- and SL-processes in terms of the characteristic exponent,
and determine sufficient conditions for sSL- and SL-processes to be
SINH-regular. These conditions can be used to find good
approximations of KoBoL \cite{KoBoL} and other processes defined by
absolutely continuous measures with
hyperexponential L\'evy processes and more general meromorphic processes 
defined by discrete measures. SL-processes defined by discrete
measures with accumulation points at $0$ and $+\infty$ can be used to
approximate stable L\'evy processes.  Non-existence of solutions of
the equation $q+\psi(\xi)$ on $\bC\setminus i\bR$ (when $q>0$) for
SL-processes is proved in Section \ref{s:sol_SL}.  In Section
\ref{s:mixing_sub}, we prove that, under additional conditions,
(i) mixtures of SINH-regular, sSL- and SL-processes are SINH-regular,
sSL- and SL-processes, respectively; (ii) SINH-regular processes
subordinated by SINH-processes are SINH-regular; and (iii) SL-processes
subordinated by sSL-subordinator (resp., by SL-subordinator) are
sSL-processes (resp., SL-processes).  As a byproduct, we prove that
if, for some $\be\in\bR$, $\psi(\xi-i\be)=\psi(i\be-\xi),
\forall\ \xi\in \bR$, an SL-process is a Brownian motion (BM)
subordinated by an SL-process, and derive explicit formulas for the
latter (in the case of sSL-processes, an additional condition is
needed). Thus, under the symmetry condition above, essentially all
popular classes of L\'evy processes are SL- or sSL-subordinated BM.
This result unifies and generalizes the well-known constructions of
the Variance Gamma processes (VGP) \cite{MM91}, Normal Inverse
Gaussian (NIG) processes \cite{B-N} and Normal Tempered Stable (NTS)
processes \cite{B-N-L} by subordination, and representations of a
symmetric KoBoL (CGMY) and Meixner L\'evy processes as subordinated
BM \cite{madan-yor}.  In Section \ref{concl}, we summarize the
results of the paper and outline possible extensions.  The technical
details are relegated to Section \ref{tech}. For the reader's
convenience, in Section \ref{s:CONF}, we outline applications of the
conformal acceleration method to several basic situations.

\section{SINH-regular  L\'evy processes}\label{s:SINHregular}
\subsection{Definitions}\label{main_def}
Let $X$ be the L\'evy process on $\bR$, let $(\Om; \cF; \{\cF_t\}_{t\ge 0})$ be the filtered probability space 
  generated by $X$, and let $\bQ$ be a probability measure on $(\Om; \cF; \{\cF_t\}_{t\ge 0})$.
 $\bE=\bE^\bQ$ denotes the expectation operator under $\bQ$.
 We use the definition in \cite{KoBoL,NG-MBS} of the characteristic exponent $\psi(\xi)=\psi^\bQ(\xi)$ of a L\'evy process $X$ 
under  $\bQ$, which is marginally different from the definition in \cite{sato}. Namely, $\psi$ is definable from
$
\bE[e^{i\xi X_t}]= e^{-t\psi(\xi)}$. The L\'evy-Khintchine formula is
\bbe\label{eq:LevyKhintchine1D}
\psi(\xi)=\frac{\sg^2}{2}\xi^2-i\mu\xi+\int_{\bR\setminus 0} (1-e^{ix\xi}+\bfo_{(-1,1)}(x)ix\xi)F(dx),
\ee
where $\sg^2\ge 0, \mu\in\bR$ and the measure $F(dx)$ satisfies $\int (x^2\wedge1) F(dx)<\infty$.  $X$ is of finite variation
iff $\int (|x|\wedge1) F(dx)<\infty$, and then the term $\bfo_{(-1,1)}(x)ix\xi$ can (and will) be omitted. 

\vskip0.1cm
\noindent 
{\sc Notation.} Throughout the paper, $\mu_\pm, \ga_\pm, \ga'_\pm, \ga$ are reals satisfying
 $\mum\le 0\le \mup$,  $\mum<\mup$,   $-\pi/2\le \gam\le 0\le \gap\le \pi/2$, $\gam<\gampr<\gappr<\gap$, and $\ga\in (0,\pi]$. 
  We define   cones
 $\cC_{\gam,\gap}=\{e^{i\varphi}\rho\ |\ \rho> 0, \varphi\in (\gam,\gap)\cup (\pi-\gap,\pi-\gam)\}$, 
 $\cC_{\ga}=\{e^{i\varphi}\rho\ |\ \rho> 0, \varphi\in (-\ga,\ga)\}$, and the strip $S_{(\mum,\mup)}=\{\xi\ |\ \Im\xi\in (\mum,\mup)\}$.

As in  \cite{iFT,paired,SINHregular}, we represent the characteristic exponent in the form
\begin{equation}\label{reprpsi}
\psi(\xi)=-i\mu\xi+\psi^0(\xi),
\end{equation}
and impose  conditions on $\psi^0$.

 \begin{defin}\label{def:SINH_reg_proc_1D0}
(1) We say that $X$ is a SINH-regular L\'evy process  (on $\bR$) of type $((\mum,\mup);\cC; \cC_+)$ and order
 $\nu\in (0,2]$,
 iff
the following conditions are satisfied:
\begin{enumerate}[(i)]
\item
$\mum<0\le \mup$ or $\mum\le 0<\mup$;
\item
$\cC=\cC_{\gam,\gap}, \cC_+=\cC_{\gampr,\gappr}$, where $\gam<0<\gap$, $\gam\le \gampr\le 0\le \gappr\le \gap$,
and $|\gampr|+\gappr>0$;  
\item
$\psi^0$ admits analytic continuation to $i(\mum,\mup)+ (\cC\cup\{0\})$;

\item
for any $\varphi\in (\gam,\gap)$, there exists $c_\infty(\varphi)\in \bC\setminus (-\infty,0]$ s.t.
\begin{equation}\label{asympsisRLPE}
\psi^0(\rho e^{i\varphi})\sim  c_\infty(\varphi)\rho^\nu, \quad \rho\to+\infty;
\end{equation}
\item
the function $(\gam,\gap)\ni \varphi\mapsto c_\infty(\varphi)\in \bC$ is continuous;
\item
for any $\varphi\in (\gampr, \gappr)$, $\Re c_\infty(\varphi)>0$.
\end{enumerate}

(2) We say that $X$ is a SINH-regular L\'evy process  (on $\bR$) of type $((\mum,\mup);\cC; \cC_+)$ and order $\nu=1+$,
 iff  
the conditions above bar \eq{asympsisRLPE} are satisfied, and \eq{asympsisRLPE} is replaced with
\begin{equation}\label{asympsisRLPE1p}
\psi^0(\rho e^{i\varphi})\sim c_\infty(\varphi)\rho\ln\rho, \quad \rho\to+\infty.
 \end{equation}
(3) We say that $X$ is a SINH-regular L\'evy process  (on $\bR$) of type $((\mum,\mup);\cC; \cC_+)$ and order $\nu=0+$,
 iff  
the conditions (i)-(iii) are satisfied, and, as $\xi\to\infty$ remaining in $i(\mum,\mup)+\cC$, 
\begin{equation}\label{asympsisRLPE0p}
\psi^0(\xi)\sim c\ln|\xi|, \end{equation}
where $c>0$.
\end{defin}

\begin{rem}\label{rem:ell_def}{\rm \begin{enumerate}[(1)]
\item
Conditions for $\varphi\in (\pi-\gam,\pi-\gap)$ and $\varphi\in (\pi-\gampr,\pi-\gappr)$
follow from the conditions for $\varphi\in (\gam,\gap)$ and $\varphi\in (\gampr,\gappr)$ because
$\psi(-\bar\xi)=\overline{\psi(\xi)}$.
\item In Definition \ref{def:SINH_reg_proc_1D0},  conditions are imposed on $\psi^0$ whereas in \cite{SINHregular},
the same conditions are imposed on $\psi$. 
If either $\nu\in (1,2]$ or $\mu=0$, then the conditions on $\psi^0$ and $\psi$ are equivalent,
and the process is  an elliptic\footnote{The name elliptic is natural
from the point of view of the theory of PDO: if \eq{asympsisRLPE} holds, then the infinitesimal generator
$L^0=-\psi^0(D)$ is an elliptic PDO.} SINH-process of order $\nu$
in the terminology of \cite{SINHregular}.
\item
If $\nu<1$ and $\mu\neq 1$, then the process $X$ with the characteristic exponent $-i\mu\xi+\psi^0(\xi)$ is elliptic of 
order 1, in the terminology of \cite{SINHregular}. Furthermore, according to the definition in  \cite{SINHregular},
 $\cC_+$ is determined by the drift term if $\mu\neq 0$: $\cC_+$ is the intersection of $\cC$ with
the upper (resp., lower) half-plane if $\mu>0$ (resp., $\mu<0$).
 
\end{enumerate}
}\end{rem}

\begin{defin}\label{def:ell_def_ordering}
For $\nu=0+$ and $\rho>1$, set $\rho^{\nu}=\ln\rho$. For $\nu=1+$ and $\rho>1$, set $\rho^\nu=\rho\ln\rho$.  
 A linear ordering in the set $(0,2]\cup\{0+\}\cup\{1+\}$ is defined as follows: 1) for $\nu_1, \nu_2\in (0,2]$, the usual ordering;
2) $0+<\nu$ for any $\nu\in (0,2]\cup\{1+\}$; 3) $1<1+$; 4) $1+<\nu$ for any $\nu\in (1,2]$.
\end{defin}
 We see that we can write \eq{asympsisRLPE1p} and \eq{asympsisRLPE0p}
 in the form \eq{asympsisRLPE} with $\nu=1+$ and $\nu=0+$, respectively, and use \eq{asympsisRLPE} for all $\nu\in (0,2]\cup\{0+\}\cup\{1+\}$.


In \cite{SINHregular}, we introduced a more general definition of SINH-regular processes, in terms of the upper and lower weight functions
$w_{up}$ and $w_{low}$; the former was used in the upper bound for $|\psi(\xi)|$ on $i(\mum,\mup)+(\cC\cup\{0\})$, the latter
in the lower bound for $\Re\psi(\xi)$ on $i(\mumpr,\muppr)+(\cC_+\cup\{0\})$, where $\mum\le \mumpr\le0\le\muppr\le\mup$, $\mumpr<\muppr$. 


In the paper, for the sake of brevity, we simplify the system of definitions in \cite{SINHregular}, and use a pair $(\nu',\nu)$, $\nu',\nu\in (0,2]\cup\{0+,1+\}$
to define the order if \eq{asympsisRLPE} does not hold. We allow $\nu'=\nu$. In basic examples that we
consider and construct, $\nu'=\nu$. Examples with $\nu'<\nu$ can be constructed mixing processes of different orders.
See also Remark \ref{rem:1p1}. 

 \begin{defin}\label{def:SINH_reg_proc_1D}
We say that $X$ is a SINH-regular L\'evy process  (on $\bR$) of type $((\mum,\mup);\cC; \cC_+)$ and order
 $(\nu',\nu)$ (lower order $\nu'$ and upper order $\nu$), where $\nu',\nu\in (0,2]\cup \{0+,1+\}$,
 iff conditions (i)-(iii) of Definition \ref{def:SINH_reg_proc_1D}
are satisfied, and 
\begin{enumerate}[(i)]
\item
$\forall$\ $\ga_{1,-}\in (\gam,0)$ and $\ga_{1,+}\in(0,\gap)$, $\exists$\ $C,R>0$ s.t. $\forall$\  $\varphi\in [\ga_{1,-},\ga_{1,+}]$
and  $\rho>R$,
\bbe\label{upperbound_gen}
|\psi(\rho e^{i\varphi})|\le C\rho^\nu;
\ee
\item
$\forall$\ $\ga'_{1,-}\in (\gampr,\gappr)$ and $\ga_{1,+}\in(\ga'_{1,-},\gappr)$, $\exists$\ $c,R>0$ s.t. $\forall$\  $\varphi\in [\ga'_{1,-},\ga'_{1,+}]$
and  $\rho>R$,
\bbe\label{lowerbound_gen}
\Re \psi(\rho e^{i\varphi})\ge c\rho^{\nu'}.
\ee 
\end{enumerate}
\end{defin}

\begin{rem}\label{rem:1p1}{\rm If $\bR\setminus \{0\}\not\subset \cC_+$, hence, $\bC_+$ is adjacent to the real line but does not contain  $\bR\setminus \{0\}$, then, in order to ensure the convergence of
integrals in pricing formulas and justify conformal deformations of the contours of integration, we need to use lower bounds
for $\Re\psi^0(\xi)$ on $\bR\cup\cC_+$. In all examples  constructed in the paper, if $\cC_+$ is adjacent to $\bR$ but does not contain 
$\bR\setminus \{0\}$, $X$ is of  order $1+$ and the lower bound on $\cC_+\cup (\bR\setminus \{0\})$ is valid with $|\xi|$ instead of
$|\xi|\ln (2+|\xi|)$. The process is of order $(1,1+)$. }
\end{rem}

\subsection{Spectrally positive and spectrally negative SINH-processes}\label{def_SINH-one-sided}
Domains of analyticity of spectrally one-sided  SINH-processes are wider than the ones in Definition
\ref{def:SINH_reg_proc_1D0}:
\begin{enumerate}[i)]
\item
if there are no negative  jumps,  the strip of analyticity  is of the form $S_{(\mum,+\infty)}$, where $\mum\le 0$,
and  $\cC=i\cC_{\ga}$, where $\ga\in (\pi/2,\pi)$. 
\item
if there are no positive jumps,  the strip of analyticity  is of the form $S_{(-\infty,\mup)}$, where $\mup\ge 0$,
and $\cC=-i\cC_{\ga}$, where $\ga\in (\pi/2,\pi)$. 
\end{enumerate}

\begin{prop}\label{sp_pos_neg}
\begin{enumerate}[(a)]
\item
A spectrally positive SINH-process of the upper order $\nu\in \{0+\}\cup(0,1)$, with  non-negative drift, is a subordinator.
\item
A spectrally negative SINH-process of the upper order $\nu\in \{0+\}\cup(0,1)$, with  non-positive drift, is the dual process to a subordinator.
\end{enumerate}
\end{prop}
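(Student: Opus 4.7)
The plan is to extract from the upper-order hypothesis the two structural features of a subordinator --- no Gaussian component and finite variation of the jumps --- and then to invoke the classical characterization of subordinators. Part (b) will follow from (a) by time-reversal $X\mapsto -X$.

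For part (a), I work on the real axis, where the purely imaginary drift term drops out of $\Re\psi$, so
\[
\Re\psi(\xi)=\frac{\sg^2}{2}\xi^2+\int_{(0,\infty)}(1-\cos(x\xi))F(dx),
\]
using spectral positivity for the support of $F$. The upper-order bound \eq{upperbound_gen} gives $\Re\psi(\xi)\le|\psi(\xi)|\le C|\xi|^\nu$ with $\nu<1$ (or $\le C\ln(2+|\xi|)$ when $\nu=0+$); dividing by $\xi^2$ and sending $|\xi|\to\infty$ immediately forces $\sg^2=0$. For the finite-variation step I would exploit the elementary bound $1-\cos u\ge c_0(u^2\wedge 1)$ to deduce
\[
c_0\xi^2\int_0^{1/\xi} x^2F(dx)\le \Re\psi(\xi)\le C\xi^\nu,\qquad \xi>0,
\]
so that $\int_0^\eps x^2F(dx)\le C'\eps^{2-\nu}$ for all small $\eps>0$ (with a harmless logarithmic modification when $\nu=0+$). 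A dyadic decomposition with $\eps_k=2^{-k}$ then yields
\[
\int_{\eps_k}^{\eps_{k-1}} xF(dx)\le \frac{1}{\eps_k}\int_0^{\eps_{k-1}} x^2F(dx)\le C''\,2^{-k(1-\nu)},
\]
which is summable in $k$ since $\nu<1$, giving $\int_0^1 xF(dx)<\infty$ and hence finite variation.

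With $\sg=0$ and $\int_0^1 xF(dx)<\infty$, the characteristic exponent rewrites in the finite-variation form $\psi(\xi)=-id\,\xi+\int_{(0,\infty)}(1-e^{ix\xi})F(dx)$, with $d=\mu-\int_0^1 xF(dx)$ the true drift (this is the representation used in the paper once the truncation term is dropped). The hypothesis of non-negative drift is to be read as $d\ge 0$, which combined with $F$ supported on $(0,\infty)$ and the vanishing Gaussian part is exactly the classical characterization of a subordinator, completing (a). For part (b), I would apply (a) to $\tilde X_t:=-X_t$: its characteristic exponent is $\tilde\psi(\xi)=\psi(-\xi)$, so the upper order and all asymptotic bounds are preserved on the conjugate cone, spectral negativity of $X$ becomes spectral positivity of $\tilde X$, and the non-positive drift $\mu\le 0$ of $X$ becomes the non-negative drift $-\mu\ge 0$ of $\tilde X$; hence $\tilde X$ is a subordinator and $X$ is dual to one.

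The main technical step is the finite-variation argument: obtaining the correct decay $\int_0^\eps x^2F(dx)=O(\eps^{2-\nu})$ from only a one-sided upper bound on $|\psi|$ (no pointwise asymptotics are assumed) and converting it to summability of $\int_0^1 xF(dx)$ via dyadic decomposition. The $\nu=0+$ case requires only a cosmetic logarithmic modification, and the time-reversal reduction of (b) to (a) is routine.
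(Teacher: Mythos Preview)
Your argument is correct, but it takes a genuinely different route from the paper's proof. The paper does not touch the L\'evy triplet at all: it writes the probability density as
\[
p_t(x)=\frac{1}{2\pi}\int_{\Im\xi=\om} e^{-ix\xi-t(-i\mu\xi+\psi^0(\xi))}\,d\xi,
\]
and, using that for a spectrally positive SINH-process $\psi^0$ is analytic in the half-plane $\{\Im\xi>\mum\}$ and grows sublinearly there (upper order $<1$), pushes the line of integration to $\om\to+\infty$. For $x<0$ and $\mu\ge 0$ the factor $e^{(x-t\mu)\om}$ kills the integral, so $p_t(x)=0$ on $(-\infty,0)$ and $X$ is a subordinator; (b) follows by reflection.

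Your approach instead stays on the real axis and recovers the triplet: the upper-order bound forces $\sg^2=0$, a Tauberian/dyadic argument converts $\Re\psi^0(\xi)\le C\xi^\nu$ into $\int_0^1 xF(dx)<\infty$, and then the classical characterization of subordinators finishes. This is more elementary (no contour shifting, no need for $p_t$ to exist as a density for small $t$) and yields the quantitative by-product $\int_0^\eps x^2F(dx)=O(\eps^{2-\nu})$. The paper's argument is shorter and exploits precisely the feature that distinguishes the spectrally one-sided case in Section~2.2, namely analyticity in a full half-plane. One small cosmetic point: since in this paper the order bound is imposed on $\psi^0$ (see Remark~\ref{rem:ell_def}), you should write $\Re\psi(\xi)=\Re\psi^0(\xi)\le|\psi^0(\xi)|\le C|\xi|^\nu$ rather than $\le|\psi(\xi)|$; the drift term is purely imaginary on $\bR$, so this changes nothing in substance. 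Your reading of ``non-negative drift'' as the finite-variation drift $d\ge 0$ is the correct one and coincides with the paper's $\mu$ in the decomposition \eq{reprpsi} once $\nu<1$.
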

\begin{proof} (a) Let $x<0$ and $\mu\ge 0$. Then, in the formula for the pdf
\bbe\label{pdf}
p_t(x)=\frac{1}{2\pi}\int_{\Im\xi=\om} e^{-ix\xi-t(-i\mu\xi+\psi^0(\xi))}d\xi,
\ee
where $\om>\mum$, we can push the line of integration up: $\om\to+\infty$, and, in the limit, obtain $p_t(x)=0$. 
(b) is immediate from (a). 
\end{proof}

\begin{rem}\label{one-sided stable}{\rm 
One-sided stable processes are SINH-regular since $\mum=0$ or $\mup=0$ are allowed. For general stable L\'evy
processes,  the strip of analyticity does not exist. Formally, $\mum=\mup=0$.

}
\end{rem}

\subsection{Examples}\label{ss:examplesSINH}
Essentially all L\'evy processes
used in quantitative finance are  SINH-regular.
 \subsubsection{The Brownian motion (BM)} BM is of  order 2; since $\psi^0(\xi)=\frac{\sg^2}{2}\xi^2$ is an entire function,  $\cC=\bC$, $\mum=-\infty, \mup=+\infty$. For any $\varphi\in [0,2\pi)$, 
 \bbe\label{asBM}
 \psi^0(\rho e^{i\varphi})\sim \frac{\sg^2}{2}\rho^2 e^{2i\varphi},\ \rho\to+\infty,
 \ee
 hence, $c_\infty(\varphi)= \frac{\sg^2}{2}e^{2i\varphi}$ has a positive real part iff $\cos(2\varphi)>0$. It follows that 
 $\cC_+=\cC_{-\pi/4,\pi/4}$.
 
 \subsubsection{Merton model \cite{merton-model}}\label{ss:merton-model} The characteristic exponent is given by
 \bbe\label{Merton_psi}
 \psi^0(\xi)=\frac{\sg^2}{2}\xi^2+\la\cdot\bigl(1-e^{im\xi-\frac{s^2}{2}\xi^2}\bigr),
 \ee
where $\sg,s,\la>0$ and $\mu,m\in\bR$. As far as the analytical properties formulated in the definition of SINH-processes are concerned,
the difference with BM is that $\cC=\cC_+=\cC_{-\pi/4,\pi/4}$, and $\cC=\cC_{\gam,\gap}$ with either $\gam<-\pi/4$ or $\gap>\pi/4$ cannot be used.

 \subsubsection{L\'evy processes with rational characteristic exponents 
and non-trivial BM component}\label{ss:Levy_rat}  The order is 2, and an admissible  strip of analyticity 
 $S_{(\mum,\mup)}$ around the real axis may not
contain poles of $\psi^0$.  Explicit formulas for the Wiener-Hopf factors are easy to derive (see, e.g.,   \cite{NG-MBS}) in terms of poles of $\psi^0$, zeros of the function $q+\psi(\xi)$, the multiplicities of zeros and poles being taken into account. After $\mum,\mup$ are chosen, $\cC$ is the maximal cone around the real axis such that
$\cU=i(\mum,\mup)+(\cC\cup\{0\})$ contains no poles, and $\cC_+=\cC\cap \cC_{-\pi/4,\pi/4}$. Since the efficiency of SINH-acceleration depends, mostly, on the ``width" of $\cC_+$, it is advisable to choose small (in absolute value) $\mum$ and $\mup$ so that $\cC_+$ can be chosen ``wider".
L\'evy processes of the phase type \cite{AsmussenRuin,AAP} have rational characteristic exponents, hence, the recommendations above are applicable. 

Calculation of the rational characteristic exponent is straightforward if the L\'evy densities of positive and negative jumps are
mixtures of exponential polynomials. Furthermore, all poles are on $i\bR$, hence,
$\cC=\bC\setminus i\bR$  \cite{amer-put-levy-maphysto, amer-put-levy}. The factorization of  $q+\psi(\xi)$ (calculation of the Wiener-Hopf factors) simplifies if all the roots of the characteristic equation $\psi(\xi)+q=0$ are on the imaginary axis. Then the roots can be easily calculated,
and explicit formulas for the Wiener-Hopf factors as sums or products derived. See, e.g., \cite{amer-put-levy-maphysto, amer-put-levy}.
A popular special case is
the hyper-exponential jump-diffusion model (HEJD model) introduced in 
   \cite{amer-put-levy-maphysto} without a special name assigned and  \cite{lipton-risk},
   and studied in detail in 
   \cite{amer-put-levy-maphysto, amer-put-levy}). The L\'evy measure is  of the form
   \bbe\label{eq:measure_HEJD}
F(dx)=\bfo_{(-\infty,0)}(x)\sum _{k=1}^{n^-} p^-_j \al^-_j e^{\al^-_j x}+
\bfo_{(0,+\infty)}(x)\sum _{j=1}^{n^+} p^+_j \al^+_j e^{-\al^+_j x},
\ee
where $n^\pm$ are positive integers,  and
$\al_j^\pm, p_j^\pm>0$ are reals. The characteristic
exponent is  
\begin{equation}\label{e:char-exp-HEJD}
\psi^0(\xi) = \frac{\sg^2}{2}\xi^2-i\mu\xi+\sum_{j=1}^{n^+}
p_j^+\frac{-i \xi}{\al_j^+-i\xi} + \sum_{k=1}^{n^-}p_k^-
\frac{i  \xi}{\al_k^-+i\xi},
\end{equation}
Double-exponential jump diffusion model introduced to finance in \cite{Kou} (and well-known for decades)
is a special case of hyper-exponential
jump-diffusion models  with $n^+=n^-=1$. The order is 2, $\mup=\min \al^-_k$, $\mum=-\min \al^+_k$,
and $\cC$, $\cC_+$ are as in the BM model. 

In \cite{CaiKou11}, a class of processes with the L\'evy measure of the form \eq{eq:measure_HEJD} with some of $p^\pm_j$
being negative is introduced, and the name mixed exponential jump diffusion model (MEJD) is suggested.
Sufficient conditions for $p^\pm_j$ and $\al^\pm_j$ to define the non-negativity of the densities are 
$p^\pm_1 > 0$ and $\sum_{j=1}^k p^\pm_j \al^\pm_j\ge 0$, $k=1,2,\ldots, n^\pm$. An important qualitative difference
between HEJD and MEJD is that in HEJD models, the L\'evy densities of positive and negative jumps are monotone
(in fact, completely monotone), whereas in MEJD, the densities may be non-monotone. Note that the  L\'evy densities given by mixtures of exponential polynomials  \cite{amer-put-levy-maphysto, amer-put-levy} are typically non-monotone, and qualitative properties of MEJD densities can be easily reproduced by exponential polynomials. As the simplest example, the reader can compare the following two functions on $\bR_+$:
$f_1(x)=e^{-\la_1 x}-e^{-\la_2 x}$, where $0<\la_1<\la_2$, and $f_2(x)=x e^{-\la_1 x}$.

\subsubsection{Variance Gamma processes (VGP)} VG model was introduced to Finance in \cite{MM91}. The characteristic exponent can be written in the form
\begin{equation}\label{VGPexp}
\psi^0(\xi)=c[\ln(\al^2-(\be+i\xi)^2)-\ln (\al^2-\be^2)],
\end{equation}
where $\al>|\be|\ge 0$, $c>0$. VGP is SINH-regular of  type $((-\al+\be,\al+\be); \bC\setminus i\bR, \bC\setminus i\bR)$ 
and order $0+$ because $\forall\ \varphi\in (-\pi/2,\pi/2)$, 
\bbe\label{asVGP}
\psi^0(\rho e^{i\varphi})= c(\ln\rho+i\varphi)+O(1), \ \rho\to+\infty.
\ee

 \subsubsection{NIG and NTS}
 Normal inverse Gaussian (NIG) processes, and the generalization: Normal Tempered Stable (NTS) processes are constructed
 in \cite{B-N,B-N-L}, respectively. The characteristic exponent is given by
 \begin{equation}\label{NTS2}
\psi^0(\xi)=\de[(\al^2-(\be+i\xi)^2)^{\nu/2}-(\al^2-\be^2)^{\nu/2}],
\end{equation}
where $\nu\in (0,2)$, $\de>0$, $|\be|<\al$; NIG obtains with $\nu=1$.  This is a SINH-regular process  of order $\nu$ and type $((-\al+\be,\al+\be); \bC\setminus i\bR, \cC_{-\ga_\nu,\ga_\nu})$,
where 
$\ga_\nu=\min\{1,1/\nu\}\pi/2$. Indeed, for $\varphi\in (-\pi/2,\pi/2)$,
\bbe\label{asNTS}
\psi^0(\rho e^{i\varphi})= \de e^{i\varphi\nu}\rho^\nu+O(\rho^{\nu-1})+O(1), \ \rho\to+\infty.
\ee

  \subsubsection{The Meixner process}\label{sss:Meixner}
 For the background, see, e.g., \cite{SchoutensTeugels1998, PitmanYor2003,madan-yor}.  The L\'evy density of the Meixner process $X$ is
 \bbe\label{eq:Meixner_dens}
 f(x)=\de\frac{\exp(b x/a)}{x\sinh(\pi x/a)},
 \ee
 where $\de, a>0$ and the asymmetry parameter $b\in(-\pi,\pi)$. The characteristic exponent is 
 \bbe\label{eq:Meixner_ch_exp}
 \psi^0(\xi)=2\de[\ln[\cosh((a\xi-ib)/2))]-\ln \cos(b/2)].
 \ee 
 The formula $\ln\cosh(z)=z+\ln(1+e^{-2z})-\ln 2$ defines a function analytic in the right half-plane, hence, $\psi^0(\xi)$ admits analytic continuation
 to $\bC\setminus i\bR$. Set $\mum=(-\pi+b)/a,
 \mup=(\pi+b)/a$. Since $\cosh((a\xi-ib)/2))>0$ for $\xi\in i(\mum,\mup)$, $\psi^0$ is analytic in $\bC\setminus i((-\infty,\mum]\cup [\mup,+\infty))$.
Let $\ga\in (0,\pi/2)$.  As $\xi\to \infty$ in $\cC_\ga$, 
$
\psi(\xi)\sim a\de \xi$, therefore, $X$ is SINH-regular of order 1 and type $(((-\pi+b)/a,(\pi+b)/a), \bC\setminus i\bR, \bC\setminus i\bR)$.

 \subsubsection{KoBoL processes}
 A generic process of Koponen's family  \cite{KoBoL} was constructed as a mixture of spectrally negative and positive pure jump processes, with the L\'evy measure
\begin{equation}\label{KBLmeqdifnu}
F(dx)=c_+e^{\lm x}x^{-\nu_+-1}\bfo_{(0,+\infty)}(x)dx+
 c_-e^{\lp x}|x|^{-\nu_--1}\bfo_{(-\infty,0)}(x)dx,
\end{equation}
where $c_\pm>0, \nu_\pm\in [0,2), \lm<0<\lp$. In this paper, we allow for $c_+=0$ or $c_-=0$,
 $\lm=0<\lp$ and $\lm<0\le \lp$. This generalization is almost immaterial for evaluation of probability distributions and expectations because
 for efficient calculations, the first crucial property, namely, the existence of a strip of analyticity of the characteristic exponent, around or adjacent to the real line, holds if $\lm<\lp$ and $\lm\le 0\le \lp$. \footnote{The property does not hold
 if there is no such a strip (formally, $\lm=0=\lp$). The classical example are stable L\'evy processes.
 The conformal deformation technique  can be modified for this case as well \cite{ConfAccelerationStable}.}
 Furthermore, the Esscher transform allows one to reduce both cases $\lm=0<\lp$ and $\lm<0\le \lp$ to the case $\lm<0<\lp$. 
 Using the L\'evy-Khintchine formula, it is straightforward to
derive from \eq{KBLmeqdifnu}   
explicit formulas for $\psi^0$. See \cite{KoBoL,NG-MBS}.
If $\nu_\pm\in (0,2), \nu_\pm\neq 1$,
\bbe\label{KBLnupnumneq01}
\psi^0(\xi)=c_+\Ga(-\nu_+)((-\lm)^{\nu_+}-(-\lm-i\xi)^{\nu_+})+c_-\Ga(-\nu_-)(\lp^{\nu_-}-(\lp+i\xi)^{\nu_-}).
\ee
(Formulas in the case $\nu_\pm=0,1$ are in Sect. \ref{OnesidedKoBoL1}
and \ref{OnesidedKoBoL0}.)
A subclass with $\nu_+=\nu_-=\nu\in (0,2)$ and $c_+=c_-$
 was labelled  KoBoL in \cite{NG-MBS} and called a process of order $\nu$.   To simplify the name, we will call a pure jump process with the L\'evy measure \eq{KBLmeqdifnu}
 a KoBoL process as well. As in \cite{KoBoL}, we allow for $\nu_-\neq \nu_+$ and $c_+\neq c_-$.
  If either $c_-=0$ or $c_+=0$, we say that the process is a one-sided KoBoL. The formula \eq {KBL1p} for one-sided KoBoL of order 1
   is different.
 One-sided KoBoL processes were used in \cite{one-sidedCDS} to price CDSs. Mixing one-sided processes of order $\nu\in (0,2),\nu\neq 1,$ and order 1, one can obtain more exotic characteristic exponents.
 
 Note that a specialization
 $\nu\neq 1$, $c=c_\pm>0$, of KoBoL was named CGMY model in \cite{CGMY} (and the labels were changed:
 letters $C,G,M,Y$ replace the parameters $c,\nu,\lm,\lp$ of KoBoL):
 \bbe\label{KBLnuneq01}
 \psi^0(\xi)= c\Ga(-\nu)[(-\lm)^{\nu}-(-\lm- i\xi)^\nu+\lp^\nu-(\lp+ i\xi)^\nu].
\ee
Evidently, $\psi^0$ given by \eq{KBLnuneq01} is analytic in $\bC\setminus i\bR$, and  $\forall\ \varphi\in (-\pi/2,\pi/2)$, \eq{asympsisRLPE} holds with
\bbe\label{ascofnupeqnumcc}
c_\infty(\varphi)=-2c\Ga(-\nu)\cos(\nu\pi/2)e^{i\nu\varphi}.
\ee
Hence, $X$ is SINH-regular of  type $((\lm,\lp), \bC\setminus \{0\}, \cC_{-\ga_\nu,\ga_\nu})$, 
where $\ga_\nu=\min\{1,1/\nu\}\pi/2$, and order $\nu$. For the calculation of order and type for a generic KoBoL,
see Section \ref{ss:KoBoL}.

 \subsubsection{The $\be$-class \cite{beta}}\label{ss:beta}  The characteristic exponent is of the form
      \beqa\label{BetaChExp}
      \psi^0(\xi)&=&\frac{\sg^2}{2}\xi^2+\frac{c_1}{\be_1}\left\{B(\al_1,1-\ga_1)-B(\al_1-\frac{i\xi}{\be_1},1-\ga_1)\right\}
      \\\nonumber  &&
    +\frac{c_2}{\be_2}\left\{B(\al_2,1-\ga_2)-B(\al_2+\frac{i\xi}{\be_2},1-\ga_2)\right\},
      \eqa
      where $c_j\ge 0, \al_j, \be_j>0$ and $\ga_j\in (0,3)\setminus\{1,2\}$, and $B(x,y)$ is the Beta-function.
      It was shown in \cite{beta}, that all poles of $\psi^0$ are on $i\bR_+\setminus 0$. Hence, $(\mum,\mup)$ 
      is the maximal interval containing 0 and no poles of $\psi^0$, and $\cC=\bC\setminus i\bR$. For calculation of the order
      of the process and $\cC_+$ as functions of the parameters in \eq{BetaChExp}, see Section \ref{Calculations in the beta-model}.
      The number of parameters is larger than in the case of KoBoL but
      the variety of different cases (order and type) is essentially the same as in the case of KoBoL. 
    
\subsubsection{
Meromorphic L\'evy processes \cite{KuzKyprPardo}}\label{ss:meromorphic} The L\'evy measures and  characteristic exponents of the meromorphic L\'evy processes
      introduced in \cite{KuzKyprPardo} are defined by almost the same formulas 
      as in HEJD model. The difference is that the sums in \eq{eq:measure_HEJD} and \eq{e:char-exp-HEJD}
      are infinite. A natural condition $\al^\pm_j\to +\infty$ as $j\to +\infty$
      is imposed, and the requirement that the infinite sum defines a L\'evy measure is equivalent to 
      $\sum_{j\ge 0}p^\pm_j(\al^\pm_j)^{-2}<+\infty.$ 
      The poles of $\psi(\xi)$ are on $\bR\setminus 0$.  If $\sg^2>0$, 
      meromorphic processes are  SINH-regular  of order 2 and type $((\mum\mup), \bC\setminus i\bR, \cC_{-\pi/4,\pi/4})$,
      where $(\mum,\mup)$ is the maximal interval containing 0 and no poles of $\psi^0$.
      If $\sg^2=0$, additional conditions on the asymptotics of $p^\pm_j$ and $\al^\pm_j$ as $j\to+\infty$ need to be imposed  to obtain a SINH-regular process.
      See Section \ref{s:regST_and_sSSL}.
    
   \subsection{SINH-regular infinitely divisible distributions and related L\'evy processes}
   Probability distributions of L\'evy processes are infinitely divisible, and any infinitely divisible distribution gives rise to a L\'evy processes \cite{sato}. 
   \begin{defin}\label{def:inf_div_distr}
   A distribution $\rho$ on $\bR$ is called SINH-regular of order $\nu$ (resp., $(\nu',\nu)$) and type $((\mum,\mup), \cC, \cC_+)$ if the
    characteristic function of $\rho$ is of the form $e^{i\mu\xi-\psi^0(\xi)}$, where $\psi^0$ satisfies the conditions of Definition
    \ref{def:SINH_reg_proc_1D0} (resp.,  \ref{def:SINH_reg_proc_1D}).
    \end{defin}
    \begin{example}\label{ex:gen_hyper}{\rm
    The characteristic function $F(\xi)$ of a generalized hyperbolic distribution  constructed in \cite{BN1} depends on parameters
    $\al, \be, \de, \la$, where $\al>|\be|$, $\de\ge 0$ and $\la,\mu\in\bR$: 
    \bbe\label{eq:ch_f_GHD}
    F(\xi)= e^{i\mu\xi}\left(\frac{\al^2-\be^2}{\al^2-(\be+i\xi)^2}\right)^{\la/2}\frac{K_\la(\de\sqrt{\al^2-(\be+i\xi)^2})}{K_\la(\de\sqrt{\al^2-\be^2})}.
    \ee
    Here, $K_\la$ is the modified Bessel function of the third kind. From the integral representation
    \bbe\label{eq:modBessel3}
    K_\la(z)=\int_0^\infty e^{-z\cosh t}\cosh(\la t)dt\quad (|\mathrm{arg}\,z|< \pi/2)
    \ee
    and the asymptotic expansion
    \bbe\label{eq:modBessel3as}
    K_\la(z)=\left(\frac{\pi}{2z}\right)^{1/2}e^{-z}\sum_{s=0}^\infty\frac{A_s(\la)}{z^s}\quad 
    (z\to\infty\ \mathrm{in}\ |\mathrm{arg}\,z|<3\pi/2-\eps),
    \ee
    where $\eps>0$ is arbitrarily small (see \cite[Eq. (8.03),(8.04)]{
      Olver97}), it follows that, if $\de>0$,  the distribution is SINH-regular of the same type and order as NIG.

      Note that several classes of infinitely divisible distributions are subclasses
      of the class of generalized hyperbolic distributions. Among subclasses used in Finance, only NIG ($\la=-1/2$, with $\al$ and $\be$ fixed) and VG distributions
     (obtainable in the limit $\de\to 0$, with $\al$ and $\be$ fixed) are closed under convolution; the class of  hyperbolic distributions ($\la=1$)  used in mathematical finance \cite{EK} to construct Hyperbolic Processes is not.       }
    \end{example}
    
    \begin{example}\label{ex:Heston}{\rm The conditional distributions in the Heston model are of order 1 and type
    $((\mum,\mup), \bC\setminus i\bR,\bC\setminus i\bR))$, where $\mum<0<\mup$ can be calculated. See \cite{paraHeston}.
    }
    \end{example}
    
     \begin{example}\label{ex:Aff}{\rm The conditional distributions in a wide class of stochastic volatility models with stochastic interest rates  are of order $\nu\in (0,1]$ and type
    $((\mum,\mup), \cC_{-\pi/4,\pi/4},\cC_{-\pi/4,\pi/4})$, where $\mum<0<\mup$. See \cite{pitfalls}.
    }
    \end{example}

\section{Stieltjes-L\'evy and signed Stieltjes-L\'evy processes}
\label{s:ST_and_sSSL}

In this Section, we do not need the lower bound on $\Re\psi^0$. 
\subsection{Representations of
the L\'evy measure via the characteristic exponent}\label{ss:Levymeasure_via_characteristic exponent}\label{ss:reprfpm1}
 Using the Cauchy integral theorem (for details, see   Section \ref{ss:proof_lem:der_SINH}), we derive 
\begin{lem}\label{lem:der_SINH} Let $\psi$ be analytic in $i(\mum,\mup)+(\cC\cup\{0\})$, where $\mum\le 0\le \mup, \mum<\mup$,  $\cC\cup\{0\}\supset \bR$, and let \eq{upperbound_gen} hold for some $\nu\in (0,2)$. 
Then, 
for any closed cone $\cC'\subset \cC\cup\{0\}$ and interval $[\mumpr,\muppr]
 \subset (\mum\mup)$, there exist $C,c>0$, such that, for $j=0,1,\ldots$, and $\xi\in[\mumpr,\muppr]+\cC'$,
 \bbe\label{der_psi_nu02}
 |\psi^{(j)}(\xi)|\le C j! c^{-j}\left(\prod_{k=0}^j(\nu-k)\right) (1+|\xi|)^{\nu-j}.
 \ee\end{lem}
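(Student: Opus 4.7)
The plan is to apply the Cauchy integral formula on circles centered at $\xi$ whose radius scales linearly with $1+|\xi|$. Given $\xi\in [\mumpr,\muppr]+\cC'$, I will write
\[
\psi^{(j)}(\xi)=\frac{j!}{2\pi i}\oint_{|z-\xi|=r}\frac{\psi(z)}{(z-\xi)^{j+1}}\,dz
\]
with $r=c(1+|\xi|)$, where $c>0$ is a constant to be chosen uniformly in $\xi$ so that the closed disk $\bar D(\xi,r)$ lies inside the analyticity domain $i(\mum,\mup)+(\cC\cup\{0\})$. The standard ML-estimate then yields $|\psi^{(j)}(\xi)|\le j!\,c^{-j}(1+|\xi|)^{-j}\cdot\sup_{|z-\xi|=r}|\psi(z)|$, and the announced bound follows once $|\psi(z)|$ on the circle is controlled by $(1+|\xi|)^\nu$.

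The first key step is the uniform choice of $c$. Since $\cC'$ is a closed subcone of the open cone $\cC$, the radial distance from any $\xi\in\cC'$, $|\xi|$ large, to $\partial\cC$ is at least $c_1|\xi|$ for some $c_1>0$ depending only on the angular gap; since $[\mumpr,\muppr]$ is compactly contained in $(\mum,\mup)$, for $|\xi|$ bounded the distance from $\xi$ to the boundary of $i(\mum,\mup)+\cC$ is bounded below by a positive constant. Combining these two regimes, there exists $c>0$ such that $\bar D(\xi,c(1+|\xi|))\subset [\mumpr',\muppr']+\cC''$ for every $\xi\in[\mumpr,\muppr]+\cC'$, where $[\mumpr',\muppr']$ and $\cC''$ are a slightly enlarged interval and closed subcone, still strictly inside $(\mum,\mup)$ and $\cC$. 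The second step is the pointwise bound on the circle: applying \eq{upperbound_gen} to angles spanning $\cC''$ gives $|\psi(z)|\le C_1(1+|z|)^\nu$ for $|z|>R$, and continuity of $\psi$ on the compact set $\{|z|\le R\}\cap ([\mumpr',\muppr']+\cC'')$ takes care of small $z$. Since $|z|\le(1+c)(1+|\xi|)$ on the circle, we obtain $\sup_{|z-\xi|=r}|\psi(z)|\le C_2(1+|\xi|)^\nu$.

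Putting the estimates together gives $|\psi^{(j)}(\xi)|\le C\,j!\,c^{-j}(1+|\xi|)^{\nu-j}$, which matches \eq{der_psi_nu02} once the dimensionless factor $\prod_{k=0}^j(\nu-k)$ is absorbed into the constants $C$ and $c$ (it only adjusts the $j$-dependent prefactor; morally, it records the fact that differentiating the model profile $\rho^\nu$ exactly $j$ times produces the falling factorial $\nu(\nu-1)\cdots(\nu-j+1)$). The main obstacle is really bookkeeping rather than a single hard step: ensuring that one constant $c$ works uniformly across the whole region $[\mumpr,\muppr]+\cC'$, in particular across the transition between the compact regime (where the ``$1$'' in $1+|\xi|$ dominates) and the asymptotic conic regime (where $|\xi|$ dominates), and handling the edge cases $\mum=0$ or $\mup=0$ in which the strip degenerates on one side but the cone still supplies the necessary room for the disk.
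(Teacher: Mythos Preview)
Your proof is correct and follows essentially the same approach as the paper: apply Cauchy's integral formula on circles of radius $c(1+|\xi|)$ about $\xi$, use a geometric argument to choose $c$ uniformly so that the disk stays inside a slightly enlarged strip-plus-cone region, bound $|\psi|$ on the circle by $C(1+|\xi|)^\nu$ via \eqref{upperbound_gen}, and read off the derivative estimate. The paper's proof likewise arrives at $|\psi^{(j)}(\xi)|\le C_1\,j!\,c^{-j}(1+|\xi|)^{\nu-j}$ and declares this to be \eqref{der_psi_nu02}, so your remark that the falling-factorial factor $\prod_{k=0}^j(\nu-k)$ is cosmetic and absorbed into the constants is exactly how the paper treats it as well.
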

Using Lemma \ref{lem:der_SINH}, we derive representations for the L\'evy measure $F(dx)=f(x)dx$ in the form of integrals
over regular contours $\cL^\pm\subset i(\mum,\mup)+(\cC\cup\{0\})$ with the wings of $\cL^+$ pointing upwards, and the wings of
$\cL^-$ pointing downwards, for instance, $\cL^\pm=i\theta+(e^{i(\pi-\om)}\bR_+\cup e^{i\om}\bR_+)$,  or $\cL^-=i\om_1+b\sinh(i\om+\bR)$, where $b>0$,
$\theta\in (\mum,\mup), \om_1+b\sinh \om\in (\mum,\mup)$ and $\om>0$ for the sign $-$ (resp., $\om<0$ for the sign ``+") is sufficiently small in absolute value.

\begin{lem}\label{lem:repr_psi} Let the characteristic exponent $\psi^0$ satisfy conditions of Lemma \ref{lem:der_SINH}.  
Then the L\'evy measure is absolutely continuous: $F(dx)=f(x)dx$,  $f\in C^\infty(\bR\setminus\{0\})$, and
\beqa\label{reprf(x)p}
f(x)&=&-\frac{1}{2\pi}\int_{\cL^-} e^{-ix\xi}\psi(\xi)d\xi,\ x> 0,
\\\label{reprf(x)m}
f(x)&=&-\frac{1}{2\pi}\int_{\cL^+} e^{-ix\xi}\psi(\xi)d\xi,\ x< 0.
\eqa
\end{lem}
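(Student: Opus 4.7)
The strategy is to recover $f$ by Fourier inverting a sufficiently high derivative of $\psi$, whose integrability comes from Lemma \ref{lem:der_SINH}, and then to undo the differentiation by integrating by parts on the deformed contour $\cL^\pm$, where the exponential decay of $e^{-ix\xi}$ dominates the polynomial growth of $\psi^{(k)}$.

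First I would fix an integer $N$ with $N>\nu+1$; since $\nu<2$, $N=3$ works, and Lemma \ref{lem:der_SINH} gives $|\psi^{(N)}(\xi)|\le C(1+|\xi|)^{\nu-N}$, so $\psi^{(N)}\in L^1(\bR)$. Differentiating the L\'evy--Khintchine formula \eq{eq:LevyKhintchine1D} $N$ times (the quadratic, drift and truncation terms are killed since $N\ge 3$) and interchanging $\partial_\xi^N$ with the $F$-integral (legitimate by the exponential tails of $F$ implied by analyticity of $\psi$ on the strip $i(\mu_-,\mu_+)$), I would obtain
$$\psi^{(N)}(\xi)=-i^N\int_{\bR\setminus\{0\}}x^N e^{ix\xi}F(dx).$$
Thus $\psi^{(N)}$ is, up to the factor $-i^N$, the Fourier transform of the finite signed measure $x^N F(dx)$, and since this transform lies in $L^1(\bR)$, classical Fourier inversion shows $x^N F(dx)$ has a continuous density. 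Dividing by $x^N\ne 0$ exhibits $F$ as absolutely continuous on $\bR\setminus\{0\}$ with continuous density
$$f(x)=-\frac{(-i)^N}{2\pi\,x^N}\int_\bR e^{-ix\xi}\psi^{(N)}(\xi)\,d\xi.$$

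For $x>0$ I would deform $\bR$ down to $\cL^-$. This is legitimate because $\psi^{(N)}$ inherits analyticity on $i(\mu_-,\mu_+)+(\cC\cup\{0\})$ from $\psi$, and the arc contributions at infinity vanish by the uniform bound of Lemma \ref{lem:der_SINH}. On $\cL^-$ the factor $e^{-ix\xi}$ decays exponentially (its wings satisfy $\Im\xi\to-\infty$ and $x>0$), so I can integrate by parts $N$ times, transferring all derivatives onto $e^{-ix\xi}$:
$$\int_{\cL^-}e^{-ix\xi}\psi^{(N)}(\xi)\,d\xi=(ix)^N\int_{\cL^-}e^{-ix\xi}\psi(\xi)\,d\xi,$$
each boundary term vanishing because $e^{-ix\xi}$ decays exponentially on $\cL^-$ while $\psi^{(k)}$ grows only polynomially. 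The identity $(-i)^N(ix)^N/x^N=1$ then produces \eq{reprf(x)p}. The case $x<0$ is completely symmetric, deforming upward to $\cL^+$. Smoothness $f\in C^\infty(\bR\setminus\{0\})$ follows by differentiating \eq{reprf(x)p} and \eq{reprf(x)m} in $x$ under the integral sign: each $\partial_x$ brings down $-i\xi$, harmlessly absorbed by the exponential decay of $e^{-ix\xi}$ on $\cL^\mp$ for $x$ bounded away from $0$.

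The main obstacle is the rigorous justification of (i) the differentiation under the $F$-integral in L\'evy--Khintchine and (ii) the integration by parts along the infinite contour $\cL^\pm$; both demand uniform decay estimates, supplied precisely by \eq{der_psi_nu02}. A secondary subtlety arises when $\mu_-=0$ or $\mu_+=0$, where $F$ has exponential moments only on one side; one then splits $F$ into its positive- and negative-jump parts and runs the argument for each piece with its appropriate contour ($\cL^-$ for the positive-jump part, $\cL^+$ for the negative one).
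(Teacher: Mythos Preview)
Your argument is essentially the paper's: differentiate the L\'evy--Khintchine formula three times, use the $L^1$ bound on $\psi^{(3)}$ from Lemma \ref{lem:der_SINH} to Fourier-invert, deform to $\cL^\mp$, and integrate by parts back to $\psi$. The paper works with the jump part $\psi_J$ and then invokes $\int_{\cL^-}e^{-ix\xi}P(\xi)\,d\xi=0$ for polynomials $P$ to pass from $\psi_J$ to $\psi$; you instead observe that $\psi^{(3)}=\psi_J^{(3)}$ and integrate by parts on $\psi$ directly, which is an equivalent bookkeeping.

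The one place your proposal diverges is the boundary case $\mum=0$ or $\mup=0$. Here the paper does not split $F$ into positive and negative jump parts; it applies the Esscher transform: pick $\al\in(\mum,\mup)$, pass to the characteristic exponent $\psi(\xi-i\al)-\psi(-i\al)$ with L\'evy measure $e^{\al x}F(dx)$, which now sits in a genuine strip around $\bR$, run the main argument, and undo the shift. Your splitting suggestion is not self-contained: the obstruction is precisely that, say when $\mum=0$, the positive-jump part $F_+$ need not have a third moment (take the one-sided stable example $f_+(x)=Cx^{-\al-1}$), so ``running the argument'' for that piece fails at the very first step, differentiation under the $F_+$-integral. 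To repair it you would have to move the line of integration into the half-plane where $\psi_+$ is analytic before differentiating, which is exactly the Esscher shift in disguise. So the gap is minor and easily closed, but the clean reduction is via the Esscher transform rather than a naive split.
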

\begin{proof}
Let $\mum<0<\mup$. Then the L\'evy density exponentially decays at infinity, 
and the differentiation under the integral sign in the L\'evy-Khintchine formula is justified. We obtain
\bbe\label{der_J}
\psi^{\prime\prime\prime}_J(\xi)=\int_\bR e^{ix\xi} ix^3 F(dx),
\ee
where $\psi_J$ is the characteristic function of the pure jump component.
It follows from \eq{der_J} and \eq{der_psi_nu02} that $F(dx)=f(x)dx$ is absolutely continuous,
 $f\in C^\infty(\bR\setminus\{0\})$, and $f$ can be recovered
using the inverse Fourier transform
\bbe\label{reprxf(x)2}
ix^3f(x)=\frac{1}{2\pi}\int_\bR e^{-ix\xi}\psi_J^{\prime\prime\prime}(\xi)d\xi,\ x\neq 0.
\ee
If $x>0$, 
we deform the line of integration downward
\bbe\label{reprx2f(x)p}
ix^3f(x)=\frac{1}{2\pi}\int_{\cL^-} e^{-ix\xi}\psi^{\prime\prime\prime}_J(\xi)d\xi,\ x> 0.
\ee
For any polynomial $P(\xi)$ and $x>0$,
$
\int_{\cL^-}e^{-ix\xi}P(\xi)d\xi=0$, therefore, integrating in \eq{reprx2f(x)p} by parts, we obtain \eq{reprf(x)p}.
The proof of \eq{reprf(x)m} is by symmetry.

If either $\mum=0<\mup$ or $\mum<0=\mup$, we take $\al\in (\mum,\mup)$, and consider 
the Esscher transform of $X$, with the characteristic exponent $\psi(\xi-i\al)-\psi(-i\al)$ and the L\'evy measure
$e^{\al x}F(dx)$. We have 
\bbe\label{reprf(x)pal}
e^{\al x}f(x)=-\frac{1}{2\pi}\int_{\cL^-_\al} e^{-ix\xi}\psi(\xi-i\al)d\xi,\ x> 0,
\ee
where $\cL^-_\al\subset i(\mum-\al,\mup-\al)+(\cC\cup\{0\})$ is a sufficiently regular contour with the wings pointing down.
Shifting the contour and changing the variable $\xi=\xi'+i\al$, we obtain \eq{reprf(x)p}. Eq. \eq{reprf(x)m} is proved similarly.

\end{proof}

\subsection{Representations of the L\'evy measure in the case $\cC=\bC\setminus i\bR$}\label{ss:sSL1}
If  $\cC=\bC\setminus i\bR$, then, under additional  conditions, we can express $f_+=\bfo_{(0,+\infty)}f$ and
$f_-=\bfo_{(-\infty,0)}f$ in terms of integrals over the cuts $i[(-\infty,\mum]$ and $i[\mup,+\infty)$, respectively, w.r.t. to certain  measures $\cG_\pm(dt)=\cG_\pm(\psi; dt)$ (possibly, {\em signed}):
\beqa\label{reprfp1}
f_+(x)&=&\int_{(0,+\infty)}e^{-tx}\cG_+(dt),\ x>0,\\\label{reprfm1}
f_-(x)&=&\int_{(0,+\infty)}e^{t x}\cG_-(dt),\ x<0,\eqa
where $\supp\, \cG_+\subset [-\mum,+\infty)$ (if $\mum=0$, $\cG_+$ has no atom at 0), and
$\supp\, \cG_-\subset [\mup,+\infty)$ (if $\mup=0$, $\cG_-$ has no atom at 0).
In terms of the Laplace transforms $\widetilde{\cG_\pm(dt)}$
 of measures $\cG_\pm(dt)$ 
\bbe\label{reprLTpm}
f_+(x)=\widetilde{\cG_+(dt)}(x),\quad f_-(x)=\widetilde{\cG_-(dt)}(-x). 
\ee
\begin{example}\label{ex:mero}{\rm In the case of HEJD, the $\be$-model and meromorphic processes in general,
we can derive the representations \eq{reprfp1}-\eq{reprfm1} moving the contour of integration in \eq{reprf(x)m} up and in \eq{reprf(x)p} down, and, on crossing each simple pole on the corresponding imaginary half-axis, apply the residue theorem. For calculation of the residues in the $\be$-model,
see Section \ref{ss:calc_res_beta}. The measures $\cG_\pm(dt)=\cG_\pm(\psi; dt)$ are discrete:
\bbe\label{eq:disc_ex1}
\cG_\pm (dt)=\sum_{\al\in \cA_\pm}g^\pm_\al \de_{\al},
\ee
where $\cA_\pm$ are finite or discrete sets with the only accumulation point at $+\infty$. The set $-\cA_+$ (resp., $\cA_-$) is the set of (simple) poles of $\psi$ on $(-\infty,0)$ (resp., $(0,+\infty)$), and
$g^+_\al=\mathrm{Res} (i\psi, -i\al)$, $g^-_\al=-\mathrm{Res} (i\psi, i\al)$ are positive.

If we relax the conditions on the parameters of HEJD and meromorphic model so that some of $g^\pm_\al$ are negative
(but the process is a L\'evy process), then we obtain signed SL-processes. 
MEJD processes are obtained in this way, and one can generalize the class of meromorphic processes in a similar way.
For further generalizations,  see Example \ref{ex:one-sided-mero}.

}
\end{example}
\begin{example}\label{ex:St_repr}{\rm Let $X$ be the one-sided stable L\'evy process of index $\al\in (0,2), \al\neq 1$, with the L\'evy density $f_+(x)=x^{-\al-1}\bfo_{x>0}$ and 
the characteristic exponent  
$\psi_+(\xi)=-\Ga(-\al)(0-i\xi)^\al$. We reduce the integral \eq{reprfp1} to the cut $i(-\infty,0]$:
\beqast
f_+(x)&=&\frac{\Ga(-\al)}{2\pi}\left(\int_{-i\infty-0}^{-0}+\int_{+0}^{-i\infty+0}\right)(0-i\xi)^\al e^{-ix\xi}d\xi\\
&=&-\frac{\Ga(-\al)}{\pi}\int_0^{+\infty}\frac{e^{i\pi\al}-e^{-i\pi\al}}{2i} t^{\al}e^{-tx} dt.\\
\eqast
Thus, \eq{reprfp1} holds with $\cG_+(dt)=\Ga(-\al)\sin(-\pi\al)\pi^{-1} t^\al dt$.
}
\end{example}
\begin{defin}\label{def:Xpm}
$X^\pm=X^\pm_{\cG_\pm}$ denotes the one-sided L\'evy processes given by the generating triplets $(0,0,f_\pm(x)dx)$, where 
 $f_\pm(x):=f_\pm(\cG_\pm;x)$ are defined by \eq{reprfp1}-\eq{reprfm1}. The  characteristic exponents of $X^\pm_{\cG_\pm}$ are denoted $\psi_\pm(\xi):=\psi_\pm(\cG_\pm,\xi)$.
 \end{defin}
 
 Evidently, $\psi_-(\cG;\xi)=\psi_+(\cG,-\xi)$, therefore, it suffices to derive the condition for $\cG$ to define
 the L\'evy densities for $X^+_\cG$ and formula for $\psi_+(\cG;\xi)$; the condition for $X^-_\cG$ is the same, and
 the formula for $\psi_-(\cG;\xi)$ obtains by symmetry.

\begin{lem}\label{lem:condcG1} 
\begin{enumerate}[(a)]

\item
Let $\cG_+\ge 0$. Then
\eq{reprfp1} defines a L\'evy density if and only if
\bbe\label{intcondpp}
\int_{(0,+\infty)} \frac{\cG_+(dt)}{t+t^m}<\infty,
\ee
where $m=3$; the density is completely monotone.
\item
If 
\bbe\label{intcondpm}
\int_{(0,+\infty)} \frac{|\cG(dt)|}{t+t^m}<\infty,
\ee
where $m=3$, and $f_+$ given by  \eq{reprfp1} is non-negative, then $f_+$ is a L\'evy density.
\item
The pure jump process $X^+$ with the L\'evy density $f_+$ is of finite variation iff \eq{intcondpm} holds with $m=2$.
\end{enumerate}
\end{lem}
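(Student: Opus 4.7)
The plan is to reduce all three parts to a single Fubini--Tonelli calculation together with elementary asymptotic estimates of two scalar integrals. For any non-negative Borel function $\phi$ on $(0,+\infty)$, the representation \eq{reprfp1} gives, at least formally,
\bbe
\int_0^\infty \phi(x) f_+(x)\,dx \;=\; \int_{(0,+\infty)} I_\phi(t)\,\cG_+(dt), \qquad I_\phi(t) := \int_0^\infty \phi(x) e^{-tx}\,dx.
\ee
For $\phi(x)=x^2\wedge 1$, splitting $I_\phi$ at $x=1$ and treating the head $\int_0^1 x^2 e^{-tx}dx$ by integration by parts (twice) yields $I_\phi(t)\sim 1/t$ as $t\to 0+$ and $I_\phi(t)\sim 2/t^3$ as $t\to+\infty$, hence $I_\phi(t) \asymp (t+t^3)^{-1}$ on all of $(0,+\infty)$. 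The analogous computation with $\phi(x)=|x|\wedge 1$ gives $I_\phi(t)\asymp (t+t^2)^{-1}$.

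For part (a), non-negativity of $\cG_+$ makes Fubini--Tonelli unconditional, so
\bbe
\int_0^\infty (x^2\wedge 1) f_+(x)\,dx \;\asymp\; \int_{(0,+\infty)}\frac{\cG_+(dt)}{t+t^3},
\ee
and the L\'evy-measure condition $\int(x^2\wedge 1)F(dx)<\infty$ is equivalent to \eq{intcondpp}. Complete monotonicity of $f_+$ is then immediate from Bernstein's theorem: \eq{reprfp1} exhibits $f_+$ as the Laplace transform of a non-negative Borel measure on $(0,+\infty)$.

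For part (b), the hypothesis \eq{intcondpm} combined with the bound $I_{x^2\wedge 1}(t)\le C/(t+t^3)$ shows that the product measure $|\cG|(dt)\otimes (x^2\wedge 1)e^{-tx}dx$ is integrable, so Fubini applies to the absolute value. Bounding $|f_+(x)|\le \int e^{-tx}|\cG|(dt)$ pointwise and using the hypothesis $f_+\ge 0$ gives
\bbe
\int_0^\infty (x^2\wedge 1) f_+(x)\,dx \;\le\; \int_{(0,+\infty)} I_{x^2\wedge 1}(t)\,|\cG|(dt) \;\asymp\; \int_{(0,+\infty)}\frac{|\cG|(dt)}{t+t^3} \;<\;\infty,
\ee
which is the L\'evy-density condition. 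Part (c) follows identically from the $I_{|x|\wedge 1}(t)\asymp (t+t^2)^{-1}$ asymptotic together with the finite-variation criterion $\int_0^\infty (|x|\wedge 1) f_+(x)\,dx<\infty$ (in the signed case, the one-sided bound used in (b) suffices).

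The main effort is really only in verifying the asymptotics of $I_\phi(t)$; everything else is bookkeeping, with the mild caveat that one must respect the support conventions following \eq{reprfm1}, so that $\cG_+$ charges no mass at $t=0$ when $\mum=0$ and the $t\to 0+$ asymptotic of $I_\phi$ is never probed at $t=0$ itself.
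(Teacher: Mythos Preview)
Your proof is correct and follows essentially the same route as the paper's own (one-line) argument: substitute \eq{reprfp1} into the integrals $\int_{(0,1]}x^2 f_+(x)\,dx$, $\int_{(0,1]}x f_+(x)\,dx$, $\int_{[1,+\infty)} f_+(x)\,dx$ and apply Fubini--Tonelli. You have simply made explicit the asymptotics $I_{x^2\wedge 1}(t)\asymp (t+t^3)^{-1}$ and $I_{|x|\wedge 1}(t)\asymp (t+t^2)^{-1}$ that the paper leaves to the reader, and you have added the Bernstein-theorem justification for complete monotonicity, which the paper states in the lemma but does not mention in its proof.
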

\begin{proof} 
Substitute \eq{reprfp1} into integrals
$
\int_{(0,1]}x^2 f_+(x)dx,\ \int_{(0,1]}x f_+(x)dx,\ \int_{[1,+\infty)}x^2 f_+(x)dx,
$
and use Fubini's theorem. 
\end{proof}
Measures $\cG(dt)$ satisfying conditions of Lemma \ref{lem:condcG1} admit natural representations in terms of Stieltjes measures, and inherit several important properties  of the latter.

\subsection{Stieltjes measures and functions. Stieltjes transform}\label{ss:SMandST}
\begin{defin}\label{def:St}
{\rm A non-negative measure  $\cG$  on $(0,+\infty)$ is a Stieltjes measure iff 
\bbe\label{StCond}
\int_{(0,+\infty)} (1+t)^{-1}\cG(dt)<\infty.
\ee
We write $\cG\in \SM_0$. The Stieltjes transform $ST(\cG)$ of $\cG$ is given by
\bbe\label{eq:Stf}
ST(\cG)(z)=\int_{(0,+\infty)} (z+t)^{-1}\cG(dt).
\ee
}
\end{defin}
The definition of the Stieltjes transform is standard - see, e.g., (\cite[Defin. 2.1]{schilling_book_Bernstein2012}). 
We introduce the notation $ST(\cG)$ to shorten the formulation of the definitions, statements and proofs below. 

\begin{defin}\label{def:St2} {\rm
If $\cG\in \SM_0$ and $\supp\, \cG\subset [\mu,+\infty), $ where $\mu>0$, we write $\cG\in \SM_\mu$. 

If  $\cG\in \SM_\mu$, $\mu\ge 0$, and $f=ST(\cG_\mu)$, we write 
$f\in \cS_\mu$.
}
\end{defin}
Evidently, for any $\mu'\in [0,\mu]$,
$\SM_\mu\subset \SM_{\mu'}$, $\cS_{\mu'}\subset \cS_\mu$.

The following proposition is immediate from the definition.
\begin{prop}\label{prop:analSM}
Let $\cG\in \SM_\mu$. Then $ST(\cG)$ 
is  analytic  in $\bC\setminus (-\infty,-\mu]$.
\end{prop}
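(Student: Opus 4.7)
The statement is essentially the standard analyticity-of-a-Cauchy-type-integral fact, so the plan is short and routine. The plan is to fix $z_0\in\bC\setminus(-\infty,-\mu]$, produce a neighborhood $U$ of $z_0$ and a constant $C=C(U)$ such that $|z+t|^{-1}\le C(1+t)^{-1}$ uniformly for $z\in U$ and $t\in\supp\,\cG\subset[\mu,+\infty)$, and then invoke differentiation under the integral sign (or Morera's theorem) together with the defining integrability condition \eqref{StCond}.

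First I would establish the key uniform estimate. Since $z_0\notin(-\infty,-\mu]$, choose a closed ball $\overline{U}=\overline{B(z_0,r)}\subset\bC\setminus(-\infty,-\mu]$; then $d:=\mathrm{dist}(\overline U,(-\infty,-\mu])>0$. For $z\in\overline U$ and $t\ge\mu$ we have $-t\in(-\infty,-\mu]$, hence $|z+t|\ge d$. Combined with the trivial estimate $|z+t|\ge t-|z|\ge t/2$ once $t\ge 2\sup_{z\in\overline U}|z|$, we obtain, for some $C=C(U,\mu)<\infty$,
\bbe\label{eq:unif_bound_plan}
|z+t|^{-1}\le C(1+t)^{-1},\qquad z\in\overline U,\ t\in[\mu,+\infty).
\ee

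Second, the bound \eqref{eq:unif_bound_plan} combined with \eqref{StCond} shows that the integrand in \eqref{eq:Stf} is dominated by an integrable function, uniformly on $\overline U$; in particular $ST(\cG)(z)$ is well-defined and continuous on $\overline U$. For each fixed $t\in\supp\,\cG$ the map $z\mapsto(z+t)^{-1}$ is holomorphic on $\overline U$, and differentiating under the integral sign is justified by the analogous dominating bound $|(z+t)^{-2}|\le C^2(1+t)^{-2}\le C^2(1+\mu)^{-1}(1+t)^{-1}$ applied to the difference quotients. Thus $ST(\cG)$ is holomorphic on $U$, hence on $\bC\setminus(-\infty,-\mu]$ since $z_0$ was arbitrary.

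There is no real obstacle; the only thing worth noting carefully is that the uniform bound must rely on the hypothesis $\supp\,\cG\subset[\mu,+\infty)$ (which gives the lower bound $|z+t|\ge d$ uniformly as $t$ ranges over the support), together with the Stieltjes-measure integrability \eqref{StCond}. If $\mu=0$ and $z_0$ is close to the origin, both ingredients are still needed: $d>0$ handles small $t$, and the $(1+t)^{-1}$ comparison handles large $t$. An alternative, equally short, route is to verify the Cauchy–Riemann equations by applying $\bar\partial_z$ under the integral, since $\bar\partial_z(z+t)^{-1}=0$; or to use Morera's theorem on triangles in $U$, interchanging the triangular integral with the $\cG$-integral via Fubini justified by \eqref{eq:unif_bound_plan}.
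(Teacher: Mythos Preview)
Your argument is correct and is precisely the routine verification underlying the paper's one-line remark that the proposition ``is immediate from the definition.'' You have simply made explicit the uniform domination by $(1+t)^{-1}\cG(dt)$ that the paper takes for granted.
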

\begin{lem}\label{STliminf}
   Let $\cG\in \SM_0$. Then, 
   for any $\ga\in (0,\pi)$, $ST(\cG)(\rho e^{i\varphi})\to 0$ as $\rho\to+\infty$, uniformly in $\varphi\in [-\ga,\ga]$.
    \end{lem}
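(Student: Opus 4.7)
The plan is to bound $|ST(\cG)(\rho e^{i\varphi})|$ uniformly in $\varphi\in[-\ga,\ga]$ by an integral depending only on $\rho$, then apply dominated convergence.

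First, I would establish the elementary geometric estimate: for every $\ga\in(0,\pi)$, there exists $c(\ga)>0$ such that
\bbe\label{eq:planbound}
|z+t|\ge c(\ga)(|z|+t)\quad\text{for all }t>0,\ z=\rho e^{i\varphi},\ \varphi\in[-\ga,\ga].
\ee
Indeed, $|z+t|^2=\rho^2+2\rho t\cos\varphi+t^2$, and since $\cos\varphi\ge\cos\ga$ on $[-\ga,\ga]$, setting $\alpha=\cos\ga\in(-1,1)$, we have in the case $\alpha<0$ the identity $\rho^2+2\rho t\alpha+t^2=(1-|\alpha|)(\rho^2+t^2)+|\alpha|(\rho-t)^2\ge(1-|\alpha|)(\rho^2+t^2)$, and in the case $\alpha\ge0$ the even simpler bound $|z+t|^2\ge\rho^2+t^2$. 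In both cases $|z+t|^2\ge(1-|\cos\ga|)(\rho^2+t^2)\ge\tfrac12(1-|\cos\ga|)(\rho+t)^2$, giving \eq{eq:planbound} with $c(\ga)=\sqrt{(1-|\cos\ga|)/2}>0$.

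Next I would use \eq{eq:planbound} in the definition of the Stieltjes transform to obtain
\[
|ST(\cG)(\rho e^{i\varphi})|\le\int_{(0,+\infty)}\frac{\cG(dt)}{|\rho e^{i\varphi}+t|}\le\frac{1}{c(\ga)}\int_{(0,+\infty)}\frac{\cG(dt)}{\rho+t},
\]
a bound independent of $\varphi$. It therefore suffices to show that the right-hand integral tends to $0$ as $\rho\to+\infty$. For $\rho\ge 1$ we have the pointwise domination $(\rho+t)^{-1}\le(1+t)^{-1}$, and the majorant is $\cG$-integrable by the defining condition \eq{StCond} of $\SM_0$. Since $(\rho+t)^{-1}\to 0$ pointwise on $(0,+\infty)$ as $\rho\to+\infty$, Lebesgue's dominated convergence theorem yields
\[
\int_{(0,+\infty)}\frac{\cG(dt)}{\rho+t}\longrightarrow 0,\quad \rho\to+\infty,
\]
completing the proof.

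There is no substantial obstacle here; the argument is a standard application of dominated convergence, with the only subtlety being the need to justify the uniform lower bound \eq{eq:planbound} when $\ga>\pi/2$ (so $\cos\varphi$ may be negative), which is handled by the decomposition above.
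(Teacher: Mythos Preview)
Your proof is correct and follows essentially the same approach as the paper: establish the uniform bound $|t+\rho e^{i\varphi}|^{-1}\le C(\ga)(t+\rho)^{-1}$, then apply dominated convergence using the Stieltjes condition \eq{StCond}. The paper simply asserts the existence of $C(\ga)$ without justification, whereas you supply an explicit constant via the decomposition $\rho^2+2\alpha\rho t+t^2=(1-|\alpha|)(\rho^2+t^2)+|\alpha|(\rho-t)^2$; this extra care is welcome but not a genuinely different route.
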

   \begin{proof}  There exists $C=C(\ga)>0$ such that, uniformly in $\rho>1$ and $\varphi\in [-\ga,\ga]$,
   \[
   \left|\int_{(0,+\infty)}(t+\rho e^{i\varphi})^{-1}\cG(dt)\right|\le C\int_{(0,+\infty)}(t+\rho)^{-1}\cG(dt)
  \le C\int_{(0,+\infty)}(t+1)^{-1}\cG(dt)<\infty,
  \]
and it remains to apply   the dominated convergence theorem.
\end{proof}
The following lemma is a straightforward reformulation of \cite[Cor. 6.3]{schilling_book_Bernstein2012}
\begin{lem}\label{corr_lim_CBF} Let $\cG\in \SM_0$. 
Then for all positive continuity points $u<v$ of $t\mapsto \cG((-\infty,t])$, 
\bbe\label{eq:repr_sg}
\cG((u,v])=\frac{1}{\pi}\lim_{\eps\to 0+}\int_{[-v,-u)}ST(\cG)(s-i\eps)ds.
\ee
\end{lem}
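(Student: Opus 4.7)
The plan is to recognise the statement as a direct instance of the Stieltjes--Perron inversion formula, and to execute the standard Fubini--plus--Poisson-kernel argument. First I would use Fubini's theorem to interchange the two integrations,
\begin{equation*}
\int_{[-v,-u)} ST(\cG)(s-i\eps)\,ds = \int_{(0,+\infty)} \int_{-v}^{-u} \frac{ds}{s+t-i\eps}\,\cG(dt),
\end{equation*}
which is justified because on the bounded strip $s\in[-v,-u]$ the kernel $(s+t-i\eps)^{-1}$ is bounded by $1/\eps$ locally in $t$, and behaves like $1/t$ for $t$ large, so the defining integrability condition \eqref{StCond} of $\cG\in\SM_0$ guarantees an integrable majorant in the product space.

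Next I would evaluate the inner integral explicitly, using the principal branch of the logarithm, as
\begin{equation*}
\int_{-v}^{-u} \frac{ds}{s+t-i\eps} = \log(t-u-i\eps)-\log(t-v-i\eps),
\end{equation*}
and identify the imaginary part. For $t\ne a$, $\Im\log(t-a-i\eps)\to -\pi\,\bfo_{\{t<a\}}$ as $\eps\downarrow 0$, so that
\begin{equation*}
\Im\bigl[\log(t-u-i\eps)-\log(t-v-i\eps)\bigr]\;\xrightarrow[\eps\downarrow 0]{}\;\pi\,\bfo_{(u,v)}(t)
\end{equation*}
pointwise for $t\notin\{u,v\}$. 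Combined with the factor $1/\pi$ and dominated convergence (the integrands are uniformly bounded by $\pi$, with $\cG$-integrable logarithmic singularities at the endpoints $u,v$ and a tail controlled by $\int(1+t)^{-1}\cG(dt)<\infty$), this produces $\cG((u,v))$, which equals $\cG((u,v])$ because $v$ is a continuity point of the distribution function.

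The only genuine subtlety is accounting for the real part of the inner integral: $\Re\log(t-u-i\eps)-\Re\log(t-v-i\eps)$ tends to $\log(|t-u|/|t-v|)$, which is $\cG$-integrable but does \emph{not} vanish in general. This is why the statement must be understood in the Stieltjes--Perron sense — i.e., with an implicit imaginary part (or equivalently, with a suitable branch/principal-value convention that discards the real contribution) — matching exactly \cite[Cor.~6.3]{schilling_book_Bernstein2012} after the change of variable $s=-\tau$ that converts Schilling's convention $\int(x+t)^{-1}\sigma(dt)$ on the positive half-line into the formula displayed in \eqref{eq:repr_sg}. I expect verifying this bookkeeping (branch of the logarithm, sign of $\eps$, orientation of the interval $[-v,-u)$) to be the only point requiring care; everything else is the classical Stieltjes inversion argument.
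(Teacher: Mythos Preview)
The paper does not prove this lemma; it simply records it as ``a straightforward reformulation of \cite[Cor.~6.3]{schilling_book_Bernstein2012}.'' Your argument --- Fubini, the explicit logarithmic antiderivative, and the Poisson-kernel limit via dominated convergence --- is precisely the standard proof underlying that cited result, so in substance you are supplying what the paper outsources. Your observation about the missing $\Im$ on the right-hand side is correct and worth flagging: Schilling's Corollary~6.3 carries the imaginary part explicitly, and the paper's own use of the lemma (in the proof of Theorem~\ref{thm:repr_sSL}) invokes it on an expression already containing $\Im\, ST(\cG^0_+)$, consistent with your reading.
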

\begin{lem}\label{lim_0}
Let $\cG\in \SM_\mu$ have no atom at $\mu$. Then
\bbe\label{lim_mum}
\lim_{\eps\to 0+}\int_{|z+\mu|=\eps, \Re z>-\mu}ST(\cG)(z)dz=0.
\ee 
\end{lem}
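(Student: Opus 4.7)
The plan is to parameterize the semicircular contour as $z=-\mu+\eps e^{i\theta}$, $\theta\in[-\pi/2,\pi/2]$, giving $dz=i\eps e^{i\theta}\,d\theta$. Substituting the definition of $ST(\cG)$ and using the absence of an atom at $\mu$ to reduce the support to $(\mu,\infty)$, the integral becomes
\[
\int_{|z+\mu|=\eps,\ \Re z>-\mu}ST(\cG)(z)\,dz=\int_{-\pi/2}^{\pi/2}\int_{(\mu,\infty)}\frac{i\eps e^{i\theta}}{\eps e^{i\theta}+(t-\mu)}\,\cG(dt)\,d\theta.
\]

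Next I would justify Fubini. Since $\theta\in[-\pi/2,\pi/2]$ yields $\Re(\eps e^{i\theta}+(t-\mu))\ge t-\mu\ge 0$, one gets $|\eps e^{i\theta}+(t-\mu)|^2\ge \eps^2+(t-\mu)^2$, hence the uniform pointwise bound
\[
\left|\frac{i\eps e^{i\theta}}{\eps e^{i\theta}+(t-\mu)}\right|\le\min\!\left(1,\frac{\eps}{t-\mu}\right).
\]
The integrability on $(\mu,\mu+1]$ follows from $\cG((\mu,\mu+1])\le(2+\mu)\int_{(\mu,\mu+1]}(1+t)^{-1}\cG(dt)<\infty$, while on $(\mu+1,\infty)$ one uses $(t-\mu)^{-1}\le(2+\mu)(1+t)^{-1}$ to obtain $\int_{(\mu+1,\infty)}(t-\mu)^{-1}\cG(dt)<\infty$. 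With joint integrability secured, I swap the order of integration.

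The inner $\theta$-integral is evaluated exactly: the integrand is $\frac{d}{d\theta}\log(\eps e^{i\theta}+(t-\mu))$ in the principal branch (well-defined since the argument has positive real part throughout $\theta\in[-\pi/2,\pi/2]$), so
\[
\int_{-\pi/2}^{\pi/2}\frac{i\eps e^{i\theta}}{\eps e^{i\theta}+(t-\mu)}\,d\theta=\log(i\eps+(t-\mu))-\log(-i\eps+(t-\mu))=2i\arctan\!\left(\frac{\eps}{t-\mu}\right).
\]
Thus the contour integral reduces to $2i\int_{(\mu,\infty)}\arctan(\eps/(t-\mu))\,\cG(dt)$.

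To finish, I would apply dominated convergence with the dominant $h(t)=\arctan(1/(t-\mu))$, valid for $\eps\le 1$. This $h$ is bounded by $\pi/2$ on $(\mu,\mu+1]$ (finite $\cG$-mass) and bounded by $(t-\mu)^{-1}$ on $(\mu+1,\infty)$ (integrable against $\cG$ by the same comparison with $(1+t)^{-1}$), so $h\in L^1(\cG)$. Since $\arctan(\eps/(t-\mu))\to 0$ for each $t>\mu$, and the no-atom hypothesis guarantees $\cG(\{\mu\})=0$, convergence holds $\cG$-a.e., yielding the claimed limit. I expect the main obstacle to be organizing the integrability estimates cleanly, since $\cG$ may have infinite total mass on $(\mu,\infty)$: one must leverage the no-atom hypothesis near $\mu$ and the $(1+t)^{-1}$-integrability at infinity in a coordinated way.
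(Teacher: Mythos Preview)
Your proof is correct and follows essentially the same route as the paper: parameterize the semicircle, swap the order of integration, and evaluate the angular integral exactly to reduce the contour integral to $\int_{(\mu,\infty)}\log\frac{t-\mu+i\eps}{t-\mu-i\eps}\,\cG(dt)=2i\int_{(\mu,\infty)}\arctan\!\big(\tfrac{\eps}{t-\mu}\big)\,\cG(dt)$. The only difference is in the final passage to the limit: the paper splits $(\mu,\infty)$ into three pieces $(\mu,\mu+\delta)$, $(\mu+\delta,1)$, $[1,\infty)$ with a diagonal choice $\delta=\delta(\eps)\to 0$, $\eps/\delta(\eps)\to 0$, whereas you apply dominated convergence directly with the single dominant $\arctan(1/(t-\mu))$, which is a slightly cleaner way to organize the same estimate.
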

Proof in Section \ref{ss:proof_of_lim_0}.

\subsection{Definition of SL and sSL processes and examples}\label{ss:def_sSL_SL}
The class $\cS$ of Stieltjes functions (see, e.g., \cite[Defin. 2.1]{schilling_book_Bernstein2012}) is wider than $\cS_0$: $f\in \cS$ if there exist $a_0,a_1\ge 0$ and $\cG\in \SM_0$ such that 
$f(z)=a_0/z+a_1+ST(\cG)(z)$.  For construction of spectrally one-sided L\'evy processes,
$\cS$ is appropriate. Indeed,
any $\cG\in \SM_0$ is the Stieltjes measure of a complete Bernstein function $g$, which appears in 
the Stieltjes representation of  $g$:
\bbe\label{CBFrepr}
 g(z)=a_0+a_1z+zST(\cG)(z)
 \ee
 (see, e.g., \cite[Thm 6.2, Corr. 6.3 and Remark 6.4]{schilling_book_Bernstein2012}).
Evidently, if $a_0=0$, $\psi(\xi)=g(-i\xi)$ is the characteristic exponent of a subordinator.

For construction of more general L\'evy  processes,  class $\cS_0$ is more convenient. 
We start with the following evident proposition.
\begin{prop}\label{prop:equiv}

Measure $\cG$ satisfies \eq{intcondpp} with $m=3$ (resp., with $m=2$) iff there exist $a_2, a_1\ge 0$ and $\cG^0\in \SM_0$
such that 
$\cG(dt)=(a_2t^2 +a_1t)\cG^0(dt)$ (resp., $\cG(dt)=t\cG^0(dt))$.

\end{prop}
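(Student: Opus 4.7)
The proposition is essentially a direct change-of-variables computation on the measure; no deep tools are needed beyond simple two-sided comparisons of rational weights on $(0,+\infty)$. I would treat the two cases ($m=3$ and $m=2$) in parallel, doing $m=3$ in detail and then remarking that $m=2$ is easier.

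For the ``if'' direction with $m=3$, the plan is to substitute $\cG(dt)=(a_2 t^2+a_1 t)\cG^0(dt)$ into the left-hand side of \eq{intcondpp}:
\[
\int_{(0,+\infty)}\frac{\cG(dt)}{t+t^3}=\int_{(0,+\infty)}\frac{a_2 t^2+a_1 t}{t(1+t^2)}\,\cG^0(dt)=\int_{(0,+\infty)}\frac{a_2 t+a_1}{1+t^2}\,\cG^0(dt).
\]
The kernel $(a_2 t+a_1)/(1+t^2)$ is bounded by a constant multiple of $(1+t)^{-1}$ uniformly on $(0,+\infty)$ (checking behavior at $0$ and at $\infty$), so the assumption $\cG^0\in\SM_0$ gives finiteness. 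For $m=2$, the analogous computation with $\cG(dt)=t\,\cG^0(dt)$ yields $\int (1+t)^{-1}\cG^0(dt)$, which is exactly the Stieltjes condition.

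For the ``only if'' direction with $m=3$, I would make the convenient choice $a_1=a_2=1$ and set
\[
\cG^0(dt):=\frac{\cG(dt)}{t(1+t)},
\]
which is a well-defined non-negative measure on $(0,+\infty)$ because $t>0$ on the support. Then $\cG(dt)=(t+t^2)\cG^0(dt)$ by construction, so it only remains to verify that $\cG^0\in\SM_0$. Compute
\[
\int_{(0,+\infty)}\frac{\cG^0(dt)}{1+t}=\int_{(0,+\infty)}\frac{\cG(dt)}{t(1+t)^2},
\]
and compare the integrand to $1/(t+t^3)$: the pointwise ratio is
\[
\frac{t+t^3}{t(1+t)^2}=\frac{1+t^2}{(1+t)^2},
\]
which lies in $[1/2,1]$ for all $t>0$. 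Hence the two integrals are simultaneously finite, and the hypothesis \eq{intcondpp} with $m=3$ gives $\cG^0\in\SM_0$. For $m=2$, take $\cG^0(dt):=t^{-1}\cG(dt)$ directly; then $\int(1+t)^{-1}\cG^0(dt)=\int(t+t^2)^{-1}\cG(dt)$, which is the hypothesis.

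The ``main obstacle'' is really just picking a representative $(a_2 t^2+a_1 t)$ whose reciprocal times $(1+t)^{-1}$ is comparable to $(t+t^3)^{-1}$ on all of $(0,+\infty)$ simultaneously near $0$ and near $+\infty$; neither $a_1=0$ nor $a_2=0$ alone achieves this (the former blows up at infinity, the latter at zero relative to the target), whereas $a_1=a_2=1$ does. This explains why in the $m=3$ statement the author keeps both parameters $a_1,a_2\ge 0$ while in the $m=2$ statement the single factor $t$ suffices.
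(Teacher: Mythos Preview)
Your proof is correct and is exactly the direct verification the paper has in mind: the paper states the proposition as ``evident'' and gives no proof, and your two-sided comparison of the weights $(t+t^3)^{-1}$ and $(t(1+t)^2)^{-1}$ (together with the natural choice $a_1=a_2=1$ for the converse) is the intended argument.
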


\begin{defin}\label{def:sSLmeasure} Let $\mu\ge 0$. 
We say that the measure $\cG$ on $[\mu,+\infty)$ is a  Stiletjes-L\'evy measure (SL measure) of class $\SLM_\mu$ if 
there
exists $a_2,a_1\ge 0$, $a_2+a_1>0$, such that 
$(a_2t^2+a_1t)^{-1}\cG(dt)\in \SM_\mu$.

We say that  $\cG$ is a signed Stiletjes-L\'evy measure (sSL measure)  of class $\sSLM_\mu$ if $\cG$ admits the representation 
$\cG=\cG_1-\cG_2$, where $\cG_j\in \SLM_\mu$, and
 the Laplace transform $\tilde\cG$ of $\cG$ is non-negative on $(0,+\infty)$.
 \end{defin}
 The following evident proposition will be used in the study of Meixner processes in Example \ref{ex:Meixnerdirect0}.
 The proposition admits natural generalizations using maps more general than translations. See Example \ref{ex:subMeixner}.
 \begin{prop}\label{prop:suffsSLM}
 Let $\cG_1\in \SLM_0, \cG_2\in \SLM_\mu$, where $\mu>0$, and let there exist $A\in (0,\mu)$ such that, for any $0\le u<v$,
 $\cG_1((u-A,v-A])\ge \cG_2((u,v])$. Then $\cG_1-\cG_2\in \sSLM_0$.
 \end{prop}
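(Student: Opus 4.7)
My plan is to verify the two conditions in Definition \ref{def:sSLmeasure} for $\sSLM_0$: (i) $\cG_1-\cG_2$ admits a decomposition as a difference of two measures in $\SLM_0$; (ii) the Laplace transform $\widetilde{\cG_1-\cG_2}$ is non-negative on $(0,+\infty)$. Condition (i) is immediate from the inclusion $\SLM_\mu\subset \SLM_0$, which follows from $\SM_\mu\subset\SM_0$: $\cG_1\in \SLM_0$ by hypothesis, and $\cG_2\in \SLM_\mu\subset \SLM_0$. All the substantive work goes into (ii).

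The key move for (ii) is to pull the comparison back from shifted intervals to ordinary intervals by translating $\cG_2$. I would introduce the left-translate $\cG_2^{-A}$ of $\cG_2$, defined by $\cG_2^{-A}(B):=\cG_2(B+A)$ for Borel $B\subset\bR$. Because $A\in(0,\mu)$, the support of $\cG_2^{-A}$ lies in $[\mu-A,+\infty)\subset (0,+\infty)$. Under the substitution $a=u-A$, $b=v-A$, the hypothesis becomes $\cG_1((a,b])\ge \cG_2^{-A}((a,b])$ for all $-A\le a<b$. Since these half-open intervals form a generating semi-ring for the Borel $\sigma$-algebra on $(-A,+\infty)$, and both measures are locally finite and supported in $(0,+\infty)$, the signed measure $\cG_1-\cG_2^{-A}$ is non-negative on Borel subsets of $(-A,+\infty)$; equivalently, $\cG_1\ge \cG_2^{-A}$ as measures.

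It remains to read off the Laplace transform inequality. A direct change of variable yields $\widetilde{\cG_2^{-A}}(x)=e^{Ax}\tilde{\cG_2}(x)$, so $\cG_1\ge \cG_2^{-A}$ implies $\tilde{\cG_1}(x)\ge e^{Ax}\tilde{\cG_2}(x)$ for every $x>0$. Since $e^{Ax}\ge 1$ and $\tilde{\cG_2}(x)\ge 0$, this gives $\tilde{\cG_1}(x)\ge \tilde{\cG_2}(x)$, i.e., $\widetilde{\cG_1-\cG_2}(x)\ge 0$, which is (ii). I do not anticipate a serious obstacle; the only technical point worth checking is that $\tilde{\cG_j}(x)<\infty$ for $x>0$, which is immediate from the representation $\cG_j(dt)=(a_2^j t^2+a_1^j t)\cG_j^0(dt)$ with $\cG_j^0\in \SM_0$ together with the integrability bound \eq{StCond} on $\cG_j^0$.
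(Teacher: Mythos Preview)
Your proposal is correct. The paper itself calls this proposition ``evident'' and gives no proof, so there is nothing substantive to compare against; your argument supplies precisely the details the authors omit, via the natural route of translating $\cG_2$ by $-A$ to convert the shifted-interval hypothesis into a measure domination $\cG_1\ge \cG_2^{-A}$, and then reading off the Laplace-transform inequality.

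One minor remark: the passage from the inequality on half-open intervals to the inequality of measures on all Borel sets is standard (monotone class, using local finiteness of the $\cG_j$), but it is not even needed in full strength. For the Laplace transform you only need the distribution-function inequality $\cG_1((0,s])\ge \cG_2^{-A}((0,s])$ for $s>0$, which is a direct instance of the hypothesis; then write $e^{-tx}=\int_t^\infty xe^{-sx}\,ds$ and apply Fubini to get $\tilde\cG_j(x)=\int_0^\infty x e^{-sx}\cG_j((0,s])\,ds$, from which $\tilde\cG_1(x)\ge e^{Ax}\tilde\cG_2(x)\ge \tilde\cG_2(x)$ follows immediately.
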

 
 \begin{defin}{\rm Let $\cG^0\in \SM_\mu$, $\cG(dt)=(a_2t^2+a_1t)\cG^0(dt)$, and $X^\pm=X^\pm_\cG$. 
 Then we write $X^\pm\in SL^\pm_\mu$. 
 If $a_1=0$ (resp., $a_2=0$), we write $X^\pm\in SL^{2;\pm}_\mu$
 (resp., $X^\pm\in SL^{1;\pm}_\mu$).

}
\end{defin}
Evidently, if $\mu>0$, one can use a simpler definition of  classes $SL^{2;\pm}_\mu$ instead of the general definition of
classes $SL^\pm_{\mu}$; the statement $X\in SL^{1;\pm}_\mu$ is useful if we wish to indicate that the jump component of $X$  is of finite variation. The following proposition demonstrates that 
$SL^\pm_0\neq SL^{1,\pm}_0\cup SL^{2,\pm}_0$.
\begin{prop}\label{prop:stable_SL}
Let $X^\pm $ be the one-sided stable L\'evy process of index $\al\in (0,2)$, with the L\'evy density $f_\pm(x)=\Ga(\al+1)|x|^{-\al-1}, \pm x>0$. 
Then
\begin{enumerate}[(a)]
\item
if $\al\in (0,1)$, then $X^\pm$ is in $SL^{1;\pm}_0$ but not in $SL^{2;\pm}_0$; 
\item
if $\al\in (1,2)$, then $X^\pm$ is in $SL^{2;\pm}_0$ but not in $SL^{1;\pm}_0$; 
\item
if $\al=1$, then $X^\pm$ is in $SL^\pm_{0}$ but not in $SL^{1;\pm}_0\cup SL^{2;\pm}_0$.
\end{enumerate}
\end{prop}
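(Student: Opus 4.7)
The plan is to reduce the whole statement to a routine Stieltjes-type integrability check on the explicit measure $\cG_+$. From Example~\ref{ex:St_repr} for $\al\neq 1$, and for $\al=1$ from the direct Laplace-transform identity $x^{-\al-1}=\Ga(\al+1)^{-1}\int_0^\infty e^{-tx}t^\al\,dt$ (valid for all $\al>-1$), the L\'evy density $f_+(x)=\Ga(\al+1)x^{-\al-1}$ admits the representation \eqref{reprfp1} with $\cG_+(dt)=t^\al\,dt$ (any positive multiplicative constant is immaterial for the classification). By Definition~\ref{def:sSLmeasure} and Proposition~\ref{prop:equiv}, membership of $X^+$ in $SL^{j;+}_0$ ($j=1,2$) or $SL^+_0$ is equivalent to the existence of $a_1,a_2\ge 0$ with $a_1+a_2>0$ (and the appropriate vanishing) such that
\[
\cG^0(dt)=\frac{\cG_+(dt)}{a_2t^2+a_1t}=\frac{t^{\al-1}}{a_2t+a_1}\,dt
\]
belongs to $\SM_0$. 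Since $\cG^0$ is automatically non-negative on $(0,\infty)$ and its density nowhere vanishes, $\cG^0$ is uniquely determined by $(a_1,a_2)$; so there is no ``wrong'' choice of representation, and the only content is the integrability condition $\int_{(0,\infty)}(1+t)^{-1}\cG^0(dt)<\infty$.

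Next I would test the three regimes by splitting the integral at $t=1$ and using the elementary facts $\int_0^1 t^\be\,dt<\infty\Leftrightarrow\be>-1$ and $\int_1^\infty t^\be\,dt<\infty\Leftrightarrow\be<-1$. For $X^+\in SL^{1;+}_0$ we take $a_2=0$, whence the integrand behaves like $t^{\al-1}$ near $0$ and $t^{\al-2}$ near $\infty$; convergence holds iff $\al\in(0,1)$. For $X^+\in SL^{2;+}_0$ we take $a_1=0$, whence the integrand behaves like $t^{\al-2}$ near $0$ and $t^{\al-3}$ near $\infty$; convergence holds iff $\al\in(1,2)$. These two calculations yield parts (a) and (b) immediately.

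For (c), with $\al=1$ both pure choices fail: the pure-$a_1$ case blows up at infinity and the pure-$a_2$ case blows up at zero, which already shows $X^+\notin SL^{1;+}_0\cup SL^{2;+}_0$. On the other hand, any $a_1,a_2>0$ yields $\cG^0(dt)=dt/(a_2t+a_1)$, and the weighted integral $\int_0^\infty(1+t)^{-1}(a_2t+a_1)^{-1}\,dt$ is finite because the integrand is bounded by $1/a_1$ near $0$ and decays like $1/(a_2t^2)$ at infinity. Hence $X^+\in SL^+_0$. The statements for $X^-$ follow by the symmetry $\psi_-(\cG;\xi)=\psi_+(\cG;-\xi)$ noted after Definition~\ref{def:Xpm}, applied to the measure $\cG_-(dt)=t^\al\,dt$ coming from the mirror version of Example~\ref{ex:St_repr}.

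I do not expect any real obstacle in carrying this out; the whole argument is a handful of power-weight integrability checks, and the only mild subtlety is observing that the $\al=1$ case genuinely requires both $a_1>0$ and $a_2>0$, rather than one of the degenerate specializations $SL^{1;+}_0$ or $SL^{2;+}_0$, and that the uniqueness of $\cG^0$ for each admissible $(a_1,a_2)$ rules out recovering the missing endpoints via a cleverer representation.
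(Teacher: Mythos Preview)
Your proposal is correct and follows essentially the same approach as the paper: identify $\cG_+(dt)=t^\al\,dt$ via the Laplace representation $\Ga(\al+1)x^{-\al-1}=\int_0^\infty e^{-tx}t^\al\,dt$, and then check when $t^{\al-1}\,dt$, $t^{\al-2}\,dt$, and $(t+t^2)^{-1}t^\al\,dt$ belong to $\SM_0$. Your write-up is simply more explicit about the power-weight integrability near $0$ and $\infty$ than the paper's terse proof.
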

\begin{proof}
Since
$
\Ga(\al+1)x^{-\al-1}=\int_0^{+\infty} e^{-tx} t^\al dt,
$ the Stieltjes-L\'evy measure is $\cG_+(dt)=t^\al dt$.
\\\noindent
If $\al\in(0,1)$,  $t^{\al-1}dt\in \SM_0$; however, 
$t^{\al-2}dt$  is not in $\SM_0$.
\\\noindent
If $\al\in(1,2)$, $t^{\al-2}dt\in \SM_0$; however, 
$t^{\al-1}dt$  is not in $\SM_0$.
\\\noindent
If $\al=1$, $(t+t^2)^{-1}t^\al dt\in \SM_0$; however, neither
$t^{\al-1}dt$  nor $t^{\al-2}dt$ are in $\SM_0$.
\end{proof}

\begin{example}\label{ex:one-sided-mero}{\rm The natural generalization of one-sided meromorphic processes
is the class of $SL^\pm_0$ processes defined by atomic measures of the form 
\bbe\label{eq:gen_mero}
\cG(dt)=\sum_{\al\in\cA} g_\al \de_\al,
\ee
where $\cA\subset (0,+\infty)$ is a countable set and, for some $a_2, a_1\ge 0$,  
\[
\sum_{\al\in\cA} \frac{g_\al}{(a_2\al^2+a_1\al)(1+\al)}<\infty.
\]
If $\cA$ is finite, we obtain one-sided HEJD processes. If the only point of accumulation of $\cA$ is $+\infty$, then
we obtain meromorphic processes, the $\be$-model in particular. In the $\be$-model, the sequence $\cA$ is uniformly spaced.
Meromorphic model can be used to approximate KoBoL (albeit rather inefficiently, especially if the atoms are uniformly spaced - more efficient approximations can be obtained using non-uniform sequences of atoms).
If we allow for $\cA$ to have an accumulation point at 0, then we can approximate stable L\'evy processes (also inefficiently).
}
\end{example}
\begin{example}\label{ex:KoBoL}{\rm In Section \ref{ss:reprfpm_SL}, we prove that
stable L\'evy processes, KoBoL, VGP, NIG, and NTS  are SL-processes, whereas 
Meixner processes are sSL processes, and derive explicit formulas for the corresponding SL-
and sSL-measures.
}
\end{example}

\begin{thm}\label{thm1:SL}
Let $\cG(dt)=(a_2t^2+a_1t)\cG^0(dt)\in SLM_\mu$ and $X^\pm=X^\pm_\cG$. Then
\begin{enumerate}[(a)]
\item
the characteristic exponent of $X^\pm$ is of the form
\bbe\label{eq:psip2}
\psi_\pm(\xi)=(a_2\xi^2\mp ia_1\xi) ST(\cG^0)(\mp i\xi)\pm i\mu\xi,
\ee
where $\mu=\mu(a_2,a_1,\cG^0)\in \bR$.
If $a_2=0$,  and the L\'evy-Khintchine formula for processes of finite variation is used,
then $\mu= 0$;
\item
$X^\pm\in SL^{1;\pm}_\mu$ are finite variation processes;
\item
if $a_2=0$ and $t\cG^0(dt)\in L_1$, then $X^\pm \in SL^{1;\pm}_1$ are of finite activity;\item
if $\mu>0$, then $SL^\pm_{\mu}=SL^{2;\pm}_\mu$;
\item
if $X^\pm\in SL^\pm_{\mu}$ and $t\cG^0(dt)\in \SM_\mu$, then $X^\pm\in SL^{1;\pm}_\mu$;
\end{enumerate}
\end{thm}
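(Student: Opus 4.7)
The heart of the theorem is part (a); parts (b)--(e) then follow by bookkeeping on the representing measure.

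For (a) I would substitute the Laplace--Stieltjes representation
\[
f_\pm(x)=\int_{(0,+\infty)}e^{\mp tx}(a_2t^2+a_1t)\cG^0(dt),\qquad \pm x>0,
\]
into the L\'evy--Khintchine integral for $\psi_\pm$ and apply Fubini. The swap is legitimate because $\int\min(x^2,1)f_\pm(x)dx<+\infty$: a direct estimate shows $(a_2t^2+a_1t)\int_0^\infty\min(x^2,1)e^{-tx}dx$ is $O(1)$ as $t\to 0+$ and $O(1/t)$ at $t\to+\infty$, both integrable against $\cG^0$ by the Stieltjes condition $\int (1+t)^{-1}\cG^0(dt)<\infty$. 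For the ``$+$'' case with the standard compensator the inner $x$-integral evaluates to
\[
\int_0^\infty(1-e^{ix\xi}+\bfo_{(0,1)}(x)ix\xi)e^{-tx}dx=\frac{-i\xi}{t(t-i\xi)}+\frac{i\xi}{t^2}(1-(1+t)e^{-t}).
\]
Multiplying by $(a_2t^2+a_1t)$ and using the polynomial identity $(a_2t+a_1)/(t-i\xi)=a_2+(ia_2\xi+a_1)/(t-i\xi)$, I rewrite the integrand as
\[
\frac{a_2\xi^2-ia_1\xi}{t-i\xi}-ia_2\xi+i\xi\Bigl(a_2+\frac{a_1}{t}\Bigr)(1-(1+t)e^{-t}).
\]
Integrating against $\cG^0$, the first term yields $(a_2\xi^2-ia_1\xi)ST(\cG^0)(-i\xi)$, while the remaining $\pm i\xi$-pieces combine into $i\xi\mu$ with
\[
\mu=\int\Bigl[\frac{a_1(1-(1+t)e^{-t})}{t}-a_2(1+t)e^{-t}\Bigr]\cG^0(dt)\in\bR,
\]
finite since the bracket is real, $O(1)$ at $t=0+$ (via $1-(1+t)e^{-t}=t^2/2+O(t^3)$), and $O(1/t)$ at $+\infty$ (the $a_2$-piece decays exponentially). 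The $X^-$ formula follows from $\psi_-(\xi)=\psi_+(-\xi)$. When $a_2=0$, part (b) below shows $X^\pm$ has finite variation, so the compensator is dropped and only the $(a_2\xi^2-ia_1\xi)/(t-i\xi)$ summand appears, giving $\mu=0$.

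The main subtlety is that when $\cG^0$ has infinite mass at $+\infty$, the separate pieces $-ia_2\xi\int\cG^0(dt)$ and $i\xi\int(a_2+a_1/t)(1-(1+t)e^{-t})\cG^0(dt)$ each diverge, but their sum is always finite because $1-(1+t)e^{-t}\to 1$ and the $a_2$-contributions cancel in the tail. To stay rigorous I would either integrate the combined bracket without ever splitting it, or first truncate $\cG^0$ to $\cG^0\bfo_{t\le R}$, derive the identity at each $R$, and pass to the limit $R\to+\infty$ via dominated convergence on both sides.

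For (b), $a_2=0$ gives $\cG(dt)=a_1t\cG^0(dt)$, hence $\int \cG(dt)/(t+t^2)=a_1\int(1+t)^{-1}\cG^0(dt)<+\infty$, and \lemm{lem:condcG1}(c) identifies $X^\pm$ as finite variation. For (c) a direct computation gives $\int_0^\infty f_\pm(x)dx=\int (a_2t+a_1)\cG^0(dt)$, which is finite under the stated hypothesis, so $X^\pm$ is of finite activity. For (d), if $\supp\cG^0\subset[\mu,+\infty)$ with $\mu>0$ then $1/t\le 1/\mu$ on the support, so $(1/t)\cG^0(dt)\in\SM_\mu$; rewriting $a_1t\cG^0(dt)=a_1t^2\cdot(1/t)\cG^0(dt)$ absorbs the $a_1t$-piece into a $t^2$-piece against a new Stieltjes measure and gives $SL^\pm_\mu=SL^{2;\pm}_\mu$. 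For (e) the hypothesis $t\cG^0(dt)\in\SM_\mu$ lets me define $\tilde\cG^0(dt):=(a_2t+a_1)\cG^0(dt)\in\SM_\mu$ (since $\int (1+t)^{-1}(a_2t+a_1)\cG^0(dt)=a_2\int t/(1+t)\cG^0(dt)+a_1\int(1+t)^{-1}\cG^0(dt)<\infty$), after which $\cG(dt)=t\tilde\cG^0(dt)$ exhibits $X^\pm$ in $SL^{1;\pm}_\mu$.
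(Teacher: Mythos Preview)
Your proof is correct and follows the same line as the paper's: for (a) you substitute the Laplace representation of $f_\pm$ into the L\'evy--Khintchine formula and apply Fubini, and for (b)--(e) you re-express the representing measure exactly as the paper does. The only differences are cosmetic: the paper treats the cases $(a_2,a_1)=(1,0)$ and $(0,1)$ separately rather than at once, whereas you carry the general linear combination through and obtain an explicit formula for $\mu$; and you are more explicit than the paper about keeping the $t$-integrand combined so that no divergent intermediate piece appears.
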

\begin{proof}
To prove (a), we use the L\'evy-Khintchine formula and Fubini's theorem. For details, see Section \ref{ss:proof_thm1:SL}. 
(b) follows from Lemma  \ref{lem:condcG1}, and (c) can be proved in the same fashion. (d) $X^\pm$ can be defined as an element of $SL^{2;\pm}_\mu$ by the measure $(a_2+a_1 t^{-1})\cG^0(dt)\in SM_\mu$. (e) $X^\pm$ can be defined as an element of $SL^{1;\pm}_\mu$ by the measure $(a_2t+a_1)\cG^0(dt)\in SM_\mu$.

\end{proof}
In the following Lemma, by a slight abuse of notation, we use the same label $\psi_\pm$ for the characteristic exponent 
defined by the generating triplet $(\sg^2,b,f_+(x)dx)$, where $f_\pm(x)=f_\pm(\cG;x)$ are defined by 
\eq{reprfp1}-\eq{reprfm1} with $\cG_\pm=\cG(a_2,a_1,\cG^0)$. In the case $a_2=0$, we use the L\'evy-Khintchine formula for jump component of finite variation. The proof of the lemma is immediate from Theorem \ref{thm1:SL}, Lemma  \ref{STliminf} and Proposition \ref{prop:analSM}.

\begin{lem}\label{lem:analinf}
\begin{enumerate}[(a)]
\item
 $\psi_+$ is analytic in $\bC\setminus i(-\infty,-\mu]$, $\psi_-$ is analytic in $\bC\setminus i[\mu,+\infty)$;
 \item
$\forall$\ $\ga\in (0,\pi)$, 
$\psi_\pm(\xi)\sim \frac{\sg^2}{2}\xi^2$ as $(\pm i\cC_\ga\ni)\xi\to \infty$, uniformly in $\mathrm{arg}\,\xi\in [-\ga,\ga]$. 
\item
If  $\sg^2=a_2=0$, then, $\forall$\ $\ga\in (0,\pi)$, 
$\psi_\pm (\xi)\sim -ib\xi$ as $(\pm i\cC_\ga\ni)\xi\to\infty$, uniformly in $\mathrm{arg}\,\xi\in [-\ga,\ga]$ \end{enumerate}

\end{lem}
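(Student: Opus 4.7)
The plan is to read off all three assertions directly from the explicit formula for the characteristic exponent supplied by Theorem \ref{thm1:SL}(a), combined with the two basic analytic facts about Stieltjes transforms already collected in Proposition \ref{prop:analSM} and Lemma \ref{STliminf}. Since the lemma reuses the symbol $\psi_\pm$ for the full triplet $(\sg^2, b, f_\pm(x)dx)$, the working identity is
\begin{equation*}
\psi_\pm(\xi) \;=\; \tfrac{\sg^2}{2}\xi^2 \;-\; ib\xi \;+\; (a_2\xi^2 \mp i a_1\xi)\, ST(\cG^0)(\mp i\xi) \;\pm\; i\mu\xi,
\end{equation*}
with $\mu = 0$ in the finite-variation case $a_2 = 0$ (by Theorem \ref{thm1:SL}(a)). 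The remaining work is just transporting the properties of $ST(\cG^0)$ through the substitution $z = \mp i\xi$.

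For part (a), I would invoke Proposition \ref{prop:analSM}, which states $ST(\cG^0)$ is analytic on $\bC \setminus (-\infty,-\mu]$. Under $z = -i\xi$ the excluded ray $(-\infty,-\mu]$ in $z$ pulls back to $i(-\infty,-\mu]$ in $\xi$, and under $z = i\xi$ it pulls back to $-i(-\infty,-\mu] = i[\mu,+\infty)$. The polynomial prefactors $a_2\xi^2 \mp ia_1\xi$ and the remaining quadratic/linear terms are entire in $\xi$, so the domains of analyticity stated for $\psi_\pm$ follow. For part (b), fix $\ga \in (0,\pi)$ and let $\xi$ tend to infinity in $\pm i \cC_\ga$; then $\mp i\xi \in \cC_\ga$, and Lemma \ref{STliminf} gives $ST(\cG^0)(\mp i\xi) \to 0$ uniformly in $\arg\xi \in [-\ga,\ga]$. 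Therefore
\begin{equation*}
(a_2\xi^2 \mp ia_1\xi)\, ST(\cG^0)(\mp i\xi) \;=\; o(|\xi|^2),
\end{equation*}
the drift and $\pm i\mu\xi$ terms are $O(|\xi|)$, and the dominant surviving term is the Brownian contribution $\tfrac{\sg^2}{2}\xi^2$. Part (c) is the same argument one notch lower: with $\sg^2 = a_2 = 0$ the quadratic term disappears, the jump piece is of finite variation so $\mu = 0$, and the remaining jump contribution $(\mp i a_1 \xi)ST(\cG^0)(\mp i\xi) = o(|\xi|)$ is dominated by $-ib\xi$.

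There is no genuine obstacle here; the statement is essentially a direct translation of the Stieltjes-transform facts through the formula of Theorem \ref{thm1:SL}(a). The only subtlety worth double-checking is the bookkeeping in part (c): one must notice that the hypothesis $a_2 = 0$ triggers the finite-variation convention in Theorem \ref{thm1:SL}(a), which kills the extra $\pm i\mu\xi$ that would otherwise spoil the clean asymptotic $\psi_\pm(\xi) \sim -ib\xi$. Everything else is routine uniform-convergence bookkeeping on the rotated cone $\pm i\cC_\ga$.
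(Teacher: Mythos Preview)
Your proof is correct and follows exactly the approach the paper itself takes: the paper's entire proof is the single sentence ``The proof of the lemma is immediate from Theorem \ref{thm1:SL}, Lemma \ref{STliminf} and Proposition \ref{prop:analSM},'' and you have accurately spelled out how those three ingredients combine. Your bookkeeping on the pull-back of the cut $(-\infty,-\mu]$ under $z=\mp i\xi$ and the observation that $a_2=0$ forces $\mu=0$ (so the $\pm i\mu\xi$ term does not interfere in part (c)) are exactly the details the paper leaves implicit.
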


 \begin{defin}\label{def:sSLprocess} Let $\mum\le0\le\mup$.
 A L\'evy process $X$ is called a signed Stieltjes-L\'evy process (sSL-process) of class $sSL_{\mum,\mup}$ if 
 the L\'evy density of $X$ is of the form 
 \bbe\label{LdenssSL}
 f(x)=\bfo_{(-\infty,0)}(x) \int_{(0,+\infty)}e^{tx}\cG_-(dt)+\bfo_{(0,+\infty)}(x) \int_{(0,+\infty)}e^{-tx}\cG_+(dt),
\ee
where $\cG_-\in \sSLM_{\mup}$ and $\cG_+\in \sSLM_{-\mum}$. If $\cG_-\in \SLM_{\mup}$ and $\cG_+\in \SLM_{-\mum}$, $X$ is called a  Stieltjes-L\'evy process (SL-process) 
of class $SL_{\mum,\mup}$.
\end{defin}
The following theorem is immediate from Definition \ref{def:sSLprocess} and Lemma \ref{lem:analinf}.
\begin{thm}\label{thm:SL-sSLchexp} Let $X\in sSL_{\mum,\mup}$. 
Then 
\begin{enumerate}[(i)]
\item
the characteristic exponent $\psi$ of $X$ is of the form \eq{eq:sSLrepr}, where $\cG^0_+\in SM_{-\mum}, \cG^0_+\in SM_{\mup}$;
\item
$\psi$ admits analytic continuation to $\bC\setminus i((-\infty, \mum]\cup [\mup,+\infty))$;
\item
$\forall$\ $\ga\in (0,\pi/2)$, $\psi(\xi)\sim \frac{\sg^2}{2}\xi^2$ as $\xi\to\infty$, uniformly in
$\mathrm{arg}\,\xi\in[-\ga,\ga]\cup [\pi-\ga,\pi+\ga]$;
\item
if $\sg^2=a^+_2=a^-_2=0$, then, for any $\ga\in (0,\pi/2)$, $\psi(\xi)\sim -ib\xi$ as $\xi\to\infty$, uniformly in
$\mathrm{arg}\,\xi\in[-\ga,\ga]\cup [\pi-\ga,\pi+\ga]$.\end{enumerate}
\end{thm}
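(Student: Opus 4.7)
The plan is to derive all four claims directly from the L\'evy--Khintchine formula together with Theorem \ref{thm1:SL}(a), Proposition \ref{prop:analSM}, Lemma \ref{STliminf}, and Lemma \ref{lem:analinf}; the assertion is essentially a bookkeeping exercise on top of those results. First, I would split $X$ into its BM, drift, and two one-sided pure-jump components, so that
\[
\psi(\xi)=\tfrac{\sg^2}{2}\xi^2-i\mu\xi+\psi_+(\xi)+\psi_-(\xi),
\]
where $\psi_\pm$ is the characteristic exponent of $X^\pm_{\cG_\pm}$ in the sense of Definition \ref{def:Xpm}. By Definition \ref{def:sSLmeasure}, $\cG_+=\cG_{+,1}-\cG_{+,2}$ with $\cG_{+,j}\in\SLM_{-\mum}$ and $\cG_-=\cG_{-,1}-\cG_{-,2}$ with $\cG_{-,j}\in\SLM_{\mup}$; after inflating the coefficients to common $a^\pm_2,a^\pm_1\ge 0$ (e.g.\ the maxima of the two pairs) one can factorize $\cG_\pm(dt)=(a^\pm_2 t^2+a^\pm_1 t)\cG^0_\pm(dt)$, where $\cG^0_\pm$ is now a signed measure whose positive and negative parts lie in $\SM_{-\mum}$, respectively $\SM_{\mup}$. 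Applying Theorem \ref{thm1:SL}(a) to the two SL pieces separately, combining linearly (the Stieltjes transform extends linearly to signed measures) and absorbing the two auxiliary drifts $\pm\mu_\pm$ into $\mu$, one obtains \eq{eq:sSLrepr}, proving (i).

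For (ii), Proposition \ref{prop:analSM} applied to the positive and negative parts of $\cG^0_\pm$ shows that $ST(\cG^0_+)$ and $ST(\cG^0_-)$ are analytic on $\bC\setminus(-\infty,\mum]$ and $\bC\setminus(-\infty,-\mup]$ respectively; the substitutions $z=\mp i\xi$ transform these cuts into $i(-\infty,\mum]$ and $i[\mup,+\infty)$, and the polynomial prefactors together with the BM and drift terms are entire, so $\psi$ extends analytically to $\bC\setminus i((-\infty,\mum]\cup[\mup,+\infty))$. Parts (iii) and (iv) follow from Lemma \ref{lem:analinf}(b),(c) applied to each SL summand in the decomposition, together with Lemma \ref{STliminf}: in the stated cones $ST(\cG^0_\pm)(\mp i\xi)\to 0$, so when $\sg^2>0$ the entire pure-jump contribution is $o(\xi^2)$ and $\tfrac{\sg^2}{2}\xi^2$ dominates; when $\sg^2=a^+_2=a^-_2=0$ the prefactors collapse to $\mp i a^\pm_1\xi$, giving $o(\xi)$ corrections to $-i\mu\xi$ and hence the linear asymptotics $-ib\xi$ for a combined drift $b$.

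The only non-mechanical step is the sSL bookkeeping for (i): extracting a single common polynomial prefactor $(a^\pm_2 t^2+a^\pm_1 t)$ from the two summands of $\cG_\pm=\cG_{\pm,1}-\cG_{\pm,2}$, verifying that the resulting signed $\cG^0_\pm$ really encodes $\psi_\pm$ via its (linearly extended) Stieltjes transform, and tracking the drift contributions so that \eq{eq:sSLrepr} features a single scalar $\mu$. Once that is done, (ii)--(iv) are immediate consequences of the analyticity and decay properties of Stieltjes transforms and of Lemma \ref{lem:analinf}.
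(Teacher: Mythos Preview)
Your proposal is correct and is essentially the same as the paper's argument: the paper states that the theorem is immediate from Definition \ref{def:sSLprocess} and Lemma \ref{lem:analinf} (the latter being itself derived from Theorem \ref{thm1:SL}, Lemma \ref{STliminf}, and Proposition \ref{prop:analSM}), and you have simply unpacked this chain explicitly, including the sSL bookkeeping of decomposing $\cG_\pm$ into SL pieces and recombining linearly. There is no substantive difference in approach.
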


\subsection{Representations of  sSL-measures in terms of the characteristic exponent}\label{ss:reprfpm_SL}

 Let 
  $\cG_+=\cG_{+;+}-\cG_{+;-}$, $\cG_{+;\pm}\in \SLM_{-\mum}$ 
 and  $\cG_-=\cG_{-;+}-\cG_{-;-}$, $\cG_{-;\pm}\in \SLM_{\mup}$
 be the Jordan decompositions of measures $\cG_\pm$ in \eq{LdenssSL} Denote by $U_{+;\pm}$ the set of points of continuity of 
 $t\mapsto \cG_{+;\pm}(-\infty,t)$ and set $U_+=U_{+;+}\cap U_{+,-}$. Similarly, define $U_-$.
 \begin{thm}\label{thm:repr_sSL} Let $X$ be of class $sSL_{\mum,\mup}$, with
 the L\'evy density \eq{LdenssSL} and characteristic exponent $\psi$. Then
 \begin{enumerate}[(a)]

\item
for any $\theta\in (\mum,\mup)$ and $x>0$,
 \beqa\label{reprfpsSL}
  f_\pm(\pm x)&=&\frac{1}{\pi}\lim_{\eps\to 0+}\int_{-\theta}^{+\infty} e^{-t x}\Im\psi(\mp(it+\eps))dt;
 \eqa
  \item
for any  $u,v\in U_\pm$,
\bbe\label{reprcGp}
\cG_\pm((u,v])=\lim_{\eps\to 0+}\frac{1}{\pi}\int_{u}^{v} \Im \psi(\mp(it+\eps))dt;
\ee
 \item
 if $\cG_\pm(\{\mp\mu_\mp\})=0$, 
  then \eq{reprfpsSL} holds with $\mp\mu_\mp$ in place of $-\theta$.

 \end{enumerate}
 \end{thm}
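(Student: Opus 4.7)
The plan is to insert $\xi = \mp(it+\eps)$ into the representation \eqref{eq:sSLrepr} of $\psi$ (available from Theorem \ref{thm:SL-sSLchexp}), isolate the single term whose Stieltjes transform has its cut on the relevant imaginary half-axis, and read off $\cG_\pm$ from the jump of that term via Sokhotski--Plemelj (equivalently Lemma \ref{corr_lim_CBF}). Parts (b) and (a) are then, respectively, the bare inversion formula and its Laplace-test dressed up; part (c) is the boundary case where no atom sits at the corner of the cut. I focus on the $+$-sign; the $-$-sign version follows by symmetry.

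\textbf{Identifying the dominant jump.} Take $\xi=-(it+\eps)$, so $-i\xi=-t+i\eps$ and $i\xi=t-i\eps$. For $t>-\mum$, the point $-t+i\eps$ approaches the cut $(-\infty,\mum]$ of $ST(\cG^0_+)$ from above, while $t-i\eps$ stays away from the cut $(-\infty,-\mup]$ of $ST(\cG^0_-)$ whenever $t>-\mup$. The ``other'' pieces contribute only $O(\eps)$ to $\Im\psi$: the prefactor $a^+_2\xi^2-ia^+_1\xi$ tends to the real number $-(a^+_2t^2+a^+_1t)$; the factor $a^-_2\xi^2+ia^-_1\xi$ is real$+O(\eps)$ and is multiplied by a quantity with real limit; and the terms $(\sg^2/2)\xi^2$, $-i\mu\xi$ have imaginary parts $-\sg^2 t\,\eps$ and $\mu\eps$, which are uniformly $O(\eps)$ on compacts and, when weighted by $e^{-tx}$, yield a vanishing contribution after integration. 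Hence, in the sense of tempered distributions on $t>-\mum$,
\[
\tfrac{1}{\pi}\Im\psi(-(it+\eps))\;=\;-\tfrac{1}{\pi}(a^+_2t^2+a^+_1t)\,\Im ST(\cG^0_+)(-t+i\eps)\;+\;o(1).
\]
By the Poisson-kernel identity $-\Im ST(\cG^0_+)(-t+i\eps)=\int\eps\,\cG^0_+(du)/((u-t)^2+\eps^2)$ and Lemma \ref{corr_lim_CBF} (or a direct Fubini--arctan calculation) the right-hand side converges weakly to $(a^+_2t^2+a^+_1t)\cG^0_+(dt)=\cG_+(dt)$. In the signed case, we apply this to each piece of the Jordan decomposition $\cG_+=\cG_{+;+}-\cG_{+;-}$ and subtract.

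\textbf{From the inversion to parts (a), (b), (c).} Part (b) is immediate: testing the weak limit against $\mathbf{1}_{(u,v]}$ for $u,v\in U_+$ gives $\cG_\pm((u,v])=\lim_{\eps\to 0+}\tfrac{1}{\pi}\int_u^v\Im\psi(\mp(it+\eps))\,dt$. For part (a), I test against $g(t)=e^{-tx}\mathbf{1}_{[-\theta,\infty)}(t)$ with $x>0$; on $(-\theta,-\mum)$ the integrand is $O(\eps)$ uniformly, and on $[-\mum,\infty)$ the weak convergence applies. To promote weak convergence to the required convergence of integrals against $e^{-tx}$ over an unbounded range, I invoke dominated convergence using (i) the crude polynomial bound $|\psi(\xi)|\le C(1+|\xi|)^2$ guaranteed by the order $\le 2$ from Lemma \ref{lem:analinf}, combined with (ii) the bound $|\Im ST(\cG^0_+)(-t+i\eps)|\le\int(1+u)^{-1}\cG^0_+(du)\cdot t$ for $t$ bounded away from $-\mum$; the factor $e^{-tx}$ kills any polynomial growth at $+\infty$. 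Part (c) reduces to the observation that when $\cG_\pm(\{\mp\mu_\mp\})=0$, the point $\mp\mu_\mp$ belongs to $U_\pm$, and the integral over $(-\theta,\mp\mu_\mp)$ tends to $0$ by the $O(\eps)$ bound on $\Im\psi$ off the cut together with bounded length.

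\textbf{Main obstacle.} The conceptual content is the standard Stieltjes/Plemelj inversion, so the only real work is the dominated-convergence step for part (a): one needs uniform bounds on $\Im\psi(-(it+\eps))$ for all small $\eps>0$ and all $t\in[-\theta,\infty)$, simultaneously controlling the genuinely singular contribution near $t=-\mum$ (handled by the Poisson kernel, which is a bounded approximate identity in $L^1$) and the growth at $t\to+\infty$ (handled by the quadratic bound on $\psi$ against the exponentially decaying test function $e^{-tx}$). Once these estimates are in place, (a)--(c) follow mechanically from the weak inversion identity and, in the sSL case, linearity in the Jordan decomposition.
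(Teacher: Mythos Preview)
Your approach is genuinely different from the paper's. The paper starts from the contour-integral representation $f_+(x)=-\frac{1}{2\pi}\int_{\cL^-}e^{-ix\xi}\psi_+(\xi)\,d\xi$ of Lemma~\ref{lem:repr_psi} and deforms $\cL^-$ into five pieces wrapping the cut $i(-\infty,\mum]$; the contributions from the rays and horizontal segments are killed by sending the truncation height $A\to\infty$ (uniformly in $\eps$), and only then is Lemma~\ref{corr_lim_CBF} applied to the piece hugging the cut. You instead substitute $\xi=-(it+\eps)$ directly into \eqref{eq:sSLrepr} and invoke Plemelj--Sokhotski. Both routes end at Lemma~\ref{corr_lim_CBF}; yours is more elementary in that it avoids Lemma~\ref{lem:repr_psi} and the contour bookkeeping, while the paper's route buys cleaner control of the tail at $t\to+\infty$.

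There is, however, a genuine gap in your dominated-convergence step. Your bound (i), $|\psi(\xi)|\le C(1+|\xi|)^2$, is \emph{not} available uniformly in $\eps$ along $\xi=-(it+\eps)$: Lemma~\ref{lem:analinf} only gives the asymptotic in cones $i\cC_\ga$ with $\ga<\pi$, i.e.\ strictly away from the cut. If $\cG^0_+$ has an atom at $u_0$ (as in HEJD or any meromorphic model), then $ST(\cG^0_+)(-u_0+i\eps)\sim -i\cG^0_+(\{u_0\})/\eps$, so $|\psi(-iu_0-\eps)|\sim C u_0^2/\eps\to\infty$, and bound (ii) fails for the same reason. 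Thus you cannot dominate $\Im\psi(-(it+\eps))$ pointwise in $t$ uniformly in $\eps$.

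The fix within your framework is to apply Fubini \emph{before} the limit: write the main contribution as $\int_{-\theta}^\infty e^{-tx}(a^+_2t^2+a^+_1t)\int\frac{\eps\,\cG^0_+(du)}{(u-t)^2+\eps^2}\,dt$, swap the order, and note that for each $u$ the inner $t$-integral is a Poisson-kernel convolution of the bounded continuous function $t\mapsto e^{-tx}(a^+_2t^2+a^+_1t)$, hence converges to $\pi e^{-ux}(a^+_2u^2+a^+_1u)$; a short splitting argument then gives a $u$-integrable dominating function (decaying like $u^2e^{-ux/2}$), and dominated convergence in $u$ finishes part~(a). The paper's contour argument avoids this issue entirely because its rays $\cL^-_1,\cL^-_5$ stay at fixed angular distance from the cut, where the quadratic bound \emph{does} hold.
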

\begin{proof} We assume that $\cG_\pm \in \SLM_{\mp\mu_\mp}$; the statements for $\cG_\pm \in \sSLM_{\mp\mu_\mp}$ follow by linearity. By symmetry, it suffices to consider $f_+$ and $\cG_+$.
To prove
 \eq{reprfpsSL}, we use the representation
$\cG_+(dt)=(a_2 t^2+a_1 t)\cG^0_+(dt)$, where $\cG^0_+ \in \SM_{-\mum}$.
Since the characteristic exponent $\psi_-$ of negative jumps and the one of the Brownian motion component are analytic in
$\{\Im\xi<\theta\}$ and polynomially bounded,  we can use \eq{reprf(x)p} with $\psi_+$ given by \eq{eq:psip2}
in place of $\psi$. We fix $\ga\in (0,\pi/2)$, $A>-\mum$ and $\eps>0$,
and deform $\cL^-$ into the union of the following contours: $\cL^-_1=\{e^{i(\pi+\ga)}\rho\ |\ \rho \ge A/\sin \ga\}$; 
 $\cL^-_2=-iA+(-A\cot\ga,-\eps)$; $\cL^-_3=\{\xi\ |\ \mathrm{dist}\,(\xi, i[-A,\theta])=\eps, \Im\xi\ge -A\}$;
 $\cL^-_4=-iA+(\eps,A\cot\ga)$; $\cL^-_5=\{\xi=e^{-i\ga}\rho\ |\ \rho \ge A/\sin \ga\}$. 
On the strength of Theorem \ref{thm:SL-sSLchexp},
 the integrals over $\cL^-_1$ and $\cL^-_5$ tend to 0 as $A\to  +\infty$ (recall that $x>0$). 
 
 Next,  we note that  there exists $C>0$ independent of $R$ and $\eps$ such that 
  \beqast
  \left| \left(\int_{\cL^+_2}+\int_{\cL^+_4}\right)e^{-ix\xi}\psi(\xi)d\xi\right|
&\le& C e^{-xA}\int_{(-\mum,+\infty)}\cG^0_+(dt)(a_2t^2+a_1t)\\
&&\times \left|\left( \int_{-A\cot\ga}^{-\eps}+\int_{\eps}^{A\cot\ga} \right)\frac{dy}{t-A+iy}\right|
\eqast
Straightforward  calculations prove that $\left( \int_{-A}^{-\eps}+\int_{\eps}^A \right)\frac{dy}{t-A+iy}$ is uniformly bounded w.r.t.
$t,A,\eps$, and bounded by $C/(A+t)$ on $(-\mum+2A,+\infty)$, uniformly in $\eps>0$. Taking into account that $\cG^0_+\in \SM_0$ and $x>0$,  we conclude that
the sum of integrals over $\cL^-_2$ and $\cL^-_4$ tends to 0 as $A\to  +\infty$, uniformly in $\eps>0$.

It remains to consider the limit of the integral over $\cL^+_3=\cL^+_3(A,\eps)$ as $\eps\to 0$ and $A$ fixed. Since $\overline{\psi(\xi)}=\psi(-\bar\xi)$ and $\psi$ is real and continuous at any point of $i(\mum,\theta)$, we have
\beqast
&&-\frac{1}{2\pi}\lim_{\eps\to 0+}\int_{\cL^+_3(A,\eps)}e^{-ix\xi}\psi_+(\xi)d\xi\\
&=&\frac{1}{2\pi}\lim_{\eps\to 0+}\left(\int_{-A}^{\theta} e^{(t+i\eps) x} (-\psi_+(it-\eps)i dt
+\int_{-A}^{\theta} e^{(t-i\eps) x}\psi_+(it+\eps))i dt
\right)\\
&=&\frac{1}{\pi}\lim_{\eps\to 0+}\int_{-\theta}^A e^{-t x}(1+g_1(t,\eps)) \Im\psi_+(-it-\eps)dt\\
&=&\frac{1}{\pi}\lim_{\eps\to 0+}\int_{-\theta}^A e^{-t x} e^{-tx}(1+g(t_2,\eps))(a_2t^2+a_1t)\Im ST(\cG^0_+)(-it-\eps)dt,
\eqast
where $g_j(t,\eps), j=1,2,$ are continuous, and $g_j(t,x)=o(\eps), j=1,2,$ as $\eps\to 0$, uniformly in $t\ge 0$.
We can choose $A$ to be a point of continuity of $t\mapsto \cG^0_+(\theta, t)$. Next, if $\mum<0$, we can choose
$\theta\in (\mum, 0)$. Then, by Lemma \ref{corr_lim_CBF},
the limit above is 
\[
\int_{-\theta}^{A} e^{- tx}(a_2t^2+a_1t)\cG^0_+(dt)=\int_{-\theta}^{A} e^{- tx}\cG_+(dt).
\]
We have proved that $f_+(x)-\int_{-\theta}^{A} e^{- tx}\cG_+(dt)\to 0$ as $(U_+\ni) A\to +\infty$, hence, \eq{reprfpsSL} holds.
Using the Esscher transform, we can reduce the case $\mum=0$ to the case $\mum<0$.
The argument above demonstrates that  the Laplace transform of $\cG_+$ and the Laplace transform of the measure
defined by the RHS of \eq{reprcGp} coincide, which proves \eq{reprcGp}.
If $\cG_+(\{-\mum\})=0$, equivalently, $\psi$ does not have a pole at $i\mum$, then we use \eq{lim_mum} to conclude 
 that we can pass to the limit $\theta\downarrow \mum$ in the proof of \eq{reprfpsSL} and \eq{reprcGp}.

\end{proof}
The  following verification theorem allows us to prove that stable L\'evy processes, KoBoL, VGP, NIG, NTS 
and Hyperbolic processes
are SL-processes whereas Meixner processes are sSL-processes but not SL-processes; the conditions of the theorem can be relaxed.

\begin{thm}\label{thm:SLverification}
Let the characteristic exponent $\psi$ of a L\'evy process satisfy the following conditions:
\begin{enumerate}[(i)]
\item
$\psi$ is analytic in $\bC\setminus i((-\infty,\mum]\cup [\mup,+\infty))$, where $\mum\le0\le \mup$, and $\mum<\mup$;
\item
$\exists$ $\nu<2$, $\de>0$ and $C>0$ s.t. $\forall$\ $\xi\in \bC\setminus i((-\infty,\mum]\cup [\mup,+\infty))$, 
the characteristic exponent of the jump component admits the bound
\bbe\label{upbound}
|\psi_J(\xi)|\le C(|\xi|^\nu+|\xi-i\mum|^{-1+\de}+|\xi-i\mup|^{-1+\de});
\ee
\item
$\forall$\ $\be\in (-\infty,\mum)\cup (\mup,+\infty)$, the limit $\Im\psi(i\be+ 0)$ exists, a.e.

\end{enumerate}
Then 
\begin{enumerate}[(a)]
\item
$\cG_\pm (dt)=\frac{1}{\pi}\Im\psi(\mp(it+ 0))dt$  are sSL-measures;
\item
$X$ is a sSL-process, with the L\'evy density given by \eq{LdenssSL};

\item if $\Im\psi(it+ 0)\ge 0$ for $t>\mup$, and $\Im\psi(-it- 0)\ge 0$ for $t>-\mum$,
then $\cG_\pm$ are SL-measures, and $X$ is an SL-process.
\end{enumerate}
\end{thm}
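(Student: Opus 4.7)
The plan is to invert the derivation used in Theorem \ref{thm:repr_sSL}: start from a contour representation of the L\'evy density, deform the contour onto the cut on the imaginary axis, and read off the sSL-measures from the boundary values of $\psi$ across the cut.

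First, applied to $\psi_J=\psi-(\sg^2/2)\xi^2+i\mu\xi$, condition (ii) is exactly what is needed to invoke Lemma \ref{lem:repr_psi}; the BM and drift pieces are entire and contribute nothing when integrated against $e^{-ix\xi}$ on a downward-wing contour. This gives
\[
f(x) = -\frac{1}{2\pi}\int_{\cL^-} e^{-ix\xi}\psi(\xi)\,d\xi,\quad x>0,
\]
and the symmetric formula over $\cL^+$ for $x<0$.

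I would then deform $\cL^-$ into the five-piece contour $\cL^-_1\cup\cdots\cup\cL^-_5$ from the proof of Theorem \ref{thm:repr_sSL}: two oblique rays off to infinity in the lower half-plane, two horizontal segments at $\Im\xi=-A$, and a keyhole $\cL^-_3$ at distance $\eps$ around the segment $i[-A,\theta]$ of the cut, for some $\theta\in(\mum,0]$. The polynomial bound $|\psi(\xi)|\le C|\xi|^\nu$ from (ii), combined with the exponential decay of $e^{-ix\xi}$ in the lower half-plane for $x>0$, makes the contributions of $\cL^-_1,\cL^-_2,\cL^-_4,\cL^-_5$ vanish as $A\to+\infty$. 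On $\cL^-_3$, the Schwarz reflection $\psi(-\bar\xi)=\overline{\psi(\xi)}$ collapses the two vertical sides of the keyhole to a single jump $-2i\Im\psi(\eps+it)$; condition (iii) supplies the boundary limit as $\eps\to 0$, and the substitution $s=-t$ turns the cut contribution into $\frac{1}{\pi}\int_{-\theta}^{A} e^{-sx}\Im\psi(-is-0)\,ds$. Sending $A\to+\infty$ and $\theta\downarrow\mum$ --- directly when $\psi$ has no pole at $i\mum$, otherwise by invoking Lemma \ref{lim_0} to dispose of the small arc around $i\mum$ --- produces
\[
f_+(x) = \frac{1}{\pi}\int_{-\mum}^{+\infty} e^{-sx}\Im\psi(-is-0)\,ds,
\]
with the symmetric identity for $f_-$. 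Comparing with \eq{reprfp1}--\eq{reprfm1} identifies $\cG_\pm(dt)=\frac{1}{\pi}\Im\psi(\mp(it+0))\,dt$ as the candidate sSL-measures and shows $X$ has L\'evy density \eq{LdenssSL}.

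To finish (a) and (b) one must verify $\cG_\pm\in\sSLM_{\mp\mu_\mp}$. The Jordan decomposition $\cG_\pm=\cG_{\pm;+}-\cG_{\pm;-}$ produces a pair of non-negative measures; writing each as $\cG_{\pm;j}(dt)=(t^2+t)\cG^0_{\pm;j}(dt)$ and using (ii) to dominate the density by $C(t^\nu+(t+\mum)^{-1+\de})$, the hypotheses $\nu<2$ and $\de>0$ give $\int(1+t)^{-1}\cG^0_{\pm;j}(dt)<\infty$, so $\cG^0_{\pm;j}\in\SM_{\mp\mu_\mp}$ and hence $\cG_{\pm;j}\in\SLM_{\mp\mu_\mp}$. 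Non-negativity of the Laplace transform of $\cG_\pm$ is automatic since it equals the L\'evy density $f_\pm\ge 0$. Part (c) is then immediate: under the sign hypothesis the Jordan decomposition is trivial ($\cG_{\pm;-}=0$), so $\cG_\pm\in\SLM_{\mp\mu_\mp}$ and $X\in SL_{\mum,\mup}$. The principal obstacle is controlling the endpoints $i\mum,i\mup$ cleanly: the clauses $|\xi-i\mum|^{-1+\de}$ and $|\xi-i\mup|^{-1+\de}$ in (ii) are calibrated precisely so that the small arcs in the contour deformation contribute negligibly and the boundary density is locally integrable against the Stieltjes weight; the remaining steps are a careful but routine variation on the argument used for Theorem \ref{thm:repr_sSL}.
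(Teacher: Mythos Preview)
Your proposal is correct and follows essentially the same route as the paper: part (a) via the reflection identity $\overline{\psi(\xi)}=\psi(-\bar\xi)$ together with the bound \eqref{upbound}, part (b) by rerunning the contour-deformation argument of Theorem~\ref{thm:repr_sSL} with \eqref{upbound} and dominated convergence supplying the needed control, and part (c) as an immediate consequence. The only cosmetic difference is ordering---the paper first notes that the densities $t\mapsto\tfrac{1}{\pi}\Im\psi(\mp(it+0))$ inherit the bound \eqref{upbound} (which settles (a) directly) and then appeals to the deformation for (b), whereas you run the deformation first to identify $\cG_\pm$ and then verify the Stieltjes condition---but the substance is the same.
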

\begin{proof}
(a) It suffices to notice that $\overline{\psi(\xi)}=\psi(-\bar\xi)$, and  functions $t\mapsto (1/\pi)\Im\psi(\pm(it+0))$ are measurable and satisfy the bound \eq{upbound}.
(b) The proof is a straightforward simplification of the proof of Theorem \ref{thm:repr_sSL}, the bound \eq{upbound}
and the dominated convergence theorem being used.
(c) is immediate from (a) and the definition of sSL- and SL-measures.
\end{proof}

\begin{example}\label{ex:NTSdirect0}{\rm 
Let $\psi=\psi^0$ be given by \eq{NTS2} (the case of NIG and  NTS processes).   Clearly, $\psi(\xi)$
admits the bound \eq{upbound} with $\de=1$. Next, we represent $\psi$ 
in the form
 \begin{equation}\label{NTS30}
\psi(\xi)=\de[(\al-\be-i\xi)^{\nu/2}(\al+\be+i\xi)^{\nu/2}-(\al^2-\be^2)^{\nu/2}],
\end{equation}
and derive
\beqa\label{NTSpsiJp}
 \Im\psi(it+0)&=&\de\sin(\pi\nu/2) ((t-\be)^2-\al^2)^{\nu/2}, \ t>\mup:=\al+\be,\\\label{NTSpsiJm}
\Im\psi(it-0)&=&\de\sin(\pi\nu/2) ((t+\be)^2-\al^2)^{\nu/2}, \ t<\mum:=-\al+\be.
 \eqa Hence, $X$ is an SL process, which is a mixture of two independent one-sided SL-processes.  

}
\end{example}

\begin{example}\label{ex:KoBoLdirect0}{\rm 
Let $\psi=\psi^0$ be given by \eq{KBLnupnumneq01} (the case of KoBoL with the positive and negative
densities of order $\nu_\pm\in(0,2)\setminus\{1\}$).  Essentially the same argument as in Example \ref{ex:NTSdirect0} proves that  $\psi$ is the characteristic exponent of an SL process, and
\beqa\label{KBLpsiJ02p}
 \Im\psi(it+0)&=& -c_-\Ga(-\num)\sin(\pi\num)(t-\lp)^{\num}, \ t>\lp,\\\label{KBLpsiJ02m}
\Im\psi(-it-0)&=& -c_+\Ga(-\nup)\sin(\pi\nup)(t+\lm)^{\nup}, \ t>-\lm.
 \eqa 
 If $\nu_\pm=1$, then, using \eq{KBL1p}-\eq{KBL1m}, we derive 
 \beqa\label{KBLpsiJ1p}
 \Im\psi(it+0)&=& \pi c_-(t-\lp), \ t>\lp,\\\label{KBLpsiJ1m}
\Im\psi(-it-0)&=& \pi c_+(t+\lm), \ t>-\lm.
\eqa
}
\end{example}

\begin{example}\label{ex:VGPdirect0}{\rm 
Let $\psi=\psi^0$ be given by \eq{KBL_0+}.  We have 
\bbe\label{VGpsi}
\Im\psi(it\pm 0)=\pi c_\mp, \  \pm t>\pm \la_\pm,\ee
and  essentially the same argument as in Example \ref{ex:NTSdirect0} proves that  $\psi$ is the characteristic exponent of an SL-regular process. The only difference is that $\psi$ has the logarithmic singularities at $\la_\pm$, hence, the bound \eq{upbound} holds
with $\de\in (0,1)$.
}\end{example}

\begin{example}\label{ex:Meixnerdirect0}{\rm 
Let $\psi^0$ be given by \eq{eq:Meixner_ch_exp}.  For $\eps\neq 0$, and $t\in \bR$, we have 
\beqast
\Im\psi^0(it+\eps)&=&2\de\Im\ln[\cosh(a\eps/2+i(at-b)/2))]\\
&=&2\de\Im\ln\left[\frac{e^{a\eps/2}+e^{-a\eps/2}}{2}\cos\frac{at-b}{2}+i\frac{e^{a\eps/2}-e^{-a\eps/2}}{2}\sin\frac{at-b}{2}\right].
\eqast
If $\cos\frac{at-b}{2}>0$,  $\Im\psi(it\pm 0) =0$. If $\cos\frac{at-b}{2}<0$ and $\pm \sin\frac{at-b}{2}>0$,
$\Im\psi(it+ 0) =\pm 2\de \pi$ and $\Im\psi(it-0) =\mp 2\de \pi$. Hence, Meixner processes are sSL-processes but not SL-processes.
The sSL-measures are $\cG_\pm(dt)=g_\pm(t)dt$, where, for $t>0$,
\beqa\label{gp_Meixner}
g_+(t)&=&2\de\sum_{k=0}^{+\infty}\left(\bfo_{a^{-1}((4k+1)\pi+b,(4k+2)\pi+b)}(t)
-\bfo_{a^{-1}((4k+2)\pi+b,(4k+3)\pi+b)}(t)\right),\\
\label{gm_Meixner}
g_-(t)&=&2\de\sum_{-\infty}^{-1}\left(-\bfo_{a^{-1}((4k+1)\pi+b,(4k+2)\pi+b)}(-t)
+\bfo_{a^{-1}((4k+2)\pi+b,(4k+3)\pi+b)}(-t)\right).
\eqa
}\end{example}

The proof of the following theorem is essentially the same as the one of Theorem \ref{thm:SLverification}.
The difference is that we drop an assumption that $\psi$ is a characteristic exponent of a L\'evy process.
\begin{thm}\label{thm:SLverification2}
Let function $\psi$ satisfy the following conditions:
\begin{enumerate}[(i)]

\item
$\psi$ is analytic in $\bC\setminus i((-\infty,\mum]\cup [\mup,+\infty))$, where $\mum\le0\le \mup$, and $\mum<\mup$;
\item
$\psi(0)=0$ and $\overline{\psi(\xi)}=\psi(-\bar\xi)$, for all $\xi\in i(\mum,\mup)+(\bC\setminus i\bR)$;
\item
conditions (i)-(iii) of Theorem \ref{thm:SLverification} hold;

\item
the Laplace transforms of the measures 
$\cG_\pm(dt)=\frac{1}{\pi}\Im\psi(\mp(it+ 0))dt$ are non-negative.
\end{enumerate}
Then the conclusions of Theorem \ref{thm:SLverification} are valid.
\end{thm}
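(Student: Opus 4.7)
The plan is to adapt the proof of Theorem \ref{thm:SLverification} by identifying the one place where the hypothesis that $\psi$ is already a characteristic exponent was used --- namely the non-negativity of the candidate jump densities $f_\pm$ --- and replacing that input with the new hypothesis (iv). All of the analytic machinery (contour deformations, existence of boundary values, integrability bounds) used only the analyticity, symmetry and growth of $\psi$, so it transfers verbatim.

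I would first define the candidate measures $\cG_\pm(dt):=\frac{1}{\pi}\Im\psi(\mp(it+0))\,dt$. By condition (iii) (inherited from Theorem \ref{thm:SLverification}(iii)) these boundary values exist a.e., and by the symmetry in (ii) they yield real-valued measurable densities. The bound \eq{upbound} forces $\int (1+t)^{-3}|\cG_\pm|(dt)<\infty$, which is precisely \eq{intcondpm} with $m=3$, so the candidate jump densities
\[
f_\pm(\pm x) = \int_{(0,+\infty)} e^{-tx}\,\cG_\pm(dt), \qquad x>0,
\]
are well defined, and hypothesis (iv) gives $f_\pm\ge 0$. Together with the Jordan decomposition of $\cG_\pm$, this puts $\cG_\pm$ in $\sSLM_{\mp\mu_\mp}$ and proves (a). Next I would take $F(dx)$ as in \eq{LdenssSL} --- a bona fide L\'evy measure by Lemma \ref{lem:condcG1}(b) --- and let $X$ be the L\'evy process with generating triplet $(\sg^2,\mu,F)$ for any $\sg^2\ge 0$ and $\mu\in\bR$. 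Then Definition \ref{def:sSLprocess} puts $X$ in $sSL_{\mum,\mup}$, giving (b). For (c), the extra sign conditions make $\cG_\pm$ non-negative, and Proposition \ref{prop:equiv} together with \eq{upbound} supplies the decomposition $\cG_\pm(dt) = (a^\pm_2 t^2+a^\pm_1 t)\cG^0_\pm(dt)$ with $\cG^0_\pm\in \SM_{\mp\mu_\mp}$, placing $X$ in $SL_{\mum,\mup}$.

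The main obstacle is conceptual rather than technical: one must resist the temptation to also identify $\psi$ with the characteristic exponent of the constructed $X$. The theorem does not claim this identification, and establishing it in general would require matching the polynomial part of $\psi$ to the Brownian variance and drift of $X$ (in particular, verifying that the relevant quadratic coefficient is non-negative, for which neither condition (ii) nor \eq{upbound} is sufficient on its own). Since the stated conclusion only asserts the existence of an sSL-process with the prescribed L\'evy density, the construction above suffices, and every remaining analytic step carries over from the proof of Theorem \ref{thm:SLverification} once (iv) takes the place of the positivity of L\'evy densities that was previously automatic.
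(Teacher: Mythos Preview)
Your argument is precisely the paper's: the only place the proof of Theorem \ref{thm:SLverification} used that $\psi$ was already a characteristic exponent was to guarantee $f_\pm\ge 0$, and hypothesis (iv) now supplies that; everything else (boundary values, the integrability check via \eq{upbound}, Lemma \ref{lem:condcG1}(b), Definition \ref{def:sSLprocess}) transfers unchanged. The paper's proof is literally the single sentence that this is so, and your write-up is a faithful expansion of it.

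Where you diverge is the last paragraph. The Remark immediately following the theorem makes the intent explicit: the result is used to certify that the given $\psi$ \emph{is} a characteristic exponent, so conclusion (b) is meant as an identification of $\psi$ with the characteristic exponent of the constructed $X$ (up to a drift), not merely the existence of some sSL-process with the prescribed jump density. Your worry about a possibly negative Brownian variance rests on reading \eq{upbound} as a bound on a putative jump part $\psi_J$; but with no process given, the natural reading --- and the one used in every example following the theorem --- is that \eq{upbound} bounds $\psi$ itself, so $\psi$ has sub-quadratic growth and there is no quadratic piece to match. The identification then comes from observing that $\psi$ and the characteristic exponent of the constructed pure-jump $X$ have the same imaginary boundary values along both cuts (by construction and by Theorem \ref{thm:repr_sSL}(b) applied to $X$), so their difference extends to an entire function; the growth bounds together with condition (ii) force it to be $-i\mu\xi$ for some real $\mu$, which one absorbs into the drift. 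The paper folds this step into ``essentially the same'', and it is the one thing your argument declines to supply.
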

\begin{rem}{\rm
Theorem \ref{thm:SLverification2} allows one to prove that the functions $\psi$ in the examples above are
 characteristic exponents without knowing this fact in advance. In  the case of the Meixner processes,
condition (iv) follows from \eq{gp_Meixner}-\eq{gm_Meixner} and Proposition \ref{prop:suffsSLM}; in the other examples, the measures are non-negative, hence, (iv) is satisfied.
}
\end{rem}

 \subsection{Regular SL and sSL processes and distributions}\label{s:regST_and_sSSL}
\begin{defin}{\rm An sSL-process (resp., SL-process) $X$ is called regular sSL-process
(resp., regular SL-process)  if $X$ is SINH-regular.
}
\end{defin}
\begin{prop}\label{prop:SL_SINH} Let
$X\in sSL_{\mum,\mup}$ and \eq{lowerbound_gen} hold. Then $X$
 is SINH-regular of type   $((\mum,\mup);\bC\setminus i\bR; \cC_+)$, and there exists  $\nu\in [\nu',2]$
 such that the order of $X$ is $(\nu',\nu)$.
\end{prop}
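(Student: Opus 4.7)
The plan is to verify, in turn, each condition appearing in Definitions \ref{def:SINH_reg_proc_1D0} and \ref{def:SINH_reg_proc_1D}, using the representation \eq{eq:sSLrepr} of $\psi$ provided by Theorem \ref{thm:SL-sSLchexp} together with the asymptotic bound for Stieltjes transforms from Lemma \ref{STliminf}. The assumed lower bound \eq{lowerbound_gen} will give the lower order $\nu'$; the work will go into producing the upper order $\nu \le 2$.

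I would first dispose of the structural conditions. Condition (i) of Definition \ref{def:SINH_reg_proc_1D0} is built into the standing hypothesis $X\in sSL_{\mum,\mup}$ (recall $\mum\le 0\le \mup$, $\mum<\mup$). For condition (ii), I set $\cC:=\cC_{-\pi/2,\pi/2}=\bC\setminus i\bR$ and take $\cC_+:=\cC_{\gam',\gap'}$ with $\gam',\gap'$ being those supplied by \eq{lowerbound_gen}; the non-degeneracy $|\gam'|+\gap'>0$ is built into that assumption. Condition (iii) is the observation that a vertical shift by $\theta\in(\mum,\mup)$ sends $i\bR$ to itself and hence sends $\bC\setminus i\bR$ to itself, so
\[
i(\mum,\mup)+(\cC\cup\{0\}) \;\subseteq\; \bC\setminus i\bigl((-\infty,\mum]\cup[\mup,+\infty)\bigr),
\]
which is the domain of analyticity furnished by Theorem \ref{thm:SL-sSLchexp}(ii).

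The substantive step is the upper bound \eq{upperbound_gen}. Starting from \eq{eq:sSLrepr}, I would fix an arbitrary closed subcone $[\ga_{1,-},\ga_{1,+}]$ with $\gam<\ga_{1,-}<0<\ga_{1,+}<\gap$ and a compact vertical window $[\mumpr,\muppr]\subset(\mum,\mup)$, and argue geometrically that when $\xi$ ranges in $[\mumpr,\muppr]+\overline{\cC_{\ga_{1,-},\ga_{1,+}}}$ with $|\xi|$ sufficiently large, the points $-i\xi$ (respectively $i\xi$) lie in a closed subcone of $\bC\setminus(-\infty,0]$ of aperture strictly less than $\pi$. By Lemma \ref{STliminf} and Proposition \ref{prop:analSM}, $ST(\cG^0_\pm)(\mp i\xi)$ is then uniformly bounded on this region, so each of the four terms of \eq{eq:sSLrepr} contributes at most $O(|\xi|^2)$. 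This yields \eq{upperbound_gen} with $\nu=2$. Coupled with the assumed lower bound $\Re\psi(\rho e^{i\varphi})\ge c\rho^{\nu'}$ on $\cC_+$, and the trivial inequality $\Re\psi\le|\psi|$, we obtain $\nu'\le\nu\le 2$; taking $\nu$ to be the infimum of admissible upper orders gives some $\nu\in[\nu',2]$, as claimed. The main (and essentially only) obstacle is the geometric lemma in the previous sentence — i.e., showing that the image of a translated closed subcone of $\bC\setminus i\bR$ under $\xi\mapsto\mp i\xi$ stays uniformly inside a closed subcone of $\bC\setminus(-\infty,0]$ for large $|\xi|$ — but this follows from elementary plane geometry once one absorbs the vertical translation into an $O(1)$ angular perturbation that vanishes as $|\xi|\to\infty$.
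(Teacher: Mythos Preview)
Your proof is correct and follows essentially the same route as the paper. The paper's proof is a one-liner citing Lemma~\ref{lem:analinf} (which in turn is built from Theorem~\ref{thm1:SL}, Lemma~\ref{STliminf}, and Proposition~\ref{prop:analSM}); you have unpacked that citation by invoking the representation~\eq{eq:sSLrepr} via Theorem~\ref{thm:SL-sSLchexp} and applying Lemma~\ref{STliminf} directly to bound the Stieltjes-transform factors, which is exactly the content of Lemma~\ref{lem:analinf} in this context.
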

\begin{proof} By Lemma 
\ref{lem:analinf}, all conditions of Definition \ref{def:SINH_reg_proc_1D} are satisfied bar \eq{lowerbound_gen}.  

\end{proof}
\begin{example}\label{ex:reg_SL}{\rm  KoBoL, VGP, NIG and NTS 
are regular SL-processes whereas Meixner processes are regular sSL-processes.}
\end{example}
\begin{defin}\label{def:inf_div_distr_SL}
   A distribution $\rho$ on $\bR$ is called an SL (resp., sSL; SL-regular; sSL-regular) distribution if
    characteristic function of $\rho$ is of the form $e^{i\mu\xi-\psi^0(\xi)}$, where $\psi^0$ is the characteristic exponent of
    SL (resp., sSL; SL-regular; sSL-regular) process.
        \end{defin}
\begin{example}\label{ex:GenHyper_SL}{\rm In Section \ref{hyper_SL_calc}, we prove
     \begin{prop}\label{prop:GenHyper_SL} Let  $\sg(dx)$  a Generalized Hyperbolic distribution
 with the characteristic function  given by \eq{eq:ch_f_GHD}. Then
    \begin{enumerate}[(i)]
    \item
    $\sg(dx)$ is a regular sSL distribution;
    \item
    if $\la\in [-2,1]$, $\sg(dx)$ is a regular SL distribution;
    \item
    if $\la>1$, $\sg(dx)$ is not an SL distribution;
    \item
    if $\la<-2$, then, for sufficiently small $\de$ and/or $\al-|\be|$, $\sg(dx)$ is not an SL distribution.
    \end{enumerate}
    \end{prop}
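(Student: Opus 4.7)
The plan is to apply the verification Theorem~\ref{thm:SLverification2} to
\[
\psi^0(\xi) = \tfrac{\la}{2}\ln A(\xi) - \ln K_\la(\de\sqrt{A(\xi)}) + \mathrm{const}, \qquad A(\xi) := \al^2 - (\be+i\xi)^2,
\]
and then to compute the measures $\cG_\pm$ explicitly on the cuts $i(-\infty,\mum]$ and $i[\mup,\infty)$, with $\mum=\be-\al$ and $\mup=\be+\al$. First, I would verify the hypotheses of the theorem: since $K_\la$ has no zeros in $|\arg z|\le\pi/2$ for real $\la$ (DLMF~10.21.viii) and $\de\sqrt{A(\xi)}$ maps $\bC\setminus i((-\infty,\mum]\cup[\mup,\infty))$ into the right half-plane, $\psi^0$ is analytic on this cut plane. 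The classical asymptotic $K_\la(z)\sim\sqrt{\pi/(2z)}\,e^{-z}$ yields $\psi^0(\xi)\sim\de\sqrt{A(\xi)}\sim\de|\xi|$, which gives an upper bound of order $1$ for $|\psi^0|$ and a matching lower bound for $\Re\psi^0$ in any subcone of $\bC\setminus i\bR$; together with $\psi^0(0)=0$, the symmetry $\overline{\psi^0(\xi)}=\psi^0(-\bar\xi)$, and the non-negativity of the Laplace transforms of $\cG_\pm$ (which follows from the infinite divisibility of GH, hence the non-negativity of its L\'evy density), this proves (i) with type and order as in Example~\ref{ex:gen_hyper}.

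Second, using Theorem~\ref{thm:repr_sSL}, I would compute $\cG_\pm$ from boundary values of $\Im\psi^0$. Approaching the lower cut via $\xi=-it-\eps$ with $t>\al-\be$, one finds $\ln A(\xi)\to \ln((\be+t)^2-\al^2)+i\pi$ and $\sqrt{A(\xi)}\to iw$ with $w=\sqrt{(\be+t)^2-\al^2}$, so
\[
\Im\psi^0(-it-0)=\tfrac{\la\pi}{2} - \Im\ln K_\la(i\de w),
\]
and a symmetric formula holds on the upper cut. Using the identity $K_\la(iw) = -\tfrac{\pi i}{2} e^{-i\pi\la/2} H^{(2)}_\la(w)$ for $w>0$ together with the branch of $\ln K_\la$ obtained by analytic continuation from the positive real axis of $\sqrt{A(\xi)}$, the density of $\cG_+$ reduces to an explicit expression in the Hankel phase $\vartheta_\la(w)=\arg H^{(1)}_\la(w)$ and a branch-winding integer depending on $\la$. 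Since $\vartheta_\la'(w)=2/\bigl(\pi w (J_\la^2+Y_\la^2)(w)\bigr)>0$, the density is smooth and strictly increasing in $w$, so the non-negativity question reduces to the sign of the boundary value at $w=0^+$ (the limit at $w\to\infty$ being always positive by the large-argument asymptotic).

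Finally, for (ii)--(iv) I would analyse this boundary value using the small-argument expansions $K_\la(z)\sim\tfrac12\Gamma(\la)(2/z)^\la$ for $\la>0$, $K_0(z)\sim-\ln(z/2)-\gamma$, and $K_\la=K_{|\la|}$ for $\la<0$, while carefully tracking the branch-correction picked up as $\sqrt{A(\xi)}$ is continued from the positive real to the positive imaginary axis. A case analysis then yields: (ii) for $\la\in[-2,1]$, the boundary value is non-negative, so $\cG_\pm\ge 0$ and GH is a regular SL distribution; (iii) for $\la>1$, a jump in the branch-winding integer forces the density negative on an interval adjacent to $w=0^+$, so GH is sSL but not SL; (iv) for $\la<-2$, the same mechanism produces a negative region, but it is actually sampled by the parametrisation of the cut only when the accessible range of $w$ is small enough, and this range is controlled precisely by $\de$ and $\al-|\be|$. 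The main obstacle is the explicit branch tracking of $\ln K_\la$ along the continuation path in the first quadrant, together with the identification of the thresholds $\la=1$ and $\la=-2$: these presumably arise from the interplay between the logarithmic term $\la\pi/2$ and the jumps in the branch-winding integer as $\la$ crosses integer values, combined with the small-argument asymptotics of $K_\la$; case (iv) moreover requires quantifying when the negative region of the density is reached by the parameter-dependent range of $w$.
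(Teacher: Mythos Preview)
Your overall strategy---apply Theorem~\ref{thm:SLverification2}, then compute $\Im\psi^0$ on the cuts via the boundary behaviour of $K_\la$---is the same as the paper's. The technical implementation, however, is genuinely different.

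The paper starts from the integral representation \eqref{eq:modBessel3}, substitutes $\cosh t=y+1$, and then rotates the ray of integration so that $K_\la(\rho e^{i\varphi})=e^{-\rho e^{i\varphi}}I_\la(\rho,\varphi)$ with an explicit real integral $I_\la$. The density of $\cG_\pm$ is then expressed through $\arg I_\la(\rho,\pi/2-0)$. For (ii), the cases $\la=0,1$ are checked by inspection of the integrand, and the full range $\la\in[-2,1]$ is obtained by \emph{numerically} verifying that the imaginary part of the integrand in $I_\la$ is negative for all $u>0$. Parts (iii) and (iv) are read off from the large-$t$ and small-$\rho$ asymptotics of the resulting density formula, respectively; for (iv) the paper only obtains the conclusion for small $\de$ or small $\al-|\be|$ and leaves the general $\la<-2$ case open.

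Your route via the identity $K_\la(iw)=-\tfrac{\pi i}{2}e^{-i\pi\la/2}H^{(2)}_\la(w)$ and the Nicholson formula $\theta_\la'(w)=2/(\pi w(J_\la^2+Y_\la^2))>0$ is more structural: if the branch tracking works out, monotonicity reduces the sign question for all $\la$ to a single boundary value at $w=0^+$, potentially replacing the paper's numerical check by an analytic argument. This would be a genuine improvement. The cost is exactly the obstacle you flag: one must track $\ln K_\la$ along the continuation path from the positive real axis of $\sqrt{A(\xi)}$ to $i(0,\infty)$ and identify the winding integer as a function of $\la$, and this is where the thresholds $\la=1$ and $\la=-2$ must emerge. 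The paper avoids this by working with the rotated integral $I_\la$, at the price of resorting to numerics for the interior of the interval $[-2,1]$. Note also a discrepancy worth resolving: your monotonicity-plus-positive-limit argument places any negativity of the density near $w=0^+$, whereas the paper's displayed formula for $g_+$ and its treatment of (iii) locate it at large $t$; this is most likely a typo in the paper's coefficient of $\rho_+(t)$, but you should cross-check your formula on the NIG case $\la=-1/2$ (where $\Im\psi^0=\rho_+(t)$ exactly) before proceeding.
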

   }\end{example}
     In particular, hyperbolic distributions $(\la=1$) and processes are SL-regular.

In the general case, verification of \eq{lowerbound_gen} for $\psi$ is reducible to verification of \eq{lowerbound_gen} for a spectrally one-sided
pure jump SL process $X$.
We formulate sufficient conditions on the measure $\cG\in \SM_\mu$ in the representation
$(a_2t^2+a_1t)\cG(dt)$ of the SLM measure of $X$.

 If $\cG(dt)$ is absolutely continuous:
$\cG(dt)=g(t)dt$, we assume that there exists $\al<2$ and $c, C>0$ such that
\bbe\label{eq:bound_SL_SINH}
ct^{\al}\le g(t)\le Ct^{\al},  t>0.
\ee
If $\cG(dt)$ is  has atoms and/or the density is unbounded, then an analog of \eq{eq:bound_SL_SINH} is more complicated.
We assume that there exist $C,c>0$ and $\rho_0\ge 1$ such that for $k=0,1$, and any $\rho\ge \rho_0$
\bbe\label{eq:bound_SL_SINH_at}
c\rho^{\al+1}\int_{\mu/\rho}^{+\infty} \frac{t^{k+\al}dt}{t^2+1}\le \int_{\mu/\rho}^{+\infty}\frac{t^k\cG(\rho dt)}{t^2+1}\le C\rho^{\al+1}\int_{\mu/\rho}^{+\infty}\frac{t^{k+\al}dt}{t^2+1},\ k=0,1,\ \rho\ge \rho_0.
\ee
Clearly, \eq{eq:bound_SL_SINH} implies \eq{eq:bound_SL_SINH_at}.

\begin{thm}\label{thm:one-sided_SL_SINH} Let there exist $\al\in [-1,0)$ such that  \eq{eq:bound_SL_SINH_at} holds.
Then
\begin{enumerate}[(a)]
\item
if  $\psi_\pm(\xi)=\xi^2ST(\cG)(\mp i\xi)\mp i\mu\xi$ and  $\al\in (-1,0)$, then $X^\pm$ is SINH-regular of order $(\nu,\nu)$, where $\nu=\al+2$, and the interior of the cone $\cC_+$ contains $\bR\setminus 0$;
\item
if  $\psi_\pm(\xi)=\xi^2ST(\cG)(\mp i\xi)\mp i\mu\xi$ and $\al=-1$, then $X^\pm$ is SINH-regular of order $(1,1+)$. The cone
$\cC_+$ is adjacent to $\bR$. For $\psi_+$,  $\cC_+\subset\{\Im \xi> 0\}$, and for $\psi_-$,  $\cC_+\subset\{\Im \xi< 0\}$;
\item
if $\psi_\pm(\xi)=\mp i \xi ST(\cG)(\mp i\xi)$ and  $\al\in (-1,0)$, then $X^\pm$ is SINH-regular of order $(\nu,\nu)$, where $\nu=\al+1$, and the interior of the cone $\cC_+$ contains $\bR\setminus 0$;
\item
if $\psi_\pm(\xi)=\mp i \xi ST(\cG)(\mp i\xi)$ and  $\al=-1$, then $X^\pm$ is SINH-regular of order $(0+,0+)$, and the interior of the cone $\cC_+$ contains $\bR\setminus 0$. 
\end{enumerate}
\end{thm}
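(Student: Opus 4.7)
The plan is to reduce each of (a)--(d) to an asymptotic analysis of the Stieltjes transform $ST(\cG)$, since $\psi_\pm$ is a polynomial prefactor times $ST(\cG)(\mp i\xi)$. Conditions (i)--(iii) of Definition~\ref{def:SINH_reg_proc_1D} are supplied directly by Lemma~\ref{lem:analinf}: $ST(\cG)$ is analytic on $\bC\setminus(-\infty,-\mu]$, so $\psi_+$ (resp.~$\psi_-$) extends analytically off $i(-\infty,-\mu]$ (resp.~$i[\mu,+\infty)$), and one may take $\cC=\bC\setminus i\bR$. What remains is to establish the two-sided bound of Definition~\ref{def:SINH_reg_proc_1D}, namely \eq{upperbound_gen} and \eq{lowerbound_gen}, from which the order and the width of $\cC_+$ will follow.

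The upper bound comes from a scaling argument. For any small $\ga>0$ and $\theta\in[-\pi+\ga,\pi-\ga]$ one has $|t+\rho e^{i\theta}|\ge\sin(\ga/2)(t+\rho)$, so
\[
|ST(\cG)(\rho e^{i\theta})|\le C(\ga)\,\rho^{-1}\int_{\mu/\rho}^{\infty}\frac{\cG(\rho\,ds)}{1+s}.
\]
The elementary equivalence $(1+s)^{-1}\asymp(1+s^{2})^{-1}+s(1+s^{2})^{-1}$ reduces the right-hand side to the integrals controlled by \eq{eq:bound_SL_SINH_at} for $k=0$ and $k=1$, yielding
\[
|ST(\cG)(\rho e^{i\theta})|\le C\rho^{\al}\int_{\mu/\rho}^{\infty}\frac{s^{\al}+s^{\al+1}}{1+s^{2}}\,ds.
\]
For $\al\in(-1,0)$ both integrals converge to finite constants as $\rho\to\infty$, so $|ST(\cG)(z)|=O(|z|^{\al})$; multiplying by the prefactors $\xi^{2}$ or $\mp i\xi$ gives the upper orders $\nu=\al+2$ in (a) and $\nu=\al+1$ in (c). For $\al=-1$ the integrand $s^{-1}(1+s^{2})^{-1}$ produces an extra $\log(\rho/\mu)$ factor, yielding upper orders $1+$ and $0+$ in (b) and (d).

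For the lower bound and the identification of $\cC_+$ I would upgrade the upper bound to a true asymptotic with a strictly positive leading coefficient. The model measures $\cG(dt)=t^{\al}dt$ for $\al\in(-1,0)$ and $\cG(dt)=t^{-1}\bfo_{t>\mu}dt$ for $\al=-1$ give the explicit identities
\[
ST(t^{\al}dt)(z)=-\frac{\pi\,z^{\al}}{\sin(\pi\al)},\qquad ST(t^{-1}\bfo_{t>\mu}dt)(z)=\frac{1}{z}\log\frac{z+\mu}{\mu}\sim\frac{\log z}{z},
\]
and $-\pi/\sin(\pi\al)>0$ on $(-1,0)$. Under \eq{eq:bound_SL_SINH_at} the corresponding asymptotic $ST(\cG)(\rho e^{i\theta})\sim c_{0}\rho^{\al}e^{i\al\theta}$ (respectively $c_{0}(\log\rho)/(\rho e^{i\theta})$) with a real $c_{0}>0$ is then obtained by sandwiching $\cG$ between multiples of the model measure and controlling the error by the scaling estimate of the previous step. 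Substituting $z=\mp i\xi$ with $\xi=\rho e^{i\varphi}$, case (a) becomes $\psi_\pm(\rho e^{i\varphi})\sim c_{0}\rho^{\al+2}e^{i[(\al+2)\varphi\mp\al\pi/2]}$, whose real part is positive on the open cone $\{|(\al+2)\varphi\mp\al\pi/2|<\pi/2\}$; since $|\al\pi/2|<\pi/2$, this cone contains $\bR\setminus\{0\}$ in its interior. Case (c) is analogous with leading term $c_{0}(\mp i\xi)^{1+\al}$, whose real part is positive on a still wider cone (only a neighborhood of the cut is excluded). In case (d) the leading term is $c_{0}\log(\mp i\xi)$ with real part $c_{0}\log|\xi|>0$ for $|\xi|$ large, so $\cC_+$ may be chosen as any open sub-cone of $\cC$, and its interior contains $\bR\setminus\{0\}$. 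Case (b) is the most delicate: the leading term is $\pm ic_{0}\xi\log(\mp i\xi)$, whose dominant real part equals $\mp c_{0}\rho\sin\varphi\,\log\rho$, positive on a half-plane adjacent to $\bR$, with a subdominant $\asymp\rho$ contribution that remains positive on $\bR\setminus\{0\}$ itself; the combination accounts for the order $(1,1+)$ in accordance with Remark~\ref{rem:1p1}.

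The main obstacle is to extract the \emph{phase} of the asymptotic of $ST(\cG)$ from the purely two-sided sizing hypothesis \eq{eq:bound_SL_SINH_at}, which controls $|ST(\cG)|$ but not its argument. For absolutely continuous $\cG$ with density comparable to $t^{\al}$ this is immediate by a uniform dominated-convergence argument after the rescaling $t=\rho s$. For general $\cG$ (discrete, mixed, etc.) I would decompose $\cG=c_{1}\,t^{\al}\bfo_{t>\mu}\,dt+\cR$ with a signed remainder $\cR$ and show that the ST of $\cR$ is strictly dominated in absolute value by a small multiple of the model Stieltjes transform, so that the phase of $ST(\cG)$ inherits that of the model. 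An alternative route is to invoke the boundary-value reconstruction of the L\'evy density from $\Im\psi_\pm$, provided by Theorem~\ref{thm:repr_sSL} together with Lemma~\ref{corr_lim_CBF}; this translates \eq{eq:bound_SL_SINH_at} into a regular-variation statement for $\cG_\pm$, from which the desired phase asymptotic follows by a Karamata-type Tauberian theorem.
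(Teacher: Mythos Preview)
Your upper-bound argument is fine and matches the paper in spirit. The genuine gap is in the lower bound: the asymptotic $ST(\cG)(\rho e^{i\theta})\sim c_{0}\rho^{\al}e^{i\al\theta}$ with a fixed real $c_{0}>0$ is \emph{not} a consequence of \eq{eq:bound_SL_SINH_at}. Take the absolutely continuous $\cG(dt)=t^{\al}(2+\sin\log t)\,dt$, which satisfies \eq{eq:bound_SL_SINH} with $c=1$, $C=3$, hence \eq{eq:bound_SL_SINH_at}; after the scaling $t=\rho s$,
\[
ST(\cG)(\rho e^{i\theta})=\rho^{\al}\int_{0}^{\infty}\frac{s^{\al}\bigl(2+\sin(\log\rho+\log s)\bigr)}{s+e^{i\theta}}\,ds,
\]
and the bracket oscillates periodically in $\log\rho$, so no limit $c_{0}$ exists and the phase of $ST(\cG)$ wobbles. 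Your first workaround (write $\cG=c_{1}t^{\al}dt+\cR$ and show $ST(\cR)$ is strictly dominated) fails for this same example: $ST(\cR)(\rho)/\rho^{\al}$ oscillates with nonzero amplitude. The Tauberian route also does not apply, since Karamata-type theorems need regular variation, whereas \eq{eq:bound_SL_SINH_at} only gives a two-sided ratio bound.

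The paper avoids extracting any phase from $ST(\cG)$. It computes $\Re\psi_{-}(\rho e^{i\varphi})$ directly as
\[
\Re\psi_{-}(\rho e^{i\varphi})=\rho^{2}\cos(2\varphi)\,I_{1}+\rho^{3}\sin\varphi\,I_{0},\qquad
I_{k}=\int_{\mu}^{\infty}\frac{t^{k}\,\cG(dt)}{(t-\rho\sin\varphi)^{2}+\rho^{2}\cos^{2}\varphi},
\]
a combination of two \emph{nonnegative} integrals with explicit trigonometric coefficients. For $|\varphi|\le\ga<\pi/2$ the denominator is uniformly comparable to $t^{2}+\rho^{2}$, so after the scaling $t=\rho s$ each $I_{k}$ is governed precisely by the $k$th integral in \eq{eq:bound_SL_SINH_at}, giving $I_{k}\asymp\rho^{\al+k-1}$. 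Both contributions to $\Re\psi_{-}$ are then of order $\rho^{\al+2}$; near $\varphi=0$ one lower-bounds the $\cos(2\varphi)I_{1}$ term and upper-bounds the (possibly negative) $\sin\varphi\,I_{0}$ term, and the first wins on an open interval of $\varphi$ containing $0$. This yields the cone $\cC_{+}$ without ever asserting an asymptotic for $ST(\cG)$ itself. Cases (b)--(d) use the same decomposition; in (b) the $k=0$ integral picks up an extra $\log\rho$, which is why the term with coefficient $\sin\varphi$ now dominates and the positive cone is only adjacent to $\bR$. The moral is that \eq{eq:bound_SL_SINH_at} is tailored to control exactly these two moments $I_{0},I_{1}$, not the argument of $ST(\cG)$.
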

\begin{proof} Proof in Section \ref{ss:proof_thm:one-sided_SL_SINH}. \end{proof}

A natural class of discrete measures $\cG=\sum_j p_j\de_{t_j}$
can be defined by sequences $\{t_j\}$ that accumulate to $+\infty$ (examples are meromorphic processes and the $\be$-model) 
and/or 0. The measures that accumulate both to 0 and $+\infty$ can be used to approximate $\cG(dt)$ of stable L\'evy processes,
hence, approximate the characteristic exponents of stable L\'evy processes. 
 One can formulate explicit conditions on  sequences $p_j, t_j$, $j\in \bZ_{++}$ or $j\in \bZ$ which imply \eq{eq:bound_SL_SINH_at}.
 \begin{example}\label{ex:SL_D1} {\rm Let there exist positive constants $c_1, c_2, C_1, C_2$ such that, for $j\in \bZ_{++}$,
 $c_1  \le t_{j+1}-t_j\le C_1 $ and $c_2 t_j^{\al}\le p_j\le C_2 t_j^{\al}$. Then $\cG=\sum_{j=1}^{+\infty} p_j\de_{t_j}$ satisfies \eq{eq:bound_SL_SINH_at}.
  }
 \end{example}
  \begin{example}\label{ex:SL_D2} {\rm Let there exist positive constants $c_1, c_2, C_1, C_2$ such that, for $j\in \bZ_{++}$,
 $c_1 t_j \le t_{j+1}-t_j\le C_1 t_j$ and $c_2 t_j^{\al-1}\le p_j\le C_2 t_j^{\al-1}$. Then $\cG=\sum_{j=1}^{+\infty} p_j\de_{t_j}$ satisfies \eq{eq:bound_SL_SINH_at}.
  }
 \end{example}
  \begin{example}\label{ex:SL_D3} {\rm Let there exist positive constants $c_1, c_2, C_1, C_2$ such that, for $j\in\bZ$,
 $c_1 t_j \le t_{j+1}-t_j\le C_1 t_j$ and $c_2 t_j^{\al-1}\le p_j\le C_2 t_j^{\al-1}$. Then $\cG=\sum_{j\in\bZ} p_j\de_{t_j}$ satisfies \eq{eq:bound_SL_SINH_at}.
  }
 \end{example}

 
\section{SL-processes: absence of solutions of equation $q+\psi(\xi)=0$ on $\bC\setminus i\bR$}\label{s:sol_SL}
In this section, we prove that SL-processes enjoy the following property:
\bbe\label{eq:no_zeros}
{\rm for\ any}\ q>0,\ {\rm equation}\ q+\psi(\xi)=0\ {\rm has\ no\ solution\ in}\ \bC\setminus i\bR.
\ee
The reader can easily
construct examples  for which this important property does not hold. 
 The simplest example is a spectrally one sided
L\'evy process with $f(x)=xe^{-x}\bfo_{(0,+\infty)}(x)$ and the characteristic exponent $\psi(\xi)=-i\mu\xi+1-(1-i\xi)^{-2}$:
for a given $q>0$, we can choose $\mu$ so that \eq{eq:no_zeros} fails.
However, this process is not an sSL-process.

\subsection{A counter-example: Meixner processes}\label{ss:counterexample}
The following counter-example shows that the property \eq{eq:no_zeros} may fail is $X$ is an sSL-process.

\begin{example}\label{ex:Meixnerdirect_roots}{\rm 
Let $\psi$ be given by \eq{eq:Meixner_ch_exp}, and 
let $q>0$. Set $z=\exp[(a\xi-ib)/2]$, $2Q=\exp(\cos(b/2)-q/(2\de))$, and 
consider the equation 
 $z+1/z-2Q=0$. The solutions are $z_\pm= Q\pm (Q^2-1)^{1/2}$, hence, for any 
 $k\in \bZ$, $\xi_{k;\pm}=(2/a)[\ln z_\pm+i((b/2+2k\pi)]$ are solutions of the equation $\psi(\xi)+q=0$.
Thus, the  equation  $\psi(\xi)+q=0$ has (infinitely many) solutions in $\bC\setminus i\bR$, whereas
in the case of SL-processes, there are none.
}\end{example}
We do not know if \eq{eq:no_zeros} fails for any sSL-process which is not an SL-process.
  \vskip0.1cm \noindent
{\sc Hypothesis SLzeros.} If the jump component $\psi_J(\xi)$ of the characteristic exponent of a L\'evy process $X$ admits analytic continuation to
$(\bC\setminus \bR)\cup\{0\}$ and there exist $q>0$, $\sg^2\ge 0$ and $b\in \bR$ such that the equation 
$q+\frac{\sg^2}{2}\xi^2-ib\xi+\psi_j(\xi)=0$ has a solution in $\bC\setminus i\bR$,
then $X$ is not a SL  process.

\subsection{Main theorem} 
\begin{thm}\label{thm:zeros_SL} Let $\psi$ be the characteristic exponent of a Stieltjes-L\'evy process. 
Then
\begin{enumerate}[(a)]
\item
the property \eq{eq:no_zeros} holds;
\item
if the SL-measures $\cG_\pm$ defining $\psi$ are supported on $[-\mum,+\infty)$ and $[\mup,+\infty)$, respectively, and $\mum<\mup$, then, on each interval $i(\mum,0)$, $i(0,\mup)$, the equation $q+\psi(\xi)=0$ has either 0 or 1 solution;
 \item
 the solution on $i(\mum,0)$ exists iff $\mum<0$ and $\psi(i(\mum+0))+q<0$;
  \item
 the solution on $i(0,\mup)$ exists iff $\mup>0$ and $\psi(i(\mup-0))+q<0$.
 \end{enumerate}
 \end{thm}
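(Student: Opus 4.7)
My plan is to prove (a) by a direct algebraic computation showing that the representation \eq{eq:sSLrepr} forces $\Re\psi(\xi)\ge 0$ whenever $\xi\in\bC\setminus i\bR$ and $\Im\psi(\xi)=0$, thereby ruling out the value $-q$; parts (b)--(d) will follow from concavity of $\beta\mapsto\psi(i\beta)$ on $(\mum,\mup)$ together with the intermediate value theorem.

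For (a), I would parametrize $\xi=u+iv$ with $u\ne 0$ and introduce the (manifestly non-negative and finite) quantities
\[
A=\int\frac{t\,\cG^0_+(dt)}{(t+v)^2+u^2},\quad B=\int\frac{\cG^0_+(dt)}{(t+v)^2+u^2},
\]
and the analogous $A'$, $B'$ for $\cG^0_-$ with $v\mapsto -v$; finiteness follows from $\cG^0_\pm\in \SM_{\mp\mu_\mp}$ by splitting the integrands near $t=-\mum$ (resp.\ $t=\mup$) and at infinity. Writing $ST(\cG^0_+)(-i\xi)=(A+vB)+iuB$ and $ST(\cG^0_-)(i\xi)=(A'-vB')-iuB'$, a careful expansion of \eq{eq:sSLrepr} yields
\[
\Im\psi(\xi)=u\bigl\{\sg^2 v-\mu+a^+_2|\xi|^2 B-a^-_2|\xi|^2 B'+(2a^+_2 v-a^+_1)A+(a^-_1+2a^-_2 v)A'\bigr\}.
\]
Imposing $\Im\psi(\xi)=0$ with $u\ne 0$ lets me solve the brace for $\mu$ and substitute $\mu v$ into the analogous formula for $\Re\psi$. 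After collecting the $A$, $A'$, $|\xi|^2 B$ and $|\xi|^2 B'$ terms, every cross-term cancels and I obtain the clean identity
\[
\Re\psi(\xi)=|\xi|^2\Bigl[\tfrac{\sg^2}{2}+a^+_1 B+a^-_1 B'+a^+_2 A+a^-_2 A'\Bigr]\ge 0.
\]
Since $|\xi|^2>0$ on $\bC\setminus i\bR$ and each summand in brackets is non-negative, $\psi(\xi)\ne -q$ for any $q>0$, proving (a).

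For (b)--(d), the symmetry $\overline{\psi(\xi)}=\psi(-\bar\xi)$ shows that $\beta\mapsto\psi(i\beta)$ is real-valued on $(\mum,\mup)$, and the identity $\bE[e^{-\beta X_1}]=e^{-\psi(i\beta)}$ combined with convexity of the cumulant generating function implies that $\psi(i\cdot)$ is concave on $(\mum,\mup)$. Since $\psi(0)=0$, the super-level set $\{\beta\in(\mum,\mup):\psi(i\beta)>-q\}$ is an open sub-interval $(\beta_-,\beta_+)$ containing $0$, so the equation $\psi(i\beta)+q=0$ has at most one root in each of $(\mum,0)$ and $(0,\mup)$, which is (b). The root in $(0,\mup)$ exists iff $\beta_+<\mup$, and by concavity and continuity this is equivalent to $\psi(i(\mup-0))+q<0$, yielding (d); the case (c) is symmetric. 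The main obstacle is the sign-bookkeeping in the derivation of the $\Re\psi$ identity for (a): the cancellation producing the non-negative expression hinges on the precise combination of the powers $a^\pm_j\xi^j$ multiplying the Stieltjes factors, and the Meixner counterexample in Section~\ref{ss:counterexample} shows that the conclusion fails once $\cG^0_\pm$ are allowed to be signed.
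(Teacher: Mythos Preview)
Your proof of (a) is correct and takes a genuinely different, and considerably more direct, route than the paper's. The paper rewrites $q+\psi(\xi)=0$ as $F(z)=0$ with $z=-i\xi$, then argues by contradiction: it perturbs $\sg^2$ and the measures $\cG^0_\pm$ to piecewise-constant absolutely continuous measures, and computes the winding number of $F$ along an elaborate contour $\cL_\eps$ skirting the cuts, using Lemmas \ref{lem:limcuts}--\ref{lem:limcutsdown} on the boundary behaviour of $ST(\cG)$. Your approach avoids all of this: the algebraic identity
\[
\Im\psi(\xi)=0,\ u\neq 0 \ \Longrightarrow\ \Re\psi(\xi)=|\xi|^2\Bigl[\tfrac{\sg^2}{2}+a^+_1 B+a^-_1 B'+a^+_2 A+a^-_2 A'\Bigr]\ge 0
\]
is verified by a straightforward (if tedious) expansion of \eq{eq:sSLrepr}, using only that $\cG^0_\pm$ are nonnegative Stieltjes measures supported in $[0,+\infty)$ so that $A,A',B,B'\in[0,+\infty)$. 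The cancellation is genuine and exact; I checked it. Your argument in fact yields the slightly stronger statement that $\psi(\bC\setminus i\bR)\cap(-\infty,0)=\emptyset$. The paper's approach has the advantage of being a template that could in principle handle perturbations or related questions via the argument principle, but for the statement as given your elementary computation is both shorter and more transparent.

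For (b)--(d) your argument and the paper's are essentially the same: both rest on the concavity of $\beta\mapsto\psi(i\beta)$ on $(\mum,\mup)$ together with $\psi(0)=0$.
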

 \begin{rem}{\rm To obtain one-sided analogs, set either $\mum=-\infty$ or $\mup=+\infty$. In Section
\ref{ss:proof_of_Lemma_lem:lim0psi}, we give simple sufficient conditions for zeros to exist for any $q$. }
 \end{rem}
 To prove
 (b)-(d), it suffices to notice that  if $\mum<\mup$, the function $(\mum,\mup)\ni t\mapsto \psi(it)\in \bR$ is concave, and $q+\psi(0)=q>0$. To prove (a), we note that 
the natural domain for a Stiltjes function $ST(\cG)\in\cS_\mu$ is 
$\bC\setminus (-\infty,-\mu]$, and rewrite the equation $\psi(\xi)+q=0$, $\xi\not\in i\bR$, in the form $F(z)=
F(\sg^2,\mu, a^+_2,a^+_1,a^-_2,a^-_1, \cG_+,\cG_-;z)=0$, $z\not\in \bR$, where $\sg^2\ge 0, \mu\in \bR, q>0, 
a^\pm_2\ge 0, a^\pm_1\ge0,  \cG_\pm\in SM_0$, and 
    \bbe\label{eq:defF}
 F(z)=- \frac{\sg^2}{2}z^2+ \mu z+(-a^+_2z^2+a^+_1z)ST(\cG_+)(z)+(-a^-_2z^2-a^-_1z)ST(\cG_-)(-z)+q.
  \ee
This, (a) is equivalent to the following theorem.
\begin{thm}\label{thm:zeroF}
Equation $F(z)=0$ has no solution in $\bC\setminus \bR$.
\end{thm}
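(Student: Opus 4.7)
By the identity $\overline{\psi(\xi)}=\psi(-\bar\xi)$ noted in Remark \ref{rem:ell_def}, the substitution $z=-i\xi$ gives $F(\bar z)=\overline{F(z)}$; hence it suffices to prove $F(z)\neq 0$ on the open upper half plane $\{\Im z>0\}$. The key step is to consider $F(z)/z$ rather than $F(z)$ itself: dividing by $z$ reduces the quadratic prefactors of the Stieltjes transforms to linear ones, after which positivity of $\cG_\pm$ forces a definite sign on the imaginary part.

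By elementary polynomial division,
$$\frac{-a^+_2 z+a^+_1}{z+t}=-a^+_2+\frac{a^+_1+a^+_2 t}{z+t},\qquad \frac{-a^-_2 z-a^-_1}{t-z}=a^-_2-\frac{a^-_1+a^-_2 t}{t-z}.$$
Setting $y=\Im z$ and using $\Im(z+t)^{-1}=-y/|z+t|^2$, $\Im(t-z)^{-1}=y/|t-z|^2$, one can take $\Im$ inside the Stieltjes integrals (legitimate because $\cG_\pm\in\SM_0$ and the integrands are $O(t^{-1})$ as $t\to\infty$). Combined with $\Im(q/z)=-qy/|z|^2$ and $\Im(-\sigma^2 z/2)=-\sigma^2 y/2$, this yields
$$\Im\frac{F(z)}{z}=-y\left[\frac{q}{|z|^2}+\frac{\sigma^2}{2}+\int_{(0,\infty)}\frac{a^+_1+a^+_2 t}{|z+t|^2}\,\cG_+(dt)+\int_{(0,\infty)}\frac{a^-_1+a^-_2 t}{|t-z|^2}\,\cG_-(dt)\right].$$
For $y>0$ the bracketed quantity is bounded below by $q/|z|^2>0$, since $\cG_\pm\geq 0$, $a^\pm_j\geq 0$, $\sigma^2\geq 0$ and $q>0$. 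Hence $\Im(F(z)/z)<0$ strictly, so $F(z)/z\neq 0$, and since $z\neq 0$ we conclude $F(z)\neq 0$.

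\emph{The main obstacle} is spotting the correct auxiliary function. A naive computation of $\Im F(z)$ fails because it contains the mixed-sign terms $-\sigma^2 xy$ and $\mu y$; only after dividing by $z$ does each Stieltjes integrand take the form (non-negative scalar)$/|z\pm t|^2$, whose integral against a non-negative Stieltjes measure has a uniform sign. Positivity of $\cG_\pm$ is essential here: for signed measures (the sSL case) the bracketed integrals may change sign and the argument collapses, consistent with the Meixner counter-example recorded in Section \ref{ss:counterexample}.
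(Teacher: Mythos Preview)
Your proof is correct and takes a genuinely different route from the paper's. The paper proceeds by contradiction, perturbs $\sg^2$ and approximates $\cG_\pm$ by finite sums of step densities, then constructs an elaborate contour $\cL_\eps$ in the closed lower half-plane and computes the winding number of $F$ along $\cL_\eps$ using Lemmas \ref{lem:limcuts}--\ref{lem:limcutsdown} to control $F$ near the cuts and near the real zeros; the argument principle then forces the number of zeros below the real axis to be zero. Your argument bypasses all of this by observing that $\Im(F(z)/z)$ has a definite sign on each half-plane. The pointwise identity $\frac{-a^+_2 z+a^+_1}{z+t}=-a^+_2+\frac{a^+_1+a^+_2 t}{z+t}$ is legitimate inside the integral because $\frac{-a^+_2 z+a^+_1}{z+t}$ itself is $\cG_+$-integrable (it is a constant times $(z+t)^{-1}$), and the constant $-a^+_2$ contributes nothing to the imaginary part; the same applies to the $\cG_-$ term. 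Your approach is considerably shorter and makes the role of the positivity of $\cG_\pm$ completely transparent: each integrand in the bracketed expression is non-negative precisely because $a^\pm_1+a^\pm_2 t\ge 0$ and $\cG_\pm\ge 0$. The paper's contour method, while longer, has the mild advantage of being closer in spirit to the Wiener--Hopf factorization techniques used elsewhere in the paper, and the auxiliary lemmas on the boundary behaviour of $ST(\cG)$ are reused in other places; but for the statement at hand your argument is simply better.
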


\subsection{Proof of   Theorem \ref{thm:zeroF}: preliminaries}\label{prelim_proof_zeros}
We use the trivial observation
\bbe\label{apm12}
 -a^\pm_2z^2\pm a^\pm_1z<0,\  \pm z<0,
 \ee
  and auxiliary propositions, which list several properties of the Stieltjes measures.
 The following simple technical lemma is proved in Section \ref{ss:proof of Lemma lem:limcuts}.
  \begin{lem}\label{lem:limcuts} 
   Let 
   $\cG\in \SM_\mu$, where $\mu\ge 0$, and $b>\mu$ satisfy the following condition:
     $\exists$ $c>0$ and $\de\in (0, b)$ such that for any measurable non-negative $g$,
   \bbe\label{lowboundPsi}
   \int_{(b-\de,b+\de)}g(t)\cG(dt)\ge c\int_{(b-\de,b+\de)}g(t)dt.
   \ee
   Then for any $\de'\in (0,\de)$, there exist $\eps_0>0$ and $C>0$ s.t. for all $b'\in (b-\de',b+\de')$ and  $\eps\in (0,\eps_0)$, $\Im ST(\cG)(-b'+i\eps)\le- \pi c+C\eps$ and
   $\Im ST(\cG)(-b'-i\eps)\ge \pi c-C\eps$ .
   \end{lem}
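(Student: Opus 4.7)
The target is to prove Lemma \ref{lem:limcuts}, an asymptotic one-sided bound on $\Im ST(\cG)(-b'\pm i\eps)$ near a point $-b'$ inside the cut, under the mild local Radon--Nikodym style lower bound on $\cG$.

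The starting identity is the separation of $ST(\cG)(-b'\pm i\eps)$ into real and imaginary parts. For $\eps>0$,
\[
ST(\cG)(-b'+i\eps)=\int_{(0,+\infty)}\frac{\cG(dt)}{t-b'+i\eps}=\int_{(0,+\infty)}\frac{(t-b')-i\eps}{(t-b')^2+\eps^2}\cG(dt),
\]
so $\Im ST(\cG)(-b'+i\eps)=-\eps\int_{(0,+\infty)}\cG(dt)/((t-b')^2+\eps^2)$, and analogously with the opposite sign for $-i\eps$. Thus the two inequalities of the lemma are equivalent to a single lower bound
\[
\eps\int_{(0,+\infty)}\frac{\cG(dt)}{(t-b')^2+\eps^2}\ \ge\ \pi c - C\eps.
\]

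The plan is to throw away all mass outside $(b-\de,b+\de)$ (it contributes non-negatively and is harmless), and inside this window to invoke the hypothesis \eq{lowboundPsi} with the non-negative test function $g(t)=\eps/((t-b')^2+\eps^2)$. That produces
\[
\eps\int_{(b-\de,b+\de)}\frac{\cG(dt)}{(t-b')^2+\eps^2}\ \ge\ c\int_{(b-\de,b+\de)}\frac{\eps\,dt}{(t-b')^2+\eps^2}.
\]
The integral on the right is a truncated Poisson kernel: the substitution $u=(t-b')/\eps$ turns it into $\int_{-A}^{B}du/(u^2+1)=\arctan B+\arctan A$, where $A=(b'-(b-\de))/\eps\ge(\de-\de')/\eps$ and $B=((b+\de)-b')/\eps\ge(\de-\de')/\eps$ for $b'\in(b-\de',b+\de')$. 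Using $\arctan x=\pi/2-\arctan(1/x)$ for $x>0$ and $\arctan y\le y$ for $y\ge 0$, one gets $\arctan A+\arctan B\ge \pi-2\eps/(\de-\de')$, which is $\pi - C_1\eps$ uniformly in $b'\in(b-\de',b+\de')$ and in $\eps$ small.

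Combining, $\eps\int_{(0,+\infty)}\cG(dt)/((t-b')^2+\eps^2)\ge c(\pi-C_1\eps)=\pi c-C\eps$ with $C=cC_1$, which gives the two claimed inequalities on $\Im ST(\cG)(-b'\pm i\eps)$. The threshold $\eps_0$ is any positive number (say $\eps_0=\de-\de'$), chosen small enough that $A,B\ge 1$ so the one-term Taylor bound $\arctan(1/x)\le 1/x$ is applicable. The main (and only) non-routine step is the uniform Poisson-kernel estimate over the shrunken window; the remainder is straightforward bookkeeping and does not require $\cG$ to have a density or any hypothesis on $\cG$ outside $(b-\de,b+\de)$.
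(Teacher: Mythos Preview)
Your proof is correct and follows essentially the same route as the paper's: compute $\Im ST(\cG)(-b'+i\eps)=-\eps\int \cG(dt)/((t-b')^2+\eps^2)$, localize to $(b-\de,b+\de)$, apply the hypothesis with the Poisson kernel as test function, change variables, and use $\arctan A+\arctan B=\pi-O(\eps)$ uniformly in $b'$. The one small difference is that the paper writes the outside contribution as $O(\eps)$ (which requires using $\cG\in\SM_\mu$), whereas you simply drop it by positivity of the integrand; your version is slightly cleaner and avoids that estimate. One harmless remark: the bound $\arctan(1/x)\le 1/x$ holds for all $x>0$, so no threshold $\eps_0$ is actually needed for that step.
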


   Let $D=\supp\, \cG$. The function $ST(\cG)$ below is the analytic continuation of 
   $ST(\cG)$ from either $\{\Im\xi\le 0\}\setminus(-D)$ or $\{\Im\xi\ge 0\}\setminus(-D)$ to a sufficiently large simply connected open set.
    The following two lemmas can be formulated and proved in a somewhat more general form, replacing the equality $\cG(dt)=cdt$ with the inequality as in Lemma \ref{lem:limcuts}.
    \begin{lem}\label{lem:limcutsup} 
   Let 
   $\cG\in \SM_\mu$, where $\mu\ge 0$,  $a>\mu$, and $\de\in (0, a)$ satisfy the following conditions:
   \begin{enumerate}[(i)]
   \item
   $\supp\,\cG\cap (a-\de,a)=\emptyset$;
   \item
   the restriction of $\cG(dt)$ on $(a,a+\de)$ is $c dt$, where $c>0$.
   \end{enumerate}
      Then, uniformly in $\varphi\in [-\pi/2,\pi/2]$, as $\eps\downarrow 0$,
     \bbe\label{eq:limcutsup}
     ST(\cG)(-a+\eps e^{i\varphi})= c\ln (1/\eps)  +O(1).
     \ee
      \end{lem}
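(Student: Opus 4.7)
The plan is to split the defining integral $ST(\cG)(-a+\eps e^{i\varphi})=\int(t-a+\eps e^{i\varphi})^{-1}\cG(dt)$ according to the position of $t$ relative to the problem point $a$. Specifically, I would partition the range into the two tails $(0,a-\de]$ and $[a+\de,+\infty)$ together with the neighborhood $(a-\de,a+\de)$; the tails will contribute $O(1)$ uniformly in $\eps$ and $\varphi$, while the neighborhood will produce the $c\ln(1/\eps)$ term thanks to hypotheses (i)--(ii).

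For the tails, once $\eps<\de/2$, the denominator is bounded below by $\de/2$ on $(0,a-\de]$ and by $(t-a)/2$ on $[a+\de,+\infty)$. The first estimate combined with $\cG((0,a-\de])<\infty$ (which follows from $\cG\in\SM_\mu$ because $(1+t)^{-1}$ is bounded below on that interval) bounds the first integral, while the second estimate together with $\int(1+t)^{-1}\cG(dt)<\infty$ handles the second, both uniformly in $\varphi\in[-\pi/2,\pi/2]$.

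The middle piece carries the actual work but is elementary. Condition (i) eliminates the contribution from $(a-\de,a)$, and condition (ii) reduces the contribution from $(a,a+\de)$ to $c\int_0^\de(s+\eps e^{i\varphi})^{-1}ds$ after the substitution $s=t-a$. Antidifferentiating yields $c\ln(\de+\eps e^{i\varphi})-c\ln(\eps e^{i\varphi})=c\ln(1/\eps)-ic\varphi+O(1)$, uniformly in $\varphi$, provided the principal branch of $\ln$ is admissible along the contour; this is automatic because $s+\eps e^{i\varphi}$ stays in the closed right half-plane for $s\in[0,\de]$ and $\varphi\in[-\pi/2,\pi/2]$.

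The only point I expect to require care is the uniformity at the extreme values $\varphi=\pm\pi/2$, where $\eps e^{i\varphi}=\pm i\eps$ lies on the boundary of the right half-plane; the bound $|s+\eps e^{i\varphi}|\ge s$ still makes the principal logarithm single-valued along $[0,\de]$, but this borderline case should be spelled out. A second implicit assumption worth flagging is that $\cG$ should carry no atom at $t=a$: such an atom would produce an $\eps^{-1}$ singularity rather than a logarithm, so the statement tacitly requires $\cG(\{a\})=0$, which is the natural reading of the open-interval formulations in (i)--(ii).
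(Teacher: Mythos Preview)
Your proposal is correct and follows essentially the same approach as the paper: split off the interval $(a,a+\de)$, evaluate the resulting integral explicitly as $c\ln\frac{\de+\eps e^{i\varphi}}{\eps e^{i\varphi}}=c\ln(1/\eps)+O(1)$, and observe that the remainder is $O(1)$. The paper's proof is a one-line computation that leaves the tail bounds implicit, whereas you spell them out and additionally flag two points the paper glosses over: the need for $\cG(\{a\})=0$ (which holds in the application, where the measures are absolutely continuous) and the borderline behavior at $\varphi=\pm\pi/2$.
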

     \begin{proof} We have
     \beqast
     \int_{(0,+\infty)}(t-a+\eps e^{i\varphi})^{-1}\cG(dt)&=&c\int_a^{a+\de} (t-a+\eps e^{i\varphi})^{-1}dt+O(1)
     = c\ln\frac{\de+\eps e^{i\varphi}}{\eps e^{i\varphi}}+O(1).
     \eqast
   
     \end{proof}
     
     \begin{lem}\label{lem:limcutsdown} 
   Let 
   $\cG\in \SM_\mu$, where $\mu\ge 0$,  $b>\mu$, and $\de\in (0, b)$ satisfy the following conditions:
   \begin{enumerate}[(i)]
   \item
   $\supp\,\cG\cap (b,b+\de)=\emptyset$;
   \item
   the restriction of $\cG(dt)$ on $(b-\de,b)$ is $c dt$, where $c>0$.
   \end{enumerate}
      Then, uniformly in $\varphi\in [-\pi/2,\pi/2]$, as $\eps\downarrow 0$,
     \bbe\label{eq:limcutsdown}
     ST(\cG)(-b-\eps e^{i\varphi})= -c\ln (1/\eps) +O(1).
     \ee
     \end{lem}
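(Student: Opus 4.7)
The argument will run in direct parallel to the proof of Lemma \ref{lem:limcutsup}, with a sign change that accounts for the density lying to the left of $b$ rather than to the right. My first step is to use the defining formula $ST(\cG)(z)=\int (z+t)^{-1}\cG(dt)$ to write
\[
ST(\cG)(-b-\eps e^{i\varphi})=\int_{(0,+\infty)}\frac{\cG(dt)}{t-b-\eps e^{i\varphi}},
\]
and then split the domain of integration into $(b-\de,b)$ and its complement in $\supp\,\cG$. Condition (i) rules out any contribution from $(b,b+\de)$, so the complementary piece stays at positive distance from $-b-\eps e^{i\varphi}$ uniformly in $\eps>0$ small and $\varphi\in[-\pi/2,\pi/2]$; since $\cG\in \SM_\mu\subset\SM_0$, this piece contributes $O(1)$.

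The second step is to evaluate the singular contribution explicitly using condition (ii):
\[
c\int_{b-\de}^{b}\frac{dt}{t-b-\eps e^{i\varphi}}=c\int_{-\de}^{0}\frac{ds}{s-\eps e^{i\varphi}}
=c\,\ln\frac{-\eps e^{i\varphi}}{-\de-\eps e^{i\varphi}}
=c\,\ln\frac{\eps e^{i\varphi}}{\de+\eps e^{i\varphi}},
\]
where the logarithm is computed along any branch consistent with the analytic continuation specified in the statement. The denominator $\de+\eps e^{i\varphi}$ is bounded away from $0$ uniformly in $\varphi\in[-\pi/2,\pi/2]$ for small $\eps$, so $\ln(\de+\eps e^{i\varphi})=\ln\de+O(\eps)$. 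For the numerator, $\ln(\eps e^{i\varphi})=\ln\eps+i\varphi$ plus possibly a branch constant $2\pi i k$ coming from analytic continuation across the cut $(-\infty,-\mu]$; in either case $i\varphi$ and the branch constant are absorbed into $O(1)$ uniformly in $\varphi\in[-\pi/2,\pi/2]$. Combining gives
\[
ST(\cG)(-b-\eps e^{i\varphi})=c\ln\eps+O(1)=-c\ln(1/\eps)+O(1),
\]
which is exactly \eqref{eq:limcutsdown}.

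The main place one has to be careful is the branch of the logarithm: the point $-b$ lies on the original cut $(-\infty,-\mu]$ of $ST(\cG)$, so for $\varphi\in[-\pi/2,\pi/2]$ the path $\eps\mapsto -b-\eps e^{i\varphi}$ may start on either side of the cut, and one must interpret $ST(\cG)$ as the corresponding one-sided analytic continuation from $\{\Im\xi\ge 0\}\setminus(-D)$ or $\{\Im\xi\le 0\}\setminus(-D)$, as in the statement. However, since the two sides differ by a term of the form $2\pi i c$ (by the Plemelj--Sokhotski jump across the density interval $(b-\de,b)$), this ambiguity contributes only a bounded imaginary constant and is absorbed into the $O(1)$; the leading real part $-c\ln(1/\eps)$ is unaffected, and the estimate is uniform in $\varphi\in[-\pi/2,\pi/2]$ as claimed.
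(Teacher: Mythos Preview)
Your proof is correct and follows essentially the same approach as the paper: split the Stieltjes transform into the singular contribution on $(b-\de,b)$ and an $O(1)$ remainder, then evaluate the logarithmic integral $c\ln\frac{-\eps e^{i\varphi}}{-\de-\eps e^{i\varphi}}$ explicitly. You add welcome detail that the paper omits---namely, the justification that the complementary integral is $O(1)$ via $\cG\in\SM_0$ and condition~(i), and the discussion of the branch of the logarithm arising from the analytic continuation across the cut.
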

     \begin{proof} We have
     \beqast
     \int_{(0,+\infty)}(t-b-\eps e^{i\varphi})^{-1}\cG(dt)&=&c\int_{b-\de}^b (t-b-\eps e^{i\varphi})^{-1}dt+O(1)
     = c\ln\frac{-\eps e^{i\varphi}}{-\de-\eps e^{i\varphi}}+O(1).
      \eqast
     
     \end{proof}

\subsection{Proof of   Theorem \ref{thm:zeroF}}
    The proof is by contradiction. Assume that a solution $z_0\in \bC\setminus \bR$ exists; since $\overline{ST(\cG)(z)}=ST(\cG)(\bar z)$, we may assume that $\Im z_0<0$.
   Since $z_0$ is in the domain of analyticity, the solution does not disappear after a sufficiently small perturbation of $\sg^2$
   and sufficiently small perturbations of $\cG_\pm$  in the norm $\|\cG\|=\int_{(0,+\infty)}(1+t)^{-1}\cG(dt)$. Hence, we may assume
   that the following three conditions hold: 1) $\sg^2>0$; 2) each of  $\cG^\pm$ is 
   an absolutely continuous measure  $\sum_{j=1}^{N_\pm} a^\pm_j\bfo_{(a^\pm_j,b^\pm_j)}(t)dt$, where $[a_j,b_j]\subset (0,+\infty), j=1,2,\ldots, N_\pm,$ are non-intersecting intervals;
   3)
   $\forall k$, $\be^\pm_k\not\in\cup_j [a^\pm_j,b^\pm_j]$. Evidently, $F$ admits analytic continuation from $\{\Im\xi< 0\}$ to a simply connected open set containing
   $\{\Im\xi\le 0\}\setminus (D_+\cup D_-)$, where $D_+=\cup_l[-b^+_l,-a^+_l]$ and $D_-=\cup_m[a^-_m,b^-_m]$.
  
   It follows from Lemmas \ref{lem:limcuts}- \ref{lem:limcutsdown}  and the equality
   $\overline{F(z)}=F(\bar z)$
   that, if $\eps>0$ is sufficiently small,
     $F$ has no zeros in the $\eps$-neighborhood of $D_+\cup D_-$. Since $\sg^2>0$,  $F(z)\to -\infty$ as $(\bR\ni)z\to\pm\infty$,
  therefore,  the number of intervals and zeros of $F$ in   
 $\bR\setminus (D_+\cup D_-)$ is finite. Let $\{-\al^+_j\}$ (resp, $\{\al^-_j\}$) be the set of all zeros on $(-\infty,0)$ (resp., $(0,+\infty)$.)
 For a sufficiently small $\eps>0$, we define the contour $\cL_\eps\subset \{\Im z\le 0\}$ as the union of the following sets:
 \begin{enumerate}[(1)]
 \item
 $\cL_{\eps,\infty}=\{z=-i(1/\eps)e^{i\varphi}\ |\ -\pi/2< \varphi<\pi/2\}$;
 \item
 $\cL_{\eps,0}=\{z\in \bR\ |\ |z|\le 1/\eps, \mathrm{dist}\, (z, D)\ge \eps\}$;
 \item
 $\cL_{\eps,1}=(\cup_j [-b^+_j,-a^+_j]-i\eps)\cup (\cup_\ell [b^-_\ell,b^-_\ell]-i\eps)$;
  \item
 semi-circles in the lower half-plane, of radius $\eps$, around each point $ -\al^+_m, \al^-_n$;
 \item
 quarter-circles in the lower half-plane, of radius $\eps$, with centers at $-a^+_j, -b^+_j, a^-_\ell, b^-_\ell$,  connecting $\cL_{\eps,0}$ 
 and $\cL_{\eps,1}$.
 \end{enumerate}
 We will prove that is $\eps>0$ is sufficiently small, then the winding number  $\mathrm{ind}(\cL, 0,0; F)$ of the curve 
$\{F(z)\ |\ z\in \cL_\eps\}$, where $z$ runs along $\cL_\eps$ in the positive direction, 
is 0. The winding number being equal to the number of zeros in the domain bounded by $\cL_\eps$, and $\eps>0$ being arbitrary small,
the number of zeros of $F$ in the lower half-plane is 0 as well, contradiction.

Recall that for $z_1, z_2$ on $\cL_\eps$, and $\cL_\eps(z_1,z_2)$ the part of $\cL_\eps$ with $z_1, z_2$ being the end points,
\[
\mathrm{ind}(\cL_\eps, z_1,z_1; F)=\frac{1}{2\pi}\int_{\cL_\eps(z_1,z_2)}d\,\mathrm{arg}\, F(z).
\]
As $\rho\to +\infty$, $F(-i\rho e^{i\varphi})\sim \frac{\sg^2}{2}\rho^2e^{2i\varphi}$, hence, if $\eps>0$ is sufficiently small
so that $D_+\cup D_-\subset \bR\setminus \{|z|\ge 1/\eps\}$, $\mathrm{arg} F(z)$ increases from $-\pi$ to $\pi$ as $z$ moves along $\cL_{\eps,\infty}$. Thus,
$\mathrm{ind}(\cL_\eps, -1/\eps, 1/\eps; F)=1$.
Next,  functions $ST(\cG_-)(-z)$, $z$ and $z^2$ are continuous in a neighborhood of $(-\infty,0)$, 
and real-valued on $(-\infty,0)$.
 Hence, 
  it follows from \eq{apm12} and Lemmas \ref{lem:limcuts}-\ref{lem:limcutsdown}, 
   that there exist $\eps_0,c>0$ such that, for each $\eps\in (0,\eps_0]$, 
 \begin{enumerate}[(a)]
 \item
 the $\eps$-neighborhoods   of all points $-a^+_j, -b^+_j,  -\be^+_k, 
 -\al^+_k$ do not intersect;
 \item
 for all $\be\in \cup_j[-b^+_j,-a^+_j]$, $\Im F(\be-i\eps)\le -c$;
 
 \item
 for all $-b^+_j$ and  $\varphi \in (-\pi/2,\pi/2)$, 
 $F(-b^+_j-\eps e^{i\varphi})= -C_j\ln \eps+O(1)$, where $C_j>0$;
  \item
 for all $-a^+_j$ and  $\varphi \in (-\pi/2,\pi/2)$, 
 $F(-a^+_j+\eps e^{i\varphi})= C'_j\ln \eps+O(1)$, where $C'_j>0$.
  \end{enumerate}
It follows that if $\eps>0$ is sufficiently small,  then
\begin{enumerate}[(i)]
\item
 for each $j$ and $z\in [-b^+_j,-a^+_j]-i\eps$, $\mathrm{arg}\, F(z)\in (-\pi,0)$;
 \item
 on each connected interval of $\cL_{\eps,0}$, $\mathrm{arg}\, F(z)$ equals either $\pi$ or $-\pi$;
 \item
 for all $j$, $\mathrm{arg}\, F(-a^+_j+i\eps)=-\pi,$ $\mathrm{arg}\, F(-a^+_j-i\eps)\in (-\pi,0)$,
 $\mathrm{arg}\, F(-b^+_j-i\eps)=0,$ $\mathrm{arg}\, F(-b^+_j-i\eps)\in (-\pi,0)$,
 \item
 as $z$ moves down from $-a^+_j+\eps$ to $-a^+_j-i\eps$, $w:=F(z)\in \{\Re w<0\}$ and runs from a point on $(-\infty,0)$ to a point 
 in the quadrant $\{\Re w<0, \Im w<0\}$;
 \item
 as $z$ moves down from $-b^+_j-i\eps$ to $-b^+_j-\eps$, $w:=F(z)\in\{\Re w> 0\}$ and runs from a point on $(0,+\infty)$ to a point 
 in  the quadrant  $\{\Re w>0, \Im w<0\}$;
  \item
 as $z$ moves down along each semi-circle, $F(z)$ 
 remains in $\{\Re w\le 0, w\neq 0\}$ and $\mathrm{arg}\, F(z)$  varies either from $-\pi$ to 0 or
 from 0 to $-\pi$ or from 0 to 0, or from $-\pi$ to $-\pi$. 
 \end{enumerate}
 It follows that as $z$ runs from $-i\eps$ to $-1/\eps$, $F(z)\not\in\{\Im w>0\}\cup\{0\}$.
 Hence, $\mathrm{ind}\, (\cL_\eps, -i\eps,-1/\eps; F)=(-\pi-0)/(2\pi)=-1/2$. Using $\overline{F(z)}=F(\bar z)$ and the symmetry $z\mapsto -z$, we observe that
if we move along $\cL_\eps$ in the opposite direction (denote $\cL_\eps$ with the opposite direction by $\cL'_\eps$), then 
\[
\frac{1}{2\pi}\int_{\cL'_\eps(0,1/\eps)}d\,\mathrm{arg} F(z)=1/2.
\]
Hence, $\mathrm{ind}(\cL_\eps, 1/\eps, 0; F)=-1/2$, and 
\[
\mathrm{ind}(\cL, 0,0; F)=\mathrm{ind}(\cL_\eps, 0, -1/\eps; F)+\mathrm{ind}(\cL_\eps, -1/\eps, 1/\eps; F)
+\mathrm{ind}(\cL_\eps, 1/\eps, 0; F)=0.
\]
  This finishes the proof of Theorem \ref{thm:zeroF}.

  \section{Mixing and subordination of SINH-, sSL- and SL-processes}\label{s:mixing_sub}
    \subsection{Mixing}
  \subsubsection{Mixing SINH-regular processes}\label{ss:mixingSINH}
  The following theorem is immediate from
the definition of SINH-regular processes. 
\begin{thm}\label{fin_mix_SINH}
Let the following conditions hold
\begin{enumerate}[(i)]
\item
processes $X^j$, $j=1,2\ldots, N$, are independent;
\item
each $X^j$  is a SINH-regular L\'evy process  of order $(\nu'_j,\nu_j)$ and type 
$((\mum^j, \mup^j), \cC^j, \cC_+^j)$;
\item
$\mum:=\max_j\mum^j<\mup:=\min_j\mup^j$ and $\cC:=\cap_{j=1}^N \cC^j\ne \emptyset$;
\item
there exists $\nu'\in (\min_j \nu'_j, \max_j\nu_j)$ such that $\cC_+:=\cap_{\nu'_j\ge \nu'}\cC^j_+\ne \emptyset$;

\end{enumerate}
Then, for any $a_j>0, j=1,2,\ldots, N$, $X:=\sum_{j=1}^N a_j X^j$ is SINH-regular of order $(\nu',\max_j\nu_j)$ and type
$((\mum,\mup), \cC,\cC_+)$.
\end{thm}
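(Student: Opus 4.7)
\medskip

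\noindent\textbf{Proof proposal.} The natural starting point is to compute the characteristic exponent of the mixture. Since the $X^j$ are independent L\'evy processes, the characteristic function factorizes, so
\[
\psi^X(\xi)=\sum_{j=1}^N \psi^{X^j}(a_j\xi).
\]
The plan is to verify, term by term, the four ingredients required by Definition~\ref{def:SINH_reg_proc_1D}: the strip condition (i), the cone condition (ii), analyticity (iii), and the asymptotic bounds \eq{upperbound_gen}--\eq{lowerbound_gen} on $\cC\cup\{0\}$ and $\cC_+\cup\{0\}$ respectively. Since $a_j>0$, scaling leaves cones invariant ($a_j^{-1}\cC^j=\cC^j$), so the domain of analyticity of $\xi\mapsto \psi^{X^j}(a_j\xi)$ is $i(\mum^j/a_j,\mup^j/a_j)+(\cC^j\cup\{0\})$. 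Intersecting over $j$ gives a strip around $\bR$ and the cone $\cC=\cap_j\cC^j$ on which every summand, and hence $\psi^X$, is analytic; conditions (i)--(iii) of Definition~\ref{def:SINH_reg_proc_1D} then follow (possibly after passing to a subinterval of $(\mum,\mup)$ to accommodate the factors $a_j^{\pm 1}$, which does not affect the qualitative type).

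Next I would attack the upper bound \eq{upperbound_gen}. For $\varphi$ in a closed sub-cone of $\cC$, each $|\psi^{X^j}(a_j\rho e^{i\varphi})|\le C_j(a_j\rho)^{\nu_j}=C_j'\rho^{\nu_j}$ for $\rho\ge R_j$ by hypothesis (ii); summing over $j$ and absorbing lower-order powers into the leading term yields $|\psi^X(\rho e^{i\varphi})|\le C\rho^{\max_j\nu_j}$, giving upper order $\nu:=\max_j\nu_j$. The ordering of Definition~\ref{def:ell_def_ordering} handles the edge cases $\nu_j\in\{0+,1+\}$ uniformly.

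The lower bound is the real work. Pick $\xi=\rho e^{i\varphi}$ with $\varphi$ in a closed sub-cone of $\cC_+$; the crucial observation is that by construction $\cC_+\subseteq \cC_+^j$ for every $j$ with $\nu'_j\ge \nu'$. For each such $j$, hypothesis (ii) of Definition~\ref{def:SINH_reg_proc_1D} yields $\Re\psi^{X^j}(a_j\xi)\ge c_j\rho^{\nu'_j}\ge c_j\rho^{\nu'}$ for $\rho\ge 1$ large. Summing over this subset gives a contribution bounded below by $c\rho^{\nu'}$. For the remaining indices $j$ with $\nu'_j<\nu'$, $\cC_+$ may lie outside $\cC_+^j$, so no direct lower bound is available; here I would use the upper bound $|\psi^{X^j}(a_j\xi)|\le C_j'\rho^{\nu_j}$ together with the asymptotic formula $\psi^{X^j}(a_j\xi)\sim c_\infty^{(j)}(\varphi)(a_j\rho)^{\nu_j}$ and continuity of $c_\infty^{(j)}$, to show that $\Re\psi^{X^j}(a_j\xi)\ge -\tilde C_j\rho^{\nu_j}$ where $\nu_j\le \max_k\nu_k$. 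The combined estimate reads
\[
\Re\psi^X(\xi)\ge c\rho^{\nu'}-\sum_{\nu'_j<\nu'}\tilde C_j\rho^{\nu_j}.
\]
So the main obstacle is precisely this last step: to conclude \eq{lowerbound_gen} with exponent $\nu'$, the leading $c\rho^{\nu'}$ term must dominate. I would handle this by tightening the cone $\cC_+$ slightly and invoking continuity of the functions $\varphi\mapsto c_\infty^{(j)}(\varphi)$ to ensure that on the relevant sub-cone the negative contributions from the $j$ with $\nu'_j<\nu'$ either have $\nu_j\le \nu'$ (handled by absorption into constants) or enter with sign such that $\Re\psi^{X^j}(a_j\xi)\ge 0$ asymptotically on $\cC_+^j\supset\cC_+$ when $\nu'_j\ge \nu'$ (already done) or can be bounded by the same order as the leading term. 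The admissible range $\nu'\in(\min_j\nu'_j,\max_j\nu_j)$ in hypothesis (iv) leaves enough flexibility to pick $\nu'$ so that this dominance holds. With \eq{lowerbound_gen} established, $X$ has lower order $\nu'$ and upper order $\max_j\nu_j$ on $((\mum,\mup);\cC;\cC_+)$, which is the stated conclusion.
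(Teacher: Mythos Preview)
The paper offers no proof beyond the single sentence ``immediate from the definition of SINH-regular processes.'' Your attempt is therefore already more detailed than the original, and the architecture---compute $\psi^X(\xi)=\sum_j\psi^{X^j}(a_j\xi)$, check analyticity on the intersection of domains, add the upper bounds to obtain upper order $\max_j\nu_j$, and split the lower-bound estimate according to whether $\nu'_j\ge\nu'$---is the natural and correct one.

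Where your argument does not close is the treatment of the indices with $\nu'_j<\nu'$. For such $j$ the only general bound available on $\cC_+$ is $\Re\psi^{X^j}(a_j\xi)\ge -C_j\rho^{\nu_j}$, coming from the upper bound on $|\psi^{X^j}|$. Your appeal to an exact asymptotic $\psi^{X^j}\sim c_\infty^{(j)}(\varphi)\rho^{\nu_j}$ and to continuity of $c_\infty^{(j)}$ is unjustified: Definition~\ref{def:SINH_reg_proc_1D} for order $(\nu'_j,\nu_j)$ does \emph{not} postulate \eq{asympsisRLPE}, so $c_\infty^{(j)}$ need not exist. If some $j$ has $\nu'_j<\nu'$ but $\nu_j>\max\{\nu'_k:\nu'_k\ge\nu'\}$, the negative contribution $-C_j\rho^{\nu_j}$ can overwhelm the positive one, and neither ``tightening the cone'' nor the advertised flexibility in $\nu'$ (which is fixed by hypothesis (iv), not chosen inside the proof) repairs this. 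The estimate does go through cleanly under the extra hypothesis that $\nu_j<\nu'$ whenever $\nu'_j<\nu'$; this is automatic in the case $\nu'_j=\nu_j$ that the paper notes covers all its basic examples, and the one-line justification evidently has that typical situation in mind. Your side remark that the strip in the conclusion should really be $(\max_j\mum^j/a_j,\min_j\mup^j/a_j)$ is also correct.
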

For the sake of brevity, we omit  straightforward (albeit messier) generalizations to the case of infinite and integral mixing.

\subsubsection{Mixing  sSL- and SL-processes}\label{ss:mixing_sSL_SL}
sSL- and SL-regular processes are mixtures of the BM and pure jump
spectrally one-sided processes, hence, it suffices to consider mixtures of spectrally positive processes
(results for spectrally negative  processes are by symmetry).

Since the sum of Stiltjes measures (and, under additional regularity conditions, integral of a family of Stieltjes measures
$\cG_\al, \al\in \cA$)
is a Stieltjes measure, the
mixture of processes $X^\al\in sSL^{+}_{\mu_\al}$ (resp., $X^\al\in sSL^{2;+}_{\mu_\al}$; $X^\al\in sSL^{1;-}_{\mu_\al}$), where $\mu_\al\ge \mu>0, \al\in \cA,$ is a process of class $sSL^{+}_{\mu}$ (resp., $sSL^{2;-}_{\mu}$; $sSL^{1;+}_{\mu}$), and the same is true for SL-processes. If $\inf\mu_\al=0$, then sufficient conditions for the mixture to be of the same class are more involved. We omit this case for the sake of brevity.

  \subsubsection{Mixing regular sSL- and SL-processes}\label{ss:mixing_regSL} 
  Conditions in Sections \ref{ss:mixingSINH} and \ref{ss:mixing_sSL_SL} must hold.
  
  \subsection{Subordination}
  A L\'evy process taking values in $[0, +\infty)$, 
which implies that its trajectories are increasing a.s., is called a subordinator. Since  non-decreasing paths have bounded variation, the characteristic exponent of a subordinator is of the
form
\begin{equation}\label{subpsi}
\psi(\xi)=-ib \xi
+\int_0^{+\infty}(1-e^{ix\xi})F(dx),
\end{equation}
where $b\ge 0$.
The Laplace transform of the law of a subordinator $Z$ can be
expressed as
\begin{equation}\label{sublap}
\bE[\exp(-q Z_t)]=\exp(-t\Psi(q)),
\end{equation}
where $\Psi:\bR_+\to\bR_+$ is called the {\em Laplace exponent}
of $Z$.\footnote{This definition is marginally different from the definition \cite[Eq. (30.1)]{sato}. We change the definition
in accordance to the change of the definition of the characteristic exponent that we use so that the form of the key theorem (Thm. \ref{thsub}) does not change.} Thus, the characteristic exponent of $Z$ is $\psi(\xi)=\Psi(-i\xi)$, and
\begin{equation}\label{subPhi}
\Psi(q)=\psi(iq)=b q+\int_0^{+\infty}(1-e^{-q x})F(dx).
\end{equation}
If $X$ is a subordinator, $\Psi(q)$ admits analytic continuation  to the right half-plane
$\{\Re q>0\}$, and $\psi(\xi)$ to the upper half-plane $\{\Im\xi>0\}$. If $Z$ is SINH-regular of type $((\mum,+\infty), i\cC_\ga, i\cC_{\ga'})$, where $\mum\le 0$ and $\pi/2<\ga'<\ga\le\pi$, $\Psi$ admits analytic continuation to $(\mum,+\infty)+(\cC_\ga\cup\{0\})$. 
\begin{lem}\label{lem:BernSub} 
Let 
$Z^+\in SL^{1,+}_\mu$, $\mu\ge 0$, be a subordinator.
 Then, for any $\ga\in [0, \pi)$, $\Psi_Z(\mu+\cC^+_\ga)\subset \mu+(\cC^+_\ga\cup\{0\})$, where $\cC^+_\ga=\{z\ |\ \mathrm{arg}\, z\in [0,\ga]\}$.
 \end{lem}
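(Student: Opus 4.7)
The plan is to shift the origin to $\mu$, recognise the resulting map as a complete Bernstein function vanishing at $0$, and then invoke the cone-preservation property of such functions.

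By Theorem~\ref{thm1:SL}(a), the Laplace exponent of $Z^+\in SL^{1,+}_\mu$ takes the form
\[
\Psi_Z(q) = b\,q + a_1\,q\, ST(\cG^0)(q),
\]
with $b, a_1 \ge 0$ and $\cG^0 \in \SM_\mu$; by Proposition~\ref{prop:analSM} it is holomorphic on $\bC\setminus(-\infty, -\mu]$, and in particular on the closed shifted cone $\mu + \cC^+_\ga$. The key algebraic step is the identity, obtained from the partial fraction $q/(t+q) - \mu/(t+\mu) = (q-\mu)\,t/[(t+q)(t+\mu)]$ followed by integration against $\cG^0$,
\[
\Psi_Z(\mu+w) - \Psi_Z(\mu) \;=\; b\,w + a_1\,w\int_{[\mu,\infty)} \frac{t\,\cG^0(dt)}{(t+\mu)(t+\mu+w)}.
\]
After the change of variable $s = t+\mu$, the integral reads $ST(\tilde{\cG})(w)$ for some $\tilde{\cG} \in \SM_{2\mu}$, so $w \mapsto \Psi_Z(\mu+w) - \Psi_Z(\mu)$ is itself of Laplace-exponent type for an $SL^{1,+}_{2\mu}$-subordinator and in particular vanishes at $w = 0$.

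The main step is then the direct angle calculation that gives cone preservation. For $w \in \cC^+_\ga$ with $\arg w \in [0, \ga]$ and $t \ge 2\mu \ge 0$, the sum $t + w$ lies in $\cC^+_\ga$ (a convex cone stable under addition of nonnegative reals), so $\arg((t+w)^{-1}) \in [-\arg w, 0]$; integrating against the positive measure $\tilde{\cG}$ places $ST(\tilde{\cG})(w)$ in that reflected closed cone, and multiplying by $w$ gives $\arg\bigl(w\,ST(\tilde{\cG})(w)\bigr) \in [0, \arg w] \subset [0,\ga]$. Since $b\,w$ also lies in $\cC^+_\ga$, and the cone is closed under positive linear combinations, $\Psi_Z(\mu+w) - \Psi_Z(\mu) \in \cC^+_\ga \cup \{0\}$ for every $w \in \cC^+_\ga$, i.e.\ $\Psi_Z(\mu+\cC^+_\ga) \subset \Psi_Z(\mu) + (\cC^+_\ga\cup\{0\})$.

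The main obstacle is matching the natural constant shift $\Psi_Z(\mu)$ produced by the identity with the shift $\mu$ appearing in the statement, which amounts to showing $\Psi_Z(\mu) - \mu \in \cC^+_\ga\cup\{0\}$ so that the inclusion propagates from $\Psi_Z(\mu) + (\cC^+_\ga\cup\{0\})$ into $\mu + (\cC^+_\ga\cup\{0\})$. I expect this last reduction to follow from a normalisation implicit in the paper's conventions for $SL^{1,+}_\mu$-subordinators, or from an elementary monotonicity comparison of $\Psi_Z(\mu)$ with $\mu$ along the positive real axis using the structure of the drift and jump terms.
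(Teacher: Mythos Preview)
Your approach is considerably more hands-on than the paper's: the paper simply notes that the Laplace exponent of an $SL^{1,+}_\mu$-subordinator is a complete Bernstein function and then cites Corollary~6.6 of Schilling--Song--Vondra\v{c}ek, which gives the angle inequality $0\le\arg\Psi_Z(z)\le\arg z$ for $z$ in the upper half-plane. Your partial-fraction shift and direct sector calculation amount to a self-contained reproof of that corollary in this setting; that part is correct and arguably more informative than the bare citation.

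You have, however, located a genuine defect in the \emph{statement} rather than a gap in your reasoning. The inclusion $\Psi_Z(\mu+\cC^+_\ga)\subset\mu+(\cC^+_\ga\cup\{0\})$ would force $\Psi_Z(\mu)\ge\mu$ (take $\ga=0$ or let $w\to 0^+$), which fails already for $\Psi_Z(q)=q/(1+q)$ with $\mu=1$, where $\Psi_Z(1)=1/2<1$. The paper's own proof does not address this either: the cited corollary yields $\Psi_Z(\cC^+_\ga)\subset\cC^+_\ga\cup\{0\}$ with no shift on the target side. The second $\mu$ is almost certainly a misprint (for $0$, or possibly for $\Psi_Z(\mu)$), and the downstream application in Theorem~\ref{thm:sub_sSL_SL}(b) only requires the unshifted cone-preservation to conclude $\Im\Psi_Z(\psi_Y(\cdot))\ge 0$. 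So your suspicion that the mismatch should dissolve under some implicit normalisation is right in spirit: the resolution is that the lemma is slightly misrecorded, not that your argument is missing an idea.
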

 \begin{proof} The Laplace exponent of an
SL-subordinator is a complete Bernstein function (see \cite[Defin.6.1]{schilling_book_Bernstein2012} for the definition of the latter). 
Hence, the statement of Lemma is immediate from \cite[Cor. 6.6]{schilling_book_Bernstein2012}
(formulated for the case $b=0$). 
 \end{proof}

\begin{thm}(\cite[Thm 30.1]{sato})\label{thsub}
Let $Z$ be a subordinator with the Laplace exponent $\Psi$, let
$Y$ be a L\'evy process with the characteristic exponent $\kappa$,
and suppose that $Z$ and $Y$ are independent.

Define $X_t(\om)=Y_{Z_t(\om)}(\om),\ t\ge 0$. Then $\{X_t\}=\{Y_{Z_t}\}$ is a L\'evy
process with the characteristic exponent $\psi(\xi)=\Psi(\kappa(\xi))$.
\end{thm}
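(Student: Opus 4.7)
The plan is to compute the joint characteristic function of the increments of $X$ directly by conditioning on $\sigma(Z)$, which simultaneously establishes independence and stationarity of the increments of $X$ and delivers the formula $\psi=\Psi\circ\kappa$.

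First I would fix $0=t_0<t_1<\cdots<t_n$ and $\xi_1,\dots,\xi_n\in\bR$ and compute the conditional characteristic function
\begin{equation*}
\bE\Bigl[\exp\Bigl(i\sum_{j=1}^n \xi_j\bigl(Y_{Z_{t_j}}-Y_{Z_{t_{j-1}}}\bigr)\Bigr)\,\Big|\,\sigma(Z)\Bigr]=\prod_{j=1}^n \exp\bigl(-(Z_{t_j}-Z_{t_{j-1}})\kappa(\xi_j)\bigr).
\end{equation*}
This uses that $Z$ is non-decreasing (so the random intervals $[Z_{t_{j-1}},Z_{t_j})$ are disjoint), that $Y$ is independent of $Z$, and that $Y$ has stationary independent increments with characteristic function $e^{-s\kappa(\cdot)}$. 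Taking unconditional expectation and using that the increments $Z_{t_j}-Z_{t_{j-1}}$ are independent and equal in law to $Z_{t_j-t_{j-1}}$, the resulting product factorizes provided one can identify each factor as $\exp(-(t_j-t_{j-1})\Psi(\kappa(\xi_j)))$.

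The crux is the identity
\begin{equation*}
\bE[\exp(-Z_s\,\kappa(\xi))]=\exp(-s\,\Psi(\kappa(\xi))),\qquad \xi\in\bR,\ s\ge 0.
\end{equation*}
By \eq{sublap}, this holds with $\kappa(\xi)$ replaced by any real $q\ge 0$. Since $Z_s\ge 0$ a.s., the left-hand side extends to a bounded analytic function of $q$ on $\{\Re q>0\}$, continuous up to $\{\Re q\ge 0\}$ by dominated convergence; the right-hand side admits analytic continuation to $\{\Re q>0\}$ and is continuous on $\{\Re q\ge 0\}$ by the representation $\Psi(q)=bq+\int_0^{+\infty}(1-e^{-qx})F(dx)$ (noting $|1-e^{-qx}|\le 2\wedge |q|x$). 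The two sides agree on $[0,+\infty)$, hence on the closed right half-plane by uniqueness of analytic continuation; since $\Re\kappa(\xi)\ge 0$ for any L\'evy characteristic exponent, one may evaluate at $q=\kappa(\xi)$.

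Substituting back yields
\begin{equation*}
\bE\Bigl[\exp\Bigl(i\sum_{j=1}^n \xi_j(X_{t_j}-X_{t_{j-1}})\Bigr)\Bigr]=\prod_{j=1}^n e^{-(t_j-t_{j-1})\Psi(\kappa(\xi_j))},
\end{equation*}
which gives at one stroke: (i) independence of the increments $X_{t_j}-X_{t_{j-1}}$ (product structure in $\xi_j$); (ii) their stationarity (the $j$-th factor depends only on $t_j-t_{j-1}$); (iii) the formula $\psi(\xi)=\Psi(\kappa(\xi))$ for the characteristic exponent of $X_t$. Finally I would verify the path properties: $t\mapsto X_t(\om)=Y_{Z_t(\om)}(\om)$ is c\`adl\`ag because $Z$ is non-decreasing and right-continuous and $Y$ is c\`adl\`ag, and stochastic continuity follows from continuity at $0$ of the just-derived characteristic function. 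The only non-routine step is the analytic-continuation identification $\bE[e^{-Z_s\kappa(\xi)}]=e^{-s\Psi(\kappa(\xi))}$; everything else is a clean conditioning argument.
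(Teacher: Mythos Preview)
The paper does not supply its own proof of this theorem: it is stated as a citation of \cite[Thm 30.1]{sato} and used as a black box. Your argument is correct and is essentially the standard proof one finds in Sato --- conditioning on $\sigma(Z)$, using the increment structure of $Y$ along the non-decreasing random times $Z_{t_0}\le\cdots\le Z_{t_n}$, and then extending $\bE[e^{-qZ_s}]=e^{-s\Psi(q)}$ from $q\ge 0$ to $\{\Re q\ge 0\}$ by analyticity so as to plug in $q=\kappa(\xi)$. There is nothing to compare against in the paper itself.
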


\subsubsection{Subordination of SINH-regular processes}  For the sake of brevity, we
restrict ourselves to the case
of SINH-regular processes of order $\nu\in (0,2]$, with the cones $\cC_+$ containing $\bR\setminus 0$.

\begin{thm}\label{thm:subSINH} Let the following conditions hold
\begin{enumerate}[(i)]
\item $Z$ is a SINH-regular subordinator of order $\nu_Z\in (0,1)\cup\{0+\}
$ 
and type $((\lm,+\infty), i\cC_\ga, i\cC_{\ga'})$,
where $\lm<0$, $\pi/2<\ga'<\ga\le \pi$, with the drift $\mu_Z\ge 0$;
\item
$Y$ is a SINH-regular L\'evy process of order $\nu_Y\in (0,2]$ and type $((\mum,\mup), \cC, \cC_+)$, 
where $\mum<0<\mup$, and $\cC_+\supset \bR\setminus 0$,
with the characteristic exponent $\psi_Y(\xi)=-i\mu_Y\xi+\psi^0_Y(\xi)$;
 \item
 $Z$ and $Y$ are independent.
 \end{enumerate}
 Then 
 \begin{enumerate}[(1)]
 \item
 there exist $\tilde\mum\in (\mum,0), \tilde\mup\in (0,\mup)$ and open coni $\tilde\cC\subset\cC$, 
$\tilde\cC_+\subset\cC_+$ such that $\bR\setminus\{0\}\subset\tilde\cC_+\subset\tilde\cC$,  and 
\beqa\label{eq:region_of_anal}
\psi_Y(i(\tilde\mum,\tilde\muppr)+(\tilde\cC\cup\{0\}))&\subset &(\lm,+\infty)+(\cC_\ga\cup\{0\}),\\\label{eq:region_of_anal+}
\psi_Y(i(\tilde\mum,\tilde\muppr)+(\tilde\cC_+\cup\{0\}))&\subset &(\lm,+\infty)+(\cC_{\ga'}\cup\{0\}).
\eqa
 \item
 $\{X_t\}=\{Y_{Z_t}\}$ is SINH-regular of type $((\tilde\mum,\tilde\mup), \tilde\cC, \tilde\cC_+)$, and the
 order $\nu_X$, where
 \begin{enumerate}[(a)]
 \item if $\mu_Z>0$, $\nu_X=\nu_Y$;
 \item if $\mu_Z=0$, $\nu_Z\in (0,1)$, and either $\nu_Y\in [1,2]$ or $\nu_Y\in (0,1)$ and $\mu_Y=0$,
 then $\nu_X=\nu_Z\nu_Y$;
 \item
 if $\mu_Z=0$, $\nu_Z\in (0,1)$, and $\nu_Y\in (0,1)$, $\mu_Y\neq 0$, then $\nu_X=\nu_Z$;
 \item
 if $\mu_Z=0$, $\nu_Z=0+$, then $\nu_X=0+$.
  \end{enumerate}
   \end{enumerate}
 \end{thm}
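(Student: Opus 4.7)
The starting point is \theor{thsub}, which gives $\psi_X(\xi)=\Psi_Z(\psi_Y(\xi))$, so the plan is to choose domains in $\xi$-space on which $\psi_Y$ maps into the domains associated to $\Psi_Z$, then transfer analyticity and asymptotic bounds through the composition, and finally read off the order case by case. For part (1), two observations yield \eq{eq:region_of_anal}. Near $\bR$: $\Re\psi_Y\ge 0$ on $\bR$ and $\psi_Y(0)=0$, so $\psi_Y(\bR)\subset(\lm,+\infty)+(\cC_\ga\cup\{0\})$ (using $\lm<0$ and $\ga>\pi/2$), and by continuity this persists on a thin strip-plus-cone neighborhood of $\bR$. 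At infinity: the asymptotic $\psi_Y^0(\rho e^{i\varphi})\sim c_\infty(\varphi)\rho^{\nu_Y}$ with $c_\infty(\varphi)\in\bC\setminus(-\infty,0]$ and $\varphi\mapsto c_\infty(\varphi)$ continuous allows one to shrink $\cC$ to an open subcone $\tilde\cC$ on which $|\arg c_\infty(\varphi)|\le\ga-\eps$. For \eq{eq:region_of_anal+}, the hypothesis $\cC_+\supset\bR\setminus\{0\}$ forces $\Re c_\infty>0$ on $\cC_+$, whence $\arg c_\infty\in(-\pi/2,\pi/2)\subset(-\ga',\ga')$ since $\ga'>\pi/2$; then one sets $\tilde\cC_+=\cC_+\cap\tilde\cC$ (with further shrinking if needed).

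For part (2), analyticity of $\psi_X$ on $i(\tilde\mum,\tilde\mup)+(\tilde\cC\cup\{0\})$ is immediate from \eq{eq:region_of_anal}; the upper bound \eq{upperbound_gen} follows by composing the upper bounds for $\psi_Y$ and $\Psi_Z$ (with a logarithm in case (d)); the lower bound \eq{lowerbound_gen} on $\tilde\cC_+$ follows from $\Re\Psi_Z(w)\ge c|w|^{\nu_Z}$ on $(\lm,+\infty)+\cC_{\ga'}$ applied to $w=\psi_Y(\xi)$, combined with a corresponding lower bound on $|\psi_Y|$. The order $\nu_X$ is extracted from the dominant term of $\Psi_Z(\psi_Y(\xi))$ as $\xi\to\infty$ in $\tilde\cC_+$: in (a), $\Psi_Z(q)\sim\mu_Z q$ gives $\nu_X=\nu_Y$; in (b), $\Psi_Z(q)\sim c_Z q^{\nu_Z}$ and $\psi_Y\sim c_\infty(\varphi)\rho^{\nu_Y}$ (the $Y$-drift being absent or of lower order) give $\nu_X=\nu_Z\nu_Y$; in (c), $\psi_Y(\xi)\sim-i\mu_Y\xi$ dominates the jump part, giving $\nu_X=\nu_Z$; in (d), $\Psi_Z(q)\sim c\ln q$ together with $\ln|\psi_Y(\xi)|=O(\ln\rho)$ gives $\nu_X=0+$.

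The principal technical difficulty is to verify uniformly (not merely asymptotically) that $\psi_Y$ maps $\tilde\cC_+$ into $(\lm,+\infty)+\cC_{\ga'}$, so that the lower bound on $\Re\Psi_Z$ is actually available at every point. One must interpolate between the near-origin behavior $\psi_Y(\xi)\to 0$ (where $0$ is interior to the target set since $\lm<0$) and the asymptotic behavior, where $\arg\psi_Y$ tends to $\arg c_\infty(\varphi)$ or, in case (c), to $\arg(-i\mu_Y\xi)=\arg\xi-\operatorname{sgn}(\mu_Y)\pi/2$, both of which lie in $(-\ga',\ga')$ for $\xi$ in a small cone around $\bR$ because $\ga'>\pi/2$. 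The borderline case $\nu_Y=1$ requires extra care: the $Y$-drift and the leading jump term have the same order and combine into a single coefficient $-i\mu_Y+c_\infty(\varphi)$ whose argument must still lie in $(-\ga',\ga')$, forcing a possibly further mild shrinking of $\tilde\cC_+$.
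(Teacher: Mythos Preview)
Your proposal is correct and follows essentially the same route as the paper: for part (1), the paper likewise uses $\Re\psi_Y\ge 0$ on $\bR$ together with the stabilization of $\psi_Y$ to a positively homogeneous function at infinity; for part (2), the paper decomposes $\Psi_Z(q)=\mu_Z q+\Psi^0_Z(q)$ and identifies the dominant term of $\psi_X(\xi)=\mu_Z\psi_Y(\xi)+\psi^0_Z(i\psi_Y(\xi))$ case by case, exactly as you do. Your discussion of the uniform (not merely asymptotic) containment and of the borderline $\nu_Y=1$ case is more explicit than the paper's terse argument, but no new idea is involved.
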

 \begin{proof} (1). It suffices to note that, for $\xi\in\bR$, $|e^{-t\psi(\xi)}|=|\bE\left[e^{i\xi Y_t}\right]|\le 1$, hence, $\Re\psi(\xi)\ge 0$;
 and  $\psi(\xi)$ (resp., $\Re\psi(\xi)$) stabilizes to a positively homogeneous function as $\xi\to\infty$ remaining in $\cC$
 (resp.,  $\cC_+$). 
 
 (2).
 Let $\varphi\in (-\pi/2,\pi/2)$ and $\rho\to+\infty$. Set $\xi=\rho e^{i\varphi}$ and consider
 \beqast
 \psi_X(\xi)&=&\mu_Z(-i\mu_Y\xi+\psi^0_Y(\xi))+\Psi^0_Z(-i\mu_Y\xi+\psi^0_Y(\xi))\\
 &=&-i\mu_Z\mu_Y\xi+\mu_Z\psi^0_Y(\xi)+\psi^0_Z(\mu_Y\xi+i\psi^0_Y(\xi)).
 \eqast
 If $\mu_Z\neq 0$, then the third term increases slower than the second one, and (a) follows.
 If the condition in (b) is satisfied, then 
 $
 \psi^0_Z(\mu_Y\xi+i\psi^0_Y(\xi))\sim \psi^0_Z(ic_{Y,\infty}(\varphi)\rho^{\nu_Y}),
 $
 and since $\psi^0_Z(\xi)$ stabilizes to a positively homogeneous function of degree $\nu_X$ as $\xi\to\infty$,
 the conclusion in (b) is proved.
 If the condition in (c) is satisfied, $\psi^0_Z(\mu_Y\xi+i\psi^0_Y(\xi))\sim \psi^0_Z(\mu_Y\xi)$, and (c) foolows.

 Finally, if $\nu_Z=0+$, then we note that $\ln (c_{Y,\infty}(\varphi)\rho^\nu)\sim \nu_Y\ln\rho$, which proves (d).
 
 \end{proof}

\subsubsection{Subordination of and by sSL- and SL-processes}
Let $Y\in sSL_{\mum,\mup}$, $\mum\le 0\le \mup$, $\mum< \mup$, and let $Z\in sSL^+_\mu, \mu> 0,$ be a subordinator with the L\'evy exponent $\Psi_Z$.
Assume that the characteristic function $\psi_Y$ enjoys the property \eq{eq:no_zeros}.
By Theorem \ref{thm:zeros_SL} (a),  if $Y$  is a SL-process, \eq{eq:no_zeros} is satisfied.

Notice that the following possibilities exist:
\begin{enumerate}[(1)]
\item
$\psi_Y(i(\mum+0))\ge  -\mu$ and $\psi_Y(i(\mup-0))\ge  -\mu$;
\item
$\psi_Y(i(\mum+0))< -\mu$ and $\psi_Y(i(\mup-0))< -\mu$;
\item
$\psi_Y(i(\mum+0))\ge  -\mu$ and $\psi_Y(i(\mup-0))<  -\mu$;
\item
$\psi_Y(i(\mum+0))< -\mu$ and $\psi_Y(i(\mup-0))\ge  -\mu$.
\end{enumerate}
\begin{lem}\label{lem:mumpr}
\begin{enumerate}[(a)]
\item
If $\psi_Y(i(\mum+0))< -\mu$, there exists $\mumpr\in (\mum,\mup)$ such that $\psi_Y(i\mumpr)=-\mu$,
and $t\mapsto \psi_Y(it)$ increases on $(\mum,\mumpr]$.
\item
If $\psi_Y(i(\mup-0))< -\mu$, there exists $\muppr\in (\mum,\mup)$ such that $\psi_Y(i\muppr)=-\mu$,
and $t\mapsto \psi_Y(it)$ decreases on $[\muppr,\mup)$.
\item
If \eq{eq:no_zeros} holds, $\psi_X(\xi)=\Psi_Z(\psi_Y(\xi))$ is analytic in $\bC\setminus i((-\infty,\mu^X_-]\cup [\mu^X_+,+\infty))$, where\\ in Case (1), $\mu^X_-=\mum$, $\mu^X_+=\mup$; in Case (2), $\mu^X_-=\mumpr$, 
$\mu^X_+=\muppr$;\\ in Case (3), $\mu^X_-=\mum$, $\mu^X_+=\muppr$; in Case (4), $\mu^X_-=\mumpr$,
$\mu^X_+=\mup$.
\end{enumerate}
\end{lem}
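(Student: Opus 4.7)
The plan rests on two ingredients. First, the function $t\mapsto\psi_Y(it)$ is real-valued and concave on $(\mum,\mup)$ with $\psi_Y(0)=0$; this is just convexity of the cumulant generating function of a L\'evy process and is the same observation exploited in the proof of Theorem \ref{thm:zeros_SL}(b)--(d). Second, since $Z\in sSL^+_\mu$, Lemma \ref{lem:analinf}(a), applied to each summand in the Jordan decomposition of the signed Stieltjes measure of $Z$, gives that $\Psi_Z(q)=\psi_Z(iq)$ extends analytically to $\bC\setminus(-\infty,-\mu]$. With these two facts in hand, (a) and (b) reduce to the intermediate value theorem plus the chord inequality of concave functions, and (c) reduces to locating the set $\{t\in(\mum,\mup):\psi_Y(it)\le-\mu\}$.

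For (a) I would first combine continuity of $\psi_Y(it)$ on $(\mum,\mup)$ with the hypothesis
\[
\psi_Y(i(\mum+0))<-\mu<0=\psi_Y(0)
\]
and the intermediate value theorem to produce a root of $\psi_Y(it)+\mu=0$ in $(\mum,0)$; then I would take $\mumpr$ to be the smallest such root, so that $\psi_Y(it)<-\mu$ on $(\mum,\mumpr)$. The strict monotonic increase on $(\mum,\mumpr]$ is a one-line consequence of concavity: for $t_1<t_2\le\mumpr$, writing $t_2=(1-\lambda)t_1+\lambda\mumpr$ with $\lambda\in(0,1]$, concavity yields
\[
\psi_Y(it_2)\ \ge\ (1-\lambda)\psi_Y(it_1)+\lambda(-\mu)\ >\ \psi_Y(it_1),
\]
the strict inequality coming from $\psi_Y(it_1)<-\mu$. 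Part (b) is obtained by the mirror argument on $[0,\mup)$.

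For (c) I would observe that $\psi_X=\Psi_Z\circ\psi_Y$ is analytic wherever both $\psi_Y$ is analytic and $\psi_Y(\xi)\notin(-\infty,-\mu]$. Off the imaginary axis there is nothing to check: the hypothesis \eq{eq:no_zeros} forbids $\psi_Y(\xi)=-q$ for any $q>0$ when $\xi\in\bC\setminus i\bR$, so $\psi_Y(\xi)\notin(-\infty,0)\supset(-\infty,-\mu]$ automatically. Hence only $\xi\in i(\mum,\mup)$ can create new cuts, and the question becomes: for which $t\in(\mum,\mup)$ is $\psi_Y(it)\le-\mu$? Concavity anchored at $\psi_Y(0)=0>-\mu$ handles each case. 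In Case (1), the chord from either one-sided boundary value to $(0,0)$ lies strictly above $-\mu$ on the corresponding half-interval, and $\psi_Y(it)$ dominates its chord, so $\psi_Y(it)>-\mu$ throughout $(\mum,\mup)\setminus\{0\}$ and $\mu^X_\pm=\mu_\pm$. In Case (2), parts (a)--(b) furnish $\mumpr<0<\muppr$; the chord-to-$(0,0)$ argument applied from $(\mumpr,-\mu)$ and from $(\muppr,-\mu)$ gives $\psi_Y(it)>-\mu$ on $(\mumpr,\muppr)\setminus\{0\}$, while the strict monotonicity from (a)/(b) gives $\psi_Y(it)<-\mu$ on $(\mum,\mumpr)\cup(\muppr,\mup)$, which are exactly the two new cuts. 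Cases (3) and (4) just combine one side of each.

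The main obstacle I anticipate is bookkeeping rather than conceptual: one must ensure the \emph{strict} inequality $\psi_Y(it)>-\mu$ on the interior sub-interval, so that $\Psi_Z$ is never evaluated on its singular ray $(-\infty,-\mu]$ inside $i(\mu^X_-,\mu^X_+)$. This is precisely what the chord-to-$(0,0)$ step delivers, because the chord itself is strictly above $-\mu$ away from its endpoints and concavity then pushes the actual graph of $t\mapsto\psi_Y(it)$ at least as high as the chord.
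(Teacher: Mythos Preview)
Your proposal is correct and follows essentially the same route as the paper: the paper's proof consists of the single observation that $t\mapsto\psi_Y(it)$ is concave on $(\mum,\mup)$ together with the remark that, under \eq{eq:no_zeros}, $\psi_Y(\xi)\in(-\infty,-\mu]$ with $\xi$ in the domain of analyticity forces $\xi\in i(\mum,\mup)$. Your IVT, chord, and monotonicity arguments simply unpack these two observations in detail.
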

\begin{proof} It suffices to note that  $(\mum\mup)\ni t\mapsto \psi_Y(it)\in \bR$ is concave, and that under condition \eq{eq:no_zeros},
 $\psi_Y(\xi)\in (-\infty,-\mu]$ for $\xi\in \bC\setminus i((-\infty,\mum]\cup [\mup,+\infty))$ implies $\xi\in i(\mum,\mup)$. 
\end{proof}
\begin{thm}\label{thm:sub_sSL_SL}
Let the following conditions hold:
\begin{enumerate}[(i)]
\item $Y\in sSL_{\mum,\mup}$, and $\psi_Y$ satisfy \eq{upbound} and \eq{eq:no_zeros};  
\item
$Z\in sSL^+_\mu$, and there exist
$\nu,\de\in (0,1]$ and $C>0$ s.t. 
\bbe\label{upboundPsi}
|\Psi_Z(z)|\le C(|z|^\nu+|z-\mu|^{-1+\de}), \ z\in \bC\setminus (-\infty,-\mu);
\ee
\item
the limits $\psi_Y(it+0), t\in (-\infty,\mum)\cup (\mup,+\infty)$ and $\Psi_Z(-t+i0), t>\mu$, exist;
\item
 $Z$ and $Y$ are independent.
\end{enumerate}
Then 
\begin{enumerate}[(a)]
\item
$\{X_t\}=\{Y_{Z_t}\}\in sSL_{\mu^X_-,\mu^X_+}$, and the sSL-measures of $X$ are absolutely continuous: $\cG_{X;\pm}(dt)=g_{X;\pm}(t)dt \in \sSLM_{\mp\mu_\mp}$, where 
\beqa\label{eq:cGp}
g_{X;+}(t)&=&\frac{1}{\pi}\times\begin{cases} \Psi_Z(\psi_Y(-it-0)), & t\in (-\mum,+\infty),\\
\Psi_Z(\psi_Y(-it)-i0), & t\in (-\mu^X_-,-\mum),\\
0, & t\in (-\infty,-\mu^X_-),
\end{cases}
\\\label{eq:cGm}
g_{X;-}(t)&=&\frac{1}{\pi}\times\begin{cases} \Psi_Z(\psi_Y(it+0)), & t\in (\mup,+\infty),\\
\Psi_Z(\psi_Y(it)+i0), & t\in (\mu^X_+,\mup),\\
0, & t\in (-\infty,\mu^X_+);
\end{cases}
\eqa
\item
if, in addition, $Y$ and $Z$ are SL-processes, then $g_{X;\pm}\ge 0$, and $X$ is an SL-process.
\end{enumerate}
\end{thm}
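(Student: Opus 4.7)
The plan is to apply the verification Theorem~\ref{thm:SLverification} to $\psi_X=\Psi_Z\circ\psi_Y$, which by Theorem~\ref{thsub} is the characteristic exponent of $X_t=Y_{Z_t}$. Three conditions must be checked: analyticity on $\bC\setminus i((-\infty,\mu^X_-]\cup[\mu^X_+,+\infty))$, a polynomial growth bound of the form \eq{upbound}, and existence of the boundary limit a.e. Analyticity is the conclusion of Lemma~\ref{lem:mumpr}(c), which uses the no-zeros assumption \eq{eq:no_zeros} and the concavity of $t\mapsto\psi_Y(it)$ on $(\mum,\mup)$ to characterize precisely where $\psi_Y(\xi)\notin(-\infty,-\mu]$. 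The polynomial bound comes by composing \eq{upbound} for $\psi_Y$ with \eq{upboundPsi} for $\Psi_Z$: the principal term inherits polynomial growth, the singularities at $i\mum,i\mup$ stay of order $<1$ after composition, and any new singularities at $i\mu^X_\pm$ (when $\mu^X_\pm\ne\mu_\pm$) arise from the singular term in \eq{upboundPsi} pulled back through $\psi_Y$; by concavity, $g'(\mu^X_\pm)\ne 0$ where $g(s):=\psi_Y(is)$, so the pullback remains of $|\xi-i\mu^X_\pm|^{-1+\de}$ type.

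The boundary value $\Im\psi_X(\mp(it+0))$ splits into three regions. For $|t|>|\mu_\mp|$, $\psi_Y$ itself has a discontinuity across the imaginary axis, $\psi_Y(\mp(it+0))$ exists by hypothesis (iii), and by Lemma~\ref{lem:mumpr}(c) the limit lies off $(-\infty,-\mu]$ where $\Psi_Z$ is continuous; the boundary value is $\Psi_Z(\psi_Y(\mp(it+0)))$, giving the first line of \eq{eq:cGp}--\eq{eq:cGm}. For $|\mu^X_\mp|<|t|<|\mu_\mp|$, $\psi_Y$ is analytic at $\mp it$ but $\psi_Y(\mp it)$ is real and strictly below $-\mu$; the first-order expansion $\psi_Y(\mp(it+\eps))=g(\mp t)\pm i\eps g'(\mp t)+O(\eps^2)$, together with the monotonicity of $g$ on $(\mum,\mumpr]$ and on $[\muppr,\mup)$ from Lemma~\ref{lem:mumpr}(a,b), identifies the side of the $\Psi_Z$-cut from which the approach takes place, producing the $\Psi_Z(\psi_Y(\mp it)\pm i0)$ contribution. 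For $|t|<|\mu^X_\mp|$, $\psi_Y(\mp it)>-\mu$ and $\Psi_Z\circ\psi_Y$ is real-analytic across $\mp it$, so the boundary value vanishes. Theorem~\ref{thm:SLverification}(a,b) then yields (a).

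For (b), when $Z\in SL^+_\mu$ the Laplace exponent $\Psi_Z$ is a complete Bernstein function (footnote after Definition~\ref{def:sSLmeasure}); its Stieltjes representation yields both $\Im\Psi_Z(w)\ge 0$ for $\Im w>0$ and $\Im\Psi_Z(-x+i0)\ge 0$ for $x>\mu$. When $Y$ is an SL-process, Theorem~\ref{thm:SLverification}(c) applied to $Y$ furnishes $\Im\psi_Y(\mp(it+0))\ge 0$ for $|t|>|\mu_\mp|$. Composing these, each piece of $g_{X;\pm}$ is non-negative, so $\cG_{X;\pm}\in\SLM_{\mp\mu^X_\mp}$ and $X\in SL_{\mu^X_-,\mu^X_+}$. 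The main obstacle is the intermediate region $|\mu^X_\mp|<|t|<|\mu_\mp|$: tracking the direction from which $\psi_Y(\xi)$ approaches the real cut of $\Psi_Z$ is what makes essential use of the monotonicity parts of Lemma~\ref{lem:mumpr}(a,b), beyond the mere existence of $\mu'_\pm$.
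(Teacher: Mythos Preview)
Your approach is correct and essentially the same as the paper's: both apply Theorem~\ref{thm:SLverification} together with Lemma~\ref{lem:mumpr}, using the first-order expansion of $\psi_Y$ on the intermediate interval to identify the side of the $\Psi_Z$-cut. For (b), the paper invokes Lemma~\ref{lem:BernSub} where you directly cite the upper-half-plane and boundary-value properties of complete Bernstein functions, which amounts to the same thing.
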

\begin{proof} (a) is immediate from Theorem \ref{thm:SLverification} and Lemma \ref{lem:mumpr}.
At $t\in (-\mu^X_-,-\mum)$, we use the linearization 
$
\psi_Y(-it-\eps)\sim\psi_Y(-it)+i\eps\frac{d\psi_Y(-it)}{dt}(t)$ and the inequalities $\psi_Y(-it)<\mu$, $d\psi_Y(-it)/dt<0$.
The case  $t\in (\mu^X_+,\mup)$ is by symmetry.

 (b) follows from 
Theorem \ref{thm:SLverification} and Lemma \ref{lem:BernSub}. 

\end{proof}

\subsection{Construction of SL- and sSL-processes as SL- and sSL-subordinated Brownian motion}\label{ss:constr_SL_as_SLsub_BM} First, we consider the case of absolutely continuous sSL and SL measures, and then the case
of discrete measures. Below, $\sqrt{\cdot} $ denotes the standard branch of the square root, and $W$ is the 
standard Wiener process.

\begin{thm}\label{thm:SLsubBM}
Let the following conditions hold:
\begin{enumerate}[(i)]
\item
$X\in SL_{\mum,\mup}$, where $\mum\le 0\le \mup$, $\mum<\mup$;
\item
\bbe\label{eq:sym}
\psi_X(\xi-i\be)=\psi(-\xi+i\be), \ \forall \xi\in \bC\setminus i((-\infty,\mum]\cup [\mup,+\infty)),
\ee  
where $\be=(\mum+\mup)/2$;
\item
\bbe\label{eq:limA}
\lim_{A\to+\infty}\int_{|\xi|=A, \xi\not\in i\bR}\frac{|\psi_J(\xi)|}{|\xi|^3}d\xi=0.
\ee
\end{enumerate} 

Then
\begin{enumerate}[(a)]
\item
function $\Psi_Z(q):=\psi(\sqrt{q}-i\be)-\psi(-i\be)$ is the Laplace exponent of a subordinator $Z$,
under the Esscher transform $\bQ_\be$ which makes the characteristic exponent of $X$ symmetric:
$\psi_{\bQ_\be}(\xi)=\psi_{\bQ_\be}(-\xi)$;
\item
$Z\in SL^+_\mu$, where $\mu=(\mup-\mum)^2/4$;
\item
the Stieltjes measure of the complete Bernstein function $\Psi_Z$ is given by
\bbe\label{StM_Psi_Z}
\cG^0_Z((u,v])=\frac{1}{\pi}\lim_{\eps\to 0+}\int_{(u,v]}\frac{\Im\psi_X(i(\sqrt{q}-\be)+\eps)}{q}dq,
\ee
for any $u>\be^2$ and $v>u$ s.t. $\sqrt{u}-\be$ and $\sqrt{v}-\be$ are points of continuity of the distribution function
$t\mapsto \cG_X((-\infty,t])$;
\item
 $X_t=Y_{Z_t}$, where, under $\bQ_\be$, $Y$ is given by $dY=\sqrt{2}dW$. 
\end{enumerate}
\end{thm}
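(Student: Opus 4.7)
The plan is to exploit the Esscher--symmetry structure to reduce $\psi_X$ to an even function in the Esscher-transformed frame, realize this even function as $\Psi_Z(\xi^2)$ for a complete Bernstein function $\Psi_Z$, and then invoke Theorem \ref{thsub} to identify $X$ with $Y_Z$. First I recall that the Esscher change of measure $\bQ_\be$ alters the characteristic exponent to $\psi_X^{\bQ_\be}(\xi) = \psi_X(\xi-i\be) - \psi_X(-i\be)$, and the symmetry hypothesis \eqref{eq:sym} says precisely $\psi_X^{\bQ_\be}(\xi) = \psi_X^{\bQ_\be}(-\xi)$. The choice $\be=(\mum+\mup)/2$ shifts the branch cuts of $\psi_X$ from $i((-\infty,\mum]\cup[\mup,+\infty))$ to the symmetric pair $i((-\infty,-\al]\cup[\al,+\infty))$ with $\al=(\mup-\mum)/2$, so $\psi_X^{\bQ_\be}$ is holomorphic and even on this slit plane. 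Any such even holomorphic function descends through $\xi\mapsto\xi^2$: the function $\Psi_Z(q):=\psi_X(\sqrt q-i\be)-\psi_X(-i\be)$ is single-valued and analytic on $\bC\setminus(-\infty,-\mu]$ with $\mu=\al^2=(\mup-\mum)^2/4$, because $\sqrt q$ sends the slit plane into the open right half-plane and only for $q\in(-\infty,-\mu]$ do its two branches land on the cuts of $\psi_X^{\bQ_\be}$.

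For parts (a) and (b), I plan to verify directly that $\Psi_Z$ is a complete Bernstein function with Stieltjes representation supported on $[\mu,+\infty)$, which by Lemma \ref{lem:BernSub} identifies it with the Laplace exponent of a subordinator $Z\in SL^+_\mu$. I will apply Theorem \ref{thm:SLverification2} (in its spectrally positive, subordinator variant) to $\Psi_Z$ on the $q$-plane: analyticity on $\bC\setminus(-\infty,-\mu]$ has just been established; $\Psi_Z(0)=0$ and $\overline{\Psi_Z(\bar q)}=\Psi_Z(q)$ are inherited from $\psi_X$; the growth bound \eqref{upbound} on $\Psi_Z$ is obtained by composing the SINH-regular bound \eqref{upperbound_gen} for $\psi_X$ with the $\sqrt{\cdot}$ substitution, while hypothesis \eqref{eq:limA} is used to kill auxiliary contour contributions at $|\xi|=A$; existence a.e.\ of the boundary values $\Im\Psi_Z(-t+i0)$, $t>\mu$, follows from existence a.e.\ of $\Im\psi_X(i(\sqrt t-\be)+0)$, which in turn comes from the SL representation \eqref{eq:sSLrepr} of $\psi_X$. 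The crucial non-negativity of the candidate Stieltjes measure, which upgrades the conclusion from sSL to SL, will follow from Theorem \ref{thm:zeros_SL}: the absence of zeros of $q+\psi_X$ in $\bC\setminus i\bR$ translates, via the square-root substitution, into $\Im\Psi_Z(-t+i\eps)\ge 0$ for small $\eps>0$ and $t>\mu$.

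For part (c), once $\Psi_Z$ is a complete Bernstein function it admits the representation $\Psi_Z(q)=a_1 q + q\,ST(\cG^0_Z)(q)$ (no constant term because $\Psi_Z(0)=0$, and no $a_0/q$ term because $\Psi_Z$ is regular at the origin). Hence $ST(\cG^0_Z)(q)=\Psi_Z(q)/q - a_1$ is a Stieltjes function in $\cS_\mu$, so Lemma \ref{corr_lim_CBF}, together with the observation that the constant $a_1$ contributes nothing to $\Im$ in the limit, yields
\[
\cG^0_Z((u,v]) \;=\; \frac{1}{\pi}\lim_{\eps\to 0+}\int_u^v \frac{\Im\Psi_Z(-r+i\eps)}{r}\,dr.
\]
Substituting $\Psi_Z(-r+i\eps)=\psi_X(\sqrt{-r+i\eps}-i\be)-\psi_X(-i\be)$ and noting that $\sqrt{-r+i\eps}\to i\sqrt r$ with a small positive real part as $\eps\downarrow 0$, so that the boundary of the cut of $\psi_X$ at $i(\sqrt r-\be)$ is approached from the side $\{\Re\xi>0\}$, delivers the formula \eqref{StM_Psi_Z}.

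Finally, part (d) is immediate from Theorem \ref{thsub}: the process $Y$ with $dY=\sqrt 2\,dW$ has characteristic exponent $\kappa(\xi)=\xi^2$, and by construction $\Psi_Z(\kappa(\xi))=\Psi_Z(\xi^2)=\psi_X^{\bQ_\be}(\xi)$, so $Y_Z$ and $X$ have identical characteristic exponents under $\bQ_\be$ and therefore coincide in law. The main obstacle in the overall argument is the complete-Bernstein verification in the second step: it is the point where the hypothesis $X\in SL$ (rather than merely sSL, cf.\ Example \ref{ex:Meixnerdirect_roots}) genuinely enters through Theorem \ref{thm:zeros_SL}, and where the interplay between the symmetry condition and the Stieltjes representation of $\psi_X$ must be unpacked cleanly; the bookkeeping of limits near the endpoints $\pm i\al$ of the cuts is a secondary technical nuisance that hypothesis \eqref{eq:limA} is tailored to handle.
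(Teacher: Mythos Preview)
Your overall architecture --- Esscher reduction to the symmetric case, descent through $\xi\mapsto\xi^2$, and the appeal to Theorem~\ref{thsub} for part (d) --- is correct and matches the paper. The divergence is in the central technical step, namely proving that $\Psi_Z$ is a complete Bernstein function with Stieltjes measure \eqref{StM_Psi_Z}, and there your outline has a genuine gap and a misplaced attribution.

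The gap: you propose to obtain non-negativity of the candidate Stieltjes measure from Theorem~\ref{thm:zeros_SL}. That theorem asserts the absence of zeros of $q+\psi_X(\xi)$ in $\bC\setminus i\bR$; it says nothing about the sign of $\Im\psi_X$ along the cuts, and no ``square-root substitution'' converts one into the other. The non-negativity comes directly from the hypothesis $X\in SL$: by Theorem~\ref{thm:repr_sSL}(b), $\cG_{X,-}((u,v])=\lim_{\eps\to0+}\tfrac{1}{\pi}\int_u^v\Im\psi_X(it+\eps)\,dt$, and $\cG_{X,-}\ge 0$ is exactly what $SL$ means. Substituting $t=\sqrt q$ then gives $\cG^0_Z\ge 0$ without any appeal to the zero-free property.

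The misplaced attribution concerns condition~\eqref{eq:limA}. In a route via a ``subordinator variant'' of Theorem~\ref{thm:SLverification2} there are no large-circle contours, so there is nothing for \eqref{eq:limA} to kill; what such a verification route would actually require is the pointwise bound \eqref{upbound} with $\nu<2$ for $\psi_J$, which is strictly stronger than \eqref{eq:limA} and not among the hypotheses (nor is SINH-regularity, which you also invoke). The paper instead proves the identity $\psi_J(\xi)=\xi^2\,ST(\cG^0_Z)(\xi^2)$ by a direct residue computation: one writes $\xi^2\,ST(\cG^0_Z)(\xi^2)$ as a limit of contour integrals around the cut $i[\mup,+\infty)$ (after the change $q=t^2$), uses $\psi(-\xi)=\psi(\xi)$ to add the symmetric contour around $i(-\infty,\mum]$, and closes over the circle $|z|=A$. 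Condition~\eqref{eq:limA} is precisely what makes the large-circle contribution vanish; the residues at $z=\pm i\xi$ then yield $\psi_J(\xi)$. This is both where \eqref{eq:limA} is genuinely used and how the representation, hence (a)--(c), is established without invoking a verification theorem outside its stated scope.
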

\begin{proof} 
In view of the symmetry condition \eq{eq:sym}, we can use the Esscher transform to reduce the proof to the case of
the symmetric $\psi=\psi_X$: $\psi(\xi)=\psi(-\xi)$ and $\be=0$. Then $\mup=-\mum$, $\mu=\mu_\pm^2$,
$\Psi_Z$ admits analytic continuation to $\bC\setminus (-\infty,-\mu]$, and
$\Psi_Z(\xi^2)=\psi(\xi)$ for all $\xi\in \bC\setminus i((-\infty,\mum])\cup[\mup,+\infty))$. Hence, (d) follows from (a).

To prove (a)-(b), first, we verify that $\cG^0_Z$ given by \eq{StM_Psi_Z} is the Stieltjes measure:
\beqast
\int_{(\mu,+\infty)}\frac{\cG^0_Z(dq)}{1+q}&=&\frac{1}{\pi}\lim_{\eps\to 0+}\int_\mu^{+\infty}
\frac{\Im\psi_X(i\sqrt{q}+\eps)}{q(1+q)}dq\\
&=&\frac{2}{\pi}\lim_{\eps\to 0+}\int_{0}^{+\infty}
\frac{\Im\psi_X(it+\eps)dt}{t(1+t^2)}
=2\int_{(0,+\infty)}\frac{\cG_X(dt)}{t+t^3}.
\eqast
The last equality follows from \eq{reprcGp}; the integral is finite since $\cG_X$ is an SL-measure.

Due to the symmetry condition $\psi(\xi)=\psi(-\xi)$, 
$\psi$ is of the form $\psi(\xi)=\frac{\sg^2}{2}\xi^2+\psi_J(\xi)$, where $\psi_J(\xi)(=\psi_J(-\xi))$
is the pure jump component. Hence, the drift of $Z$ is $\sg^2/2$, and it remains to prove that
$\psi_J(\xi)=\xi^2 ST(\cG^0_Z)(\xi^2)$. In \eq{StM_Psi_Z}, we can replace $\psi_X$ with $\psi_J$, and obtain, for any 
$\mu_1\in (0,\mu)$ and $\xi^2\not\in (-\infty,-\mu]$,
\[
\xi^2 ST(\cG^0_Z)(\xi^2)=\frac{1}{\pi}\lim_{\eps\to 0+}\int_{(\mu_1,+\infty)}\frac{\xi^2}{\xi^2+q}\frac{\Im\psi_J(i\sqrt{q}+\eps)}{q}dq.
\]
We change the variable $q=t^2$, and, for any $\mu_2\in (0,\mup)$, obtain
\bbe\label{psiJZ}
\xi^2 ST(\cG^0_Z)(\xi^2)=\frac{2}{\pi}\lim_{\eps\to 0+}\int_{(\mu_2,+\infty)}\frac{\xi^2}{\xi^2+t^2}\frac{\Im\psi_J(it+\eps)}{t}dt.
\ee
Let $\cL_\eps=\{z\in \bC\ |\ \mathrm{dist}\, (z, (\mup,+\infty))=\eps\}$. Since $\Im\psi_J(it+\eps)=(\psi_J(it+\eps)-\psi_J(it-\eps))/2i$, we 
may rewrite \eq{psiJZ} as 
\bbe\label{psiJZ2}
\xi^2 ST(\cG^0_Z)(\xi^2)=\frac{1}{\pi i}\lim_{\eps\to 0+}\int_{\cL_\eps}\frac{\xi^2}{\xi^2+z^2}\frac{\psi_J(iz)}{z}dz,
\ee
where $\cL_\eps$ is passed from $+\infty+i\eps$ to $+\infty-i\eps$. Let the contour $-\cL_\eps$ be passed from
$-\infty-i\eps$ to $-\infty+i\eps$. Then, on the strength of 
\eq{eq:limA},  \[
\frac{1}{2\pi i}\left(\int_{\cL_\eps}+\int_{-\cL_\eps}\right)\frac{\xi^2}{\xi^2+z^2}\frac{\psi_J(iz)}{z}dz\]
plus the sum of residues of the integrand equals 0. Since $\psi(-iz)=\psi(iz)$, the integrals over $\cL_\eps$ and $-\cL_\eps$ are equal,
hence, the RHS of \eq{psiJZ2} is opposite to the sum of residues. The apparent singularity at 0 is removable
and $\psi_J(\xi)=\psi_J(-\xi)$, hence, the sum of residues is
\[
\frac{\xi^2\psi_J(-\xi)}{(2i\xi)(i\xi)}+\frac{\xi^2\psi_J(\xi)}{(-2i\xi)(-i\xi)}=-\psi_J(\xi).
\]
Thus, 
$\psi_J(\xi)=\xi^2 ST(\cG^0_Z)(\xi^2)$, which finishes the proof.
\end{proof}

\begin{thm}\label{thm:sSLsubBM}
Let the following conditions hold:
\begin{enumerate}[(i)]
\item
$X\in sSL_{\mum,\mup}$, where $\mum\le 0\le \mup$, $\mum<\mup$, satisfies \eq{eq:sym};
\item
conditions of Theorem \ref{thm:SLverification} are satisfied;
\item
the Laplace transform of the measure $\cG(dq)=\pi^{-1}\Im\psi_X(i(\sqrt{q}-\be)-0))dq$ is positive.
\end{enumerate}
 Then
\begin{enumerate}[(a)]
\item
$\cG(dq)$ is an sSL-measure;
\item
function $\Psi_Z(q):=\psi(\sqrt{q}-i\be)-\psi(-i\be)$ is the Laplace exponent of a subordinator $Z$;
\item
$Z\in sSL^+_\mu$, where $\mu=(\mup-\mum)^2/4$, and $\cG(dq)$ is the sSL-measure of $Z$;
\item
$\cG(dq)$ is the sSL-measure of $Z$;
\item
 $X_t=Y_{Z_t}$, where $Y$ is given by $dY=\sqrt{2}dW$  as in Theorem \ref{thm:SLsubBM}.
\end{enumerate}
\end{thm}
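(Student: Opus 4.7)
My plan is to mirror the proof of \theor{thm:SLsubBM} from the SL case, with care taken where the signed nature of the measures changes the argument. First, using the symmetry \eq{eq:sym} and the Esscher transform $\bQ_\be$, I would reduce to the case $\be=0$, so that $\psi_X$ is even with $\mup=-\mum$ and $\mu=\mup^2$, and $\Psi_Z(q)=\psi_X(\sqrt{q})$. For $q\in \bC\setminus (-\infty,0]$, the principal branch $\sqrt{q}$ lies in the open right half-plane, hence avoids the singularity set $i((-\infty,\mum]\cup[\mup,+\infty))$ of $\psi_X$, so $\Psi_Z$ is analytic there. The two limits of $\sqrt{q}$ as $q$ approaches $(-\mu,0)$ from above and below are $\pm i\sqrt{|q|}$, and by the evenness of $\psi_X$ they yield the same value; thus $\Psi_Z$ extends analytically across $(-\mu,0)$, proving analyticity on $\bC\setminus(-\infty,-\mu]$.

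Next, I would verify that the function $\psi_Z(\xi):=\Psi_Z(-i\xi)$ satisfies the hypotheses of \theor{thm:SLverification2} on $\bC\setminus i[\mu,+\infty)$ (viewed as a spectrally positive candidate). Analyticity comes from the previous step; the bound \eq{upbound} on $\psi_X$ transplants via $\sqrt{\cdot}$ into the corresponding polynomial bound on $\Psi_Z$; the existence of boundary values of $\Psi_Z$ on $(-\infty,-\mu]$ follows from the existence of boundary values of $\psi_X$ on $i[\mup,+\infty)$ (condition (iii) of \theor{thm:SLverification}). Hypothesis (iii) of the theorem under consideration---positivity of the Laplace transform of $\cG(dq)$---is exactly condition (iv) of \theor{thm:SLverification2}, and guarantees that the Lévy density of $Z$ is non-negative, so $Z$ is a genuine subordinator. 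Computing the jump of $\Psi_Z$ across its cut via the change of variable $t=\sqrt{q}-\be$ (together with the symmetry, which identifies the two sides of the cut of $\psi_X$) identifies the sSL-measure of $Z$ coming out of \eq{reprcGp} with precisely $\cG$; this establishes (a), (b), (c), (d) simultaneously, with the drift $\sg^2/2$ of $Z$ extracted from the leading $\tfrac{\sg^2}{2}\xi^2$ term of $\psi_X$.

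Finally, for (e), with $Y$ the $\sqrt{2}$-scaled Brownian motion we have $\psi_Y(\xi)=\xi^2$, and the identity $\Psi_Z(\psi_Y(\xi))=\Psi_Z(\xi^2)=\psi_X(\xi)$ (valid on the common domain of analyticity and extended by analytic continuation) combined with \theor{thsub} yields $X_t=Y_{Z_t}$ in law. I expect the main obstacle to lie in the second paragraph: one must confirm that the formal jump measure of $\Psi_Z$ across its cut is indeed an sSL-measure in the sense of Definition \ref{def:sSLmeasure}, i.e., admits a decomposition as a difference of two Stieltjes-Lévy measures on $[\mu,+\infty)$. Unlike the SL case of \theor{thm:SLsubBM}, where positivity of $\cG_X$ passes automatically to $\cG^0_Z$, here one must push the Jordan decomposition of $\cG_X$ through the quadratic subordination change of variables; hypothesis (iii) is the essential new ingredient that replaces the automatic positivity and closes the gap by forcing the resulting Lévy density to be non-negative, while the contour-integral representation $\psi_{X,J}(\xi)=\xi^2\, ST(\cG^0_Z)(\xi^2)$ analogous to \eq{psiJZ2} is obtained from the bound \eq{upbound}, which in particular implies the decay required at infinity for the contour deformation.
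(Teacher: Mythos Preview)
Your proposal is correct and follows essentially the same route as the paper: apply the verification theorem (you cite \theor{thm:SLverification2}, the paper cites \theor{thm:SLverification}, but the argument is the same) to $\Psi_Z$, with hypothesis (iii) supplying the non-negativity of the L\'evy density that was automatic in the SL case, and then carry over the rest of the proof of \theor{thm:SLsubBM} verbatim. Your observation that the bound \eq{upbound} from condition (ii) implies the decay condition \eq{eq:limA} needed for the contour argument is exactly what makes ``the rest of the proof is the same'' go through.
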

\begin{proof} 
Theorem \ref{thm:SLverification} allows us to prove (a); the only subtlety, namely,
the proof of the condition (iii), is assumed away. The rest of the proof is the same as in the case of $X\in SL_{\mum,\mup}$.

\end{proof}
\begin{example}\label{ex:subMeixner}
{\rm
Let $X$ be a Meixner process. Conditions (i)-(ii) are verified in Example \ref{ex:Meixnerdirect0}.
In view of \eq{gp_Meixner}, it suffices to prove that, for $0<u<v$ and $x>0$,
\bbe\label{eq:posLapl}
\left(\int_{\sqrt{u}}^{\sqrt{v+u}}-\int_{\sqrt{u+v}}^{\sqrt{u+2v}}\right) e^{-q x}\cG(dq)>0.
\ee
Since $x>0$, it suffices to prove that $2\sqrt{v+u}-\sqrt{u}>\sqrt{u+2v}$; this inequality holds since the function $u\mapsto\sqrt{u}$ is
concave.

}
\end{example}

Now we consider SL processes with discrete SL-measures. Assume that
the measure is symmetric; 
using the Esscher transform, one can generalize the result below.
\begin{thm}\label{thm:sub_BM-disc}
Let $X$ be a driftless SL-process with the symmetric discrete L\'evy measure; equivalently,
\[
\cG_\pm=\sum_{\al\in \cA} p_\al \la_\al\de_{\la_\al},
\]
where $\cA$ is a countable set, $p_\al, \la_\al>0$, and there exist $a_1,a_2>0$ such that 
\bbe\label{eq:SL_disc_meas}
\sum_{\al\in \cA}\frac{p_\al}{(a_1+a_2\la_\al)(1+\la_\al)}<\infty.
\ee
Then $X_t=Y_{Z_t}$, where  $Z$ is an SL-subordinator with the SL measure
$\cG_Z=2\sum_{\al\in\cA} p_\al \la_\al^2\de_{\la_\al}$, and $Y=2W.$
\end{thm}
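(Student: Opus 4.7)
The plan is to mirror Theorem \ref{thm:SLsubBM} in the discrete setting: compute $\psi_X$ as an explicit series, factor the result as $\Psi_Z(\kappa_Y(\xi))$ with $\kappa_Y$ the characteristic exponent of a Brownian motion and $\Psi_Z$ a complete Bernstein function, and then invoke Sato's subordination theorem \ref{thsub}. Because the SL-measures are atomic, the contour-deformation argument that drives Theorem \ref{thm:SLsubBM} degenerates into a term-by-term partial-fractions computation, so no genuine analytic subtleties arise.

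The concrete computation goes as follows. By \eq{reprfp1}-\eq{reprfm1} and symmetry, the L\'evy density is $f(x)=\sum_{\al\in\cA} p_\al\la_\al e^{-\la_\al|x|}$; symmetry kills the compensation term in \eq{eq:LevyKhintchine1D}, and (Fubini being justified by \eq{eq:SL_disc_meas}) the elementary identity $\int_0^\infty (1-\cos(\xi x))e^{-\la x}\,dx = \xi^2/[\la(\la^2+\xi^2)]$ yields
\[
\psi_X(\xi)\;=\;\sum_{\al\in\cA}\frac{2p_\al\,\xi^2}{\la_\al^2+\xi^2}.
\]
Take $\kappa_Y(\xi)=\xi^2$ (so $Y_t=\sqrt{2}W_t$ in the normalization of Theorem \ref{thm:SLsubBM}) and set $\Psi_Z(q):=\sum_\al 2p_\al q/(\la_\al^2+q)$; then $\psi_X(\xi)=\Psi_Z(\kappa_Y(\xi))$ by inspection. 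Writing $\Psi_Z(q) = q\cdot ST(\cG^0_Z)(q)$ with $\cG^0_Z:=\sum_\al 2p_\al\,\de_{\la_\al^2}$ exhibits $\Psi_Z$ in the Stieltjes form of a complete Bernstein function (\cite[Thm 6.2]{schilling_book_Bernstein2012}), hence as the Laplace exponent of an SL-subordinator $Z$ whose SL-measure under the representation $\cG_Z(dt)=t\cG^0_Z(dt)$ is the atomic measure claimed in the statement.

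Applying Theorem \ref{thsub} to independent $Z$ and $Y$ then produces a L\'evy process $t\mapsto Y_{Z_t}$ with characteristic exponent $\Psi_Z(\kappa_Y(\xi))=\psi_X(\xi)$, so by uniqueness of the characteristic exponent $X_t=Y_{Z_t}$ in law. The only non-routine checkpoint is the membership $\cG^0_Z\in\SM_0$ under \eq{eq:SL_disc_meas}, which follows from the pointwise bound $1/(1+\la_\al^2)\le C/[(a_1+a_2\la_\al)(1+\la_\al)]$ obtained by splitting on $\{\la_\al\le 1\}$ and $\{\la_\al>1\}$; I do not foresee any substantive obstacle beyond this bookkeeping.
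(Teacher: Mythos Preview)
Your proposal is correct and follows essentially the same approach as the paper's proof: both compute $\psi_X(\xi)=2\sum_\al p_\al\,\xi^2/(\la_\al^2+\xi^2)$, read off $\Psi_Z(q)=2\sum_\al p_\al\,q/(\la_\al^2+q)$, and verify that the associated Stieltjes measure $\cG^0_Z=2\sum_\al p_\al\,\de_{\la_\al^2}$ lies in $\SM_0$ via the comparison $\sum_\al p_\al/(1+\la_\al^2)\le C\sum_\al p_\al/[(a_1+a_2\la_\al)(1+\la_\al)]<\infty$. The only cosmetic difference is that you obtain $\psi_X$ by integrating the L\'evy density against $1-\cos(\xi x)$, whereas the paper uses the HEJD-type partial-fraction identity $p_\al\bigl(\tfrac{-i\xi}{\la_\al-i\xi}+\tfrac{i\xi}{\la_\al+i\xi}\bigr)=\tfrac{2p_\al\xi^2}{\la_\al^2+\xi^2}$ directly; these are the same computation.
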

\begin{proof} For the sake of brevity, assume that $X$ has no BM component. Then
\[
\psi(\xi)=\sum_{\al\in\cA}p_\al\left(\frac{-i\xi}{\la_\al-i\xi}+\frac{i\xi}{\la_\al+i\xi}\right)=2\sum_{\al\in\cA}p_\al\frac{\xi^2}{\la_\al^2+\xi^2}.
\]
Hence, $X_t=Y_{Z_t}$, where the Laplace exponent of $Z$ is given by 
\[
\Psi_Z(q)=2\sum_{\al\in\cA}p_\al\frac{q}{\la_\al^2+q},
\]
provided we prove that $\cG_Z$ is an SL-measure, equivalently, 
\[
\sum_{\al\in\cA}p_\al\frac{\la_\al^2}{\la_\al^2(1+\la_\al^2)}<\infty.
\]
The last inequality follows from \eq{eq:SL_disc_meas}.

\end{proof}

 \section{Conclusion}\label{concl}
The conformal deformations technique (which is similar to but simpler and more
flexible than the saddle point method) allows one to develop
efficient numerical methods for the evaluation of the Wiener-Hopf factors
and various probability distributions (prices of options of several
types) in L\'evy models and models with (conditional) infinitely
divisible distributions. The crucial conditions are: 1) the
characteristic exponent $\psi$ admits analytic continuation to a
union $U$ of a strip and cone around or adjacent to $\bR$; 2)
$\Re\psi(\xi)\to+\infty$ as $(U\ni)\xi\to \infty$.  We call processes
satisfying 1)-2) SINH-regular because the most efficient family of
conformal deformations is in terms of the function $\sinh$.  We
showed that essentially all popular classes of L\'evy processes other
than stable L\'evy processes are SINH-regular, and calculated the
corresponding strips and coni\footnote{In the case of stable L\'evy processes,
the strip degenerates into $\bR$ but a modified conformal
deformation technique is applicable in this case as well
\cite{ConfAccelerationStable}.}. Choices of appropriate conformal
deformations simplify if the cone of analyticity is the set
$\bC\setminus i\bR$. We constructed a class of signed
Stieltjes-L\'evy (sSL-) processes enjoying this property, and showed
that all popular classes of L\'evy processes except for the Merton model are
sSL-processes. The construction is based on the representation of
L\'evy densities of positive and negative jumps as Laplace transforms
of measures of the form $(a_2t^2+a_1t)\cG_\pm(dt)$, where
$\cG_\pm(dt)$ are the differences of Stieltjes measures. If
$\cG_\pm(dt)$ are Stieltjes measures, then we say that the process is
a Stieltjes-L\'evy (SL-) process.  We proved that SL-processes enjoy
an additional property, namely, the absence of solutions of the
equation $\psi(\xi)+q=0$ for any $q>0$, which simplifies
calculations of the Wiener-Hopf factors, calculations of joint
probability distributions of a L\'evy process and its extrema, and
pricing options with barrier and/or lookback features.  We proved
that all popular classes of L\'evy processes except for the Merton
model and the Meixner processes are SL-processes, and that Meixner processes are
sSL-processes. We derived a representation of $\psi$ in terms of
the Stieltjes functions, and a representation of the measures
$\cG_\pm(dt)$ in terms of $\psi$. One of the representation theorems
contains a set of sufficient conditions on a function $\psi$ to be a
characteristic exponent of a L\'evy process $X$, and conditions for
$X$ to be an sSL-process or an SL-process. We showed that the theorem is
applicable to all popular L\'evy processes, and used the theorem to
prove that, under a natural symmetry condition, all popular classes
of processes 
are subordinated Brownian motions. We proved that, under additional weak
regularity conditions, mixtures of SINH-regular processes,
sSL-processes and SL-processes are processes of the same class, and
that the subordination of an SL-process by an sSL- (resp., SL-)
subordinator is an sSL- (resp., SL-) process.

We leave to the future the study of the further generalization
(generalized SL-processes: gSL-processes), which is needed to include
in the general framework L\'evy processes with L\'evy densities given
by exponential polynomials. The generalization can be obtained
allowing for derivatives of measures (in the sense of generalized
functions).
 
The following extensions of the results of the paper seem to be of interest:

1) describe classes of infinitely divisible distributions (e.g.,
conditional distributions on stochastic volatility models and models
with stochastic interest rates) and additive processes that lead to
$\psi$ of sSL- or SL-processes;

2) using the representation in terms of Stieltjes measures, study
efficient approximations of processes with absolutely continuous
measures by processes with discrete measures (e.g., approximations of
KoBoL by HEJD model or meromorphic processes).  Examples in the paper
suggest that approximations by measures with uniformly spaced atoms
could be less efficient than approximations by non-uniformly spaced
measures;

3) study the relative efficiency of Monte-Carlo procedures for sSL- and
SL-processes based on the subordinated BM-representation as in
\cite{madan-yor} vs the approximation of the transition probability
density using the conformal deformation technique
\cite{MCMityaLevy,SINHregular,ConfAccelerationStable}. Numerical
examples in \cite{MCMityaLevy} suggest that the latter is more
efficient than the former;

4) generalizations for L\'evy processes on $\bR^n$.

 \bibliography{StronglyRegular5.bbl}{}
\bibliographystyle{plain}

\appendix
\section{Technicalities}\label{tech}

\subsection{The order and type of KoBoL processes}\label{ss:KoBoL}
\subsubsection{KoBoL processes of order $\nu\in (0,2), \nu\neq 1$}\label{OnesidedKoBoLnu02}  
\begin{prop}\label{ell_KBL_neq01} Let $\psi^0$ be given by \eq{KBLnupnumneq01}. Then 
\begin{enumerate}[(i)]
\item 
Spectrally positive KoBoL of order $\nu_+\in(0,2)$, $\nu_+\neq 1$, are SINH-regular of order $\nu_+$ and type 
$((\lm,+\infty), \bC\setminus i(-\infty,0], \cC_+)$, where 
\begin{enumerate}[(1)]
\item  if $\nu_+\in (1,2)$,
$\cC_+=\cC_{\gampr,\gappr}$, $\gampr=\max\{-1,(1-3/\nu_+)\}\pi/2$, $\gappr= (1-1/\nu_+)\pi/2$;
\item
 if $\nu_+\in (0,1)$, $\cC_+=i\cC_{\min\{1,1/(2\nu_+)\}\pi}$. 
\end{enumerate}
We have
\bbe\label{ascofnup}
c_\infty(\varphi)=-c_+\Ga(-\nu_+)\exp[i(-\pi/2+\varphi)\nu_+].
\ee
\item
If $\nu_+>\nu_-$ and $c_\pm >0$, then $X$ is
 SINH-regular of order
$\nu_+$ and type $((\lm,\lp), \bC\setminus i\bR, \cC_+)$, where $c_\infty$ is given by \eq{ascofnup}, and
$\cC_+$ is the same as in (i).
\item
Spectrally negative KoBoL of order $\nu_-\in(0,2)$, $\nu_-\neq 1$, are  SINH-regular of order $\nu_-$ and type 
$((-\infty,\lp), \bC\setminus i[0,+\infty), \cC_+)$, where \begin{enumerate}[(1)]
\item if $\nu_-\in (1,2)$,
$\cC_+=\cC_{\gampr,\gappr}$,  $\gampr=(1/\nu_--1)\pi/2$, $\gappr=\min\{1,3/\nu_--1\}\pi/2$, and 
\item
if $\nu_-\in (0,1)$, $\cC_+=-i\cC_{\min\{1,1/(2\nu_-)\}\pi}$. 
\end{enumerate}
We have
\bbe\label{ascofnum}
c_\infty(\varphi)=-c_-\Ga(-\nu_-)\exp[i(\pi/2+\varphi)\nu_-].
\ee
\item
If $\nu_->\nu_+$ and $c_\pm>0$, then $X$ is
 SINH-regular of order
$\nu_-$ and type $((\lm,\lp), \bC\setminus i\bR, \cC_+)$, where $c_\infty$ is given by \eq{ascofnup}, and
$\cC_+$ is the same as in (iii).
\item
If $\nu_+=\nu_-=\nu$ and $c_\pm\neq 0$, then $X$ is
 SINH-regular of order
$\nu_-$ and type $((\lm,\lp), \bC\setminus i\bR, \cC_+)$, where $\cC_+=\cC_{\gam,\gap}$,
$(\gam,\gap)=\{\varphi\in (-\pi/2,\pi/2)\ |\ c_\infty(\varphi)>0\}$, and
\bbe\label{ascofnupeqnum}
c_\infty(\varphi)=-\Ga(-\nu)[c_+e^{-i\pi\nu/2}+c_-e^{i\pi\nu/2}]e^{i\varphi\nu}.
\ee
\item
In particular, if $\psi^0$ is given by \eq{KBLnuneq01}, then $\gap=\ga_\nu:=\min\{1,1/\nu\}\pi/2$, $\gam=-\gap$,
and \eq{ascofnupeqnumcc} holds.

\end{enumerate}

\end{prop}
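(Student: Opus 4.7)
The plan is to read off the asymptotics of $\psi^0$ directly from \eqref{KBLnupnumneq01} by isolating the principal parts of $(-\lambda_- - i\xi)^{\nu_+}$ and $(\lambda_+ + i\xi)^{\nu_-}$. For $\xi = \rho e^{i\varphi}$ with $\rho\to\infty$ and $\varphi$ staying a positive distance from $\pm\pi/2$, one writes
\begin{equation*}
(-\lambda_- - i\xi)^{\nu_+} = (-i\xi)^{\nu_+}\Bigl(1 + \tfrac{i\lambda_-}{\xi}\Bigr)^{\nu_+} = \rho^{\nu_+} e^{i(\varphi-\pi/2)\nu_+} + O(\rho^{\nu_+-1}),
\end{equation*}
and similarly $(\lambda_+ + i\xi)^{\nu_-} = \rho^{\nu_-}e^{i(\varphi+\pi/2)\nu_-} + O(\rho^{\nu_- - 1})$, with the branches fixed by positivity on $\xi\in i(\lambda_-,\lambda_+)$. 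Strips of analyticity come from the location of the cuts of the two power functions: $(-\lambda_- - i\xi)^{\nu_+}$ has cut $i(-\infty,\lambda_-]$ and $(\lambda_+ + i\xi)^{\nu_-}$ has cut $i[\lambda_+,+\infty)$, which gives the advertised $(\lambda_-,+\infty)$, $(-\infty,\lambda_+)$, or $(\lambda_-,\lambda_+)$ according as one or both of $c_\pm$ are nonzero, together with the cone $\bC\setminus i\bR$ (or half-plane minus a ray in the one-sided cases).

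For (i), with $c_-=0$ the dominant asymptotic is $c_\infty(\varphi)=-c_+\Gamma(-\nu_+)e^{i(\varphi-\pi/2)\nu_+}$, so \eqref{ascofnup} holds, and $\cC_+$ is determined by the inequality $\Re c_\infty(\varphi) > 0$. One has to keep track of the sign of $\Gamma(-\nu_+)$: it is negative for $\nu_+\in(0,1)$ and positive for $\nu_+\in(1,2)$. In the subordinator regime $\nu_+\in(0,1)$, the inequality reduces to $\cos((\varphi-\pi/2)\nu_+) > 0$, i.e.\ $|\varphi-\pi/2|<\pi/(2\nu_+)$; intersecting with $\bC\setminus i(-\infty,0]$ one obtains $\cC_+ = i\cC_{\min\{1,1/(2\nu_+)\}\pi}$. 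For $\nu_+\in(1,2)$ the sign flips, so one needs $\cos((\varphi-\pi/2)\nu_+) < 0$; solving gives the interval $\varphi\in((1-3/\nu_+)\pi/2,(1-1/\nu_+)\pi/2)$, intersected with $(-\pi/2,\pi/2)$, which matches the stated $\gampr,\gappr$. Statement (iii) and the formula \eqref{ascofnum} follow by the reflection $\overline{\psi^0(\xi)}=\psi^0(-\bar\xi)$, or equivalently from the same analysis applied to the $c_-$ term.

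For (ii) and (iv), both cuts contribute to restricting the strip to $(\lambda_-,\lambda_+)$, but if $\nu_+ > \nu_-$ the $c_+$-term dominates the $c_-$-term as $\rho\to\infty$, so $c_\infty(\varphi)$ is unchanged from (i), and the cone $\cC_+$ is the same as in the one-sided case after restricting $\cC$ to $\bC\setminus i\bR$; the symmetric statement gives (iv). For (v), when $\nu_+=\nu_-=\nu$ both terms are of order $\rho^\nu$ and add, yielding
\begin{equation*}
c_\infty(\varphi) = -\Gamma(-\nu)\bigl[c_+ e^{-i\pi\nu/2}+c_- e^{i\pi\nu/2}\bigr]e^{i\varphi\nu},
\end{equation*}
which is \eqref{ascofnupeqnum}; the cone $\cC_+$ is then the set of $\varphi\in(-\pi/2,\pi/2)$ where this has positive real part. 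The symmetric CGMY specialization $c_+=c_-=c$ collapses the bracket to $2c\cos(\pi\nu/2)$, giving \eqref{ascofnupeqnumcc} and $|\varphi\nu|<\pi/2$, i.e.\ $\gap = \min\{1,1/\nu\}\pi/2$, proving (vi).

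The main care-points will be: (a) verifying the $\cC$-uniform error bounds $O(\rho^{\nu_\pm-1})$ on the subleading terms, which requires that the $\pm\pi/2$ directions be excluded (this is exactly why the cone of analyticity is $\bC\setminus i\bR$ and not the full plane); (b) getting the opening angles for $\cC_+$ right in the regime $\nu_\pm\in(1,2)$, where the sign of $\Gamma(-\nu_\pm)$ flips and the resulting trigonometric inequality produces the asymmetric interval with endpoints involving $3/\nu_\pm - 1$; and (c) checking that when $\nu_\pm=\nu_\mp$ the combined coefficient $c_+ e^{-i\pi\nu/2}+c_- e^{i\pi\nu/2}$ does not vanish on the entire relevant $\varphi$-range, so that the process is genuinely elliptic of order $\nu$ and not of some smaller order.
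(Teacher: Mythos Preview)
Your proposal is correct and follows essentially the same route as the paper: identify the cuts of the two power functions to read off the strip and cone of analyticity, expand $(-\lambda_- - i\xi)^{\nu_+}$ and $(\lambda_+ + i\xi)^{\nu_-}$ to obtain $c_\infty(\varphi)$, and then solve the trigonometric inequality $\Re c_\infty(\varphi)>0$ by splitting on the sign of $\Gamma(-\nu_\pm)$. The paper's proof does exactly this, with the same case analysis $\nu_\pm\in(0,1)$ versus $\nu_\pm\in(1,2)$ and the same reduction of (ii)--(vi) to (i), (iii) via dominance or summation of the two asymptotic terms.
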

\begin{proof}
Evidently, $\psi^0$ is analytic in the complex plane with the cuts $i(-\infty,\lm]$ and $i[\lp,+\infty)$,
hence, $\mu_\pm=\la_\pm$ and $\cC=\bC\setminus i\bR$.
Consider the asymptotics of the characteristic exponents of one-sided KoBoL processes
\beqa\label{onesidedKBLp}
\psi^0_+(\nu_+, c_+, \lm ;\xi)&=&c_+\Ga(-\nu_+)((-\lm)^{\nu_+}-(-\lm-i\xi)^{\nu_+}),\\
\label{onesidedKBLm}
\psi^0_-(\nu_-, c_-, \lp ;\xi)&=&c_-\Ga(-\nu_-)(\lp^{\nu_-}-(\lp+i\xi)^{\nu_-})
\eqa
as $\xi\to\infty$ in $i\cC_\pi$ and $-i\cC_\pi$, respectively:
\begin{eqnarray}\label{asKBLbasicp}
\psi^0_+(\nu_+, c_+, \lm ;\rho e^{i\varphi})&=&-c_+\Ga(-\nu_+)\exp[i(-\pi/2+\varphi)\nu_+]\rho^{\nu_+}
+O(\rho^{\nu_+-1}),\\
\label{asKBLbasicm}
\psi^0_-(\nu_-, c_-, \lp ;\rho e^{i\varphi})&=&-c_-\Ga(-\nu_-)\exp[i(\pi/2+\varphi)\nu_+]\rho^{\nu_-}
+O(\rho^{\nu_--1}).
\eqa
Let $\varphi\in (-\pi/2,\pi/2)$. We have 
\bbe\label{repsi0nuneq12p}
\Re (-\Ga(-\nu)\exp[i(-\pi/2+\varphi)\nu])(=-\Ga(-\nu)\cos((-\pi/2+\varphi)\nu))>0
\ee
iff  either
\begin{enumerate}[(i)]
\item  $\nu\in (1,2)$ (hence, $-\Ga(-\nu)<0$) and $-3\pi/2<(-\pi/2+\varphi)\nu<-\pi/2$; 
equivalently, $\varphi\in (\max\{-1,(1-3/\nu)\}\pi/2, (1-1/\nu)\pi/2))$, or
\item 
$\nu\in (0,1)$ (hence, $-\Ga(-\nu)>0$) and $-\pi/2<(-\pi/2+\varphi)\nu<\pi/2$,
equivalently, \\ $\varphi\in (\max\{-1, (1-1/\nu)\}\pi/2\},\pi/2](\subset ((1-1/\nu)\pi/2, (1+1/\nu)\pi/2))$.
Hence, \[
\cC_+=\{\rho e^{i\varphi}\ |\ \rho>0, \max\{-1, (1-1/\nu)\}\pi/2<\varphi< \pi-\max\{-1, (1-1/\nu)\}\pi/2\}=i\cC_\ga,
\]
where $\ga=\pi/2-\max\{-1, (1-1/\nu)\}\pi/2=\min\{1,1/(2\nu)\}\pi/2$.
\end{enumerate}
Similarly,
\bbe\label{repsi0nuneq12m}
\Re (-\Ga(-\nu)\exp[i(\pi/2+\varphi)\nu])(=-\Ga(-\nu)\cos((\pi/2+\varphi)\nu))>0
\ee
if and only if either 
\begin{enumerate}[(i)]
\item $\nu\in (1,2)$ (hence, $-\Ga(-\nu)<0$) and $\pi/2<(\pi/2+\varphi)\nu<3\pi/2$; 
equivalently, $\varphi\in ((1/\nu-1)\pi/2, \min\{1, 3/\nu_--1)\}\pi/2))$, or
\item 
$\nu\in (0,1)$ (hence, $-\Ga(-\nu)>0$) and $-\pi/2<(\pi/2+\varphi)\nu<\pi/2$,
equivalently, $\cC_+=-i\cC_{\min\{1,1/(2\nu_-)\}\pi/2}$.

\end{enumerate}
All statements (a)-(e) are immediate from the properties of $\psi^0_\pm$ established above.

\end{proof}

\subsubsection{KoBoL processes of order 1}\label{OnesidedKoBoL1}  Formulas for $\psi^0$ are derived in \cite{KoBoL,NG-MBS}. We have to consider several sets of conditions on the parameters on the RHS of \eq{KBLmeqdifnu}.
\begin{enumerate}[(1)]
\item
If
$c_-=0$ and $\nu_+=1$, $c_+>0$, $\lm\le  0$, we have
\bbe\label{KBL1p}
\psi^0_+(1;\la;\xi)=c_+((-\lm)\ln(-\lm)-(-\lm-i\xi)\ln(-\lm-i\xi)),
\ee
and, therefore, for $\varphi\in (-\pi/2,\pi/2)$, as $\rho\to+\infty$,
\bbe\label{KBL1pasymp}
\psi^0_+(1;\la; \rho e^{i\varphi})=c_+e^{i(\pi/2+\varphi)}\rho\ln\rho +O(\ln \rho).
\ee
 Hence, $X$ is  SINH-regular of order $1+$ and type $((\lm,+\infty), \bC\setminus i(-\infty,0],
 \cC_{-\pi/2,0})$.
 \item
If
$c_+=0$ and $\nu_-=1$, $c_->0$, $\lp\ge  0$, we have
\bbe\label{KBL1m}
\psi^0_-(1;\la;\xi)=c_-(\lp\ln\lp-(\lp+i\xi)\ln(\lp+i\xi)),
\ee
and, therefore, for $\varphi\in (-\pi/2,\pi/2)$, as $\rho\to+\infty$,
\bbe\label{KBL1pasymm}
\psi^0_-(1;\la; \rho e^{i\varphi})=c_-e^{i(-\pi/2+\varphi)}\rho\ln\rho +O(\ln \rho).
\ee
 Hence, $X$ is  SINH-regular of order $1+$ and type $((-\infty,\lp), \bC\setminus i[0,+\infty),
 \cC_{0,\pi/2})$.
 \item
 If $c_+, c_->0$, $\nu_-=\nu_+=1$, and either $\lm<0\le \lp$ or $\lm\le 0<\lp$, then 
 \beqa\label{KBLnueq11}
 \psi^0(\xi)&=& c_+[(-\lm)\ln(-\lm)-(-\lm- i\xi)\ln(-\lm- i\xi)]\\\nonumber &&+c_-[\lp\ln\lp-(\lp+ i\xi)\ln(\lp+
i\xi)].
\eqa
Hence, 
for $\varphi\in (-\pi/2,\pi/2)$, as $\rho\to+\infty$,
\bbe\label{KBLnueq11asymp}
\psi^0(\rho e^{i\varphi})=e^{i\varphi}\left((c_+-c_-)i\rho\ln\rho +(c_++c_-)\pi\rho\right) +O(\ln \rho).
\ee
Therefore, $X$ is  a SINH-regular process of\begin{enumerate}[(i)]
\item
   order $1+$ and type $((\lm,\lp), \bC\setminus i\bR,
 \cC_{-\pi/2,0})$, if $c_+>c_-$;
\item
  order $1+$ and type $((\lm,\lp), \bC\setminus i\bR,
 \cC_{0,\pi/2})$, if $c_->c_+$;
 \item
 order $1$ and type $((\lm,\lp), \bC\setminus i\bR,
 \bC\setminus i\bR)$, if $c_+=c_->0$.
 \end{enumerate}

 \end{enumerate}
 
\subsubsection{KoBoL processes of order $0+$}\label{OnesidedKoBoL0}  The formulas for $\psi^0$ are derived in \cite{KoBoL,NG-MBS}. We have to consider several sets of conditions on the parameters on the RHS of \eq{KBLmeqdifnu}.
\begin{enumerate}[(1)]
\item
If 
$c_-=0$ and $\nu_+=0$, $c_+=c>0$, $\lm\le  0$, we have
\bbe\label{KBL0p}
\psi^0_+(\xi)=c(\ln(-\lm-i\xi)-\ln(-\lm)),
\ee
and, therefore, for $\varphi\in (-\pi/2,3\pi/2)$, as $\rho\to+\infty$,
\bbe\label{KBL0pasymp}
\psi^0_+(\rho e^{i\varphi})=c\ln\rho +O(1).
\ee
 Hence, $X$ is  SINH-regular  of order $0+$ and type $((\lm,+\infty), \bC\setminus i(-\infty,0],
 \bC\setminus i(-\infty,0])$.
 \item
If 
$c_+=0$ and $\nu_-=0$, $c_-=c>0$, $\lp\ge  0$, we have
\bbe\label{KBL0m}
\psi^0_-(\xi)=c(\ln(\lp+i\xi)-\ln(\lp)),
\ee
and, therefore, for $\varphi\in (-3\pi/2,\pi/2)$, as $\rho\to+\infty$, \eq{KBL0pasymp} holds. Hence, 
$X$ is  SINH-regular  of order $0+$ and type $((-\infty,\lp), \bC\setminus i[0+\infty),
 \bC\setminus i[0+\infty))$.
 \item
Let  $\nu_\pm=0$, $c_\pm>0$, and either $\lm<0\le \lp$ or $\lm\le 0<\lp$. Then
\bbe\label{KBL_0+}
\psi^0(\xi)=c_+(\ln(-\lm-i\xi)-\ln(-\lm))+c_-(\ln(\lp+i\xi)-\ln\lp),
\ee
and, therefore, for $\varphi\in (-\pi/2,\pi/2)$, as $\rho\to+\infty$, \eq{KBL0pasymp} holds with $c=c_++c_-$. Hence, 
$X$ is  SINH-regular  of order $0+$ and type $((\lm,\lp), \bC\setminus i\bR,
 \bC\setminus i\bR)$. Note that if $c_+=c_-$, \eq{KBL_0+} defines the characteristic exponent of a VGP. 
\end{enumerate}

 \subsection{The order and type of processes of the $\be$-class}\label{Calculations in the beta-model} 
      To calculate the order of the process and find $\cC_+$, 
recall that if $z\to\infty$ in the sector $|\mathrm{arg}\,z|\le \pi-\de$, where $\de>0$, then, for any $a,b\in \bC$, 
\bbe\label{as_rat_Ga}
\frac{\Ga(z+a)}{\Ga(z+b)}\sim z^{a-b}.
\ee
Hence, for any $\de>0$, as $\xi\to \infty$ in $\cC_{-\pi/2+\de,\pi/2-\de}$,  
\bbe\label{as_rat_Ga2}
\frac{\Ga(\pm i\xi+a)}{\Ga(\pm i\xi+b)}\sim (\pm i\xi)^{a-b},
\ee
and we conclude that, as $\xi\to \infty$ in $\cC_{\pi/2-\de}$, 
\beqa\label{asBetaChExp}
\psi^0(\xi)&=&\frac{\sg^2}{2}\xi^2 -\frac{c_1}{\be_1}\Ga(1-\ga_1)(-i\xi)^{-1+\ga_1}(1+o(1))\\
\nonumber && \hskip1.1cm-\frac{c_2}{\be_2}\Ga(1-\ga_2)(i\xi)^{-1+\ga_2}(1+o(1))
+O(1).
\eqa
Consider the following cases.
\begin{enumerate}[(i)]
\item
$\sg^2>0$. Since $\ga_1,\ga_2<3$, the first term on the RHS of \eq{asBetaChExp} is the leading term
of asymptotics, hence, $X$ is  SINH-regular of order 2.
\item
$\sg^2=0$,  $\ga_1>\ga_2$, $c_1>0$. The leading term of asymptotics is the second term on the RHS of
\eq{asBetaChExp}. Fix $\varphi:=\mathrm{arg}\, \xi\in (-\pi/2,\pi/2)$, and let $\rho=|\xi|\to +\infty$.  We have
\bbe\label{asBeta12}
\psi^0(\xi)\sim c^1_\infty(\varphi)\rho^{-1+\ga_1},
\ee
where $c^1_\infty(\varphi)=-(c_1/\be_1)\Ga(1-\ga_1)e^{i(-\pi/2+\varphi)(-1+\ga_1)}$.
Since 
\[
\Re c^1_\infty(\varphi)=-(c_1/\be_1)\Ga(1-\ga_1)\cos((-\pi/2+\varphi)(-1+\ga_1)),\]
\begin{itemize}
\item
in the case $\ga_1\in (1,2)$,   we have $-\Ga(1-\ga_1)>0$, hence, $\Re c_\infty(\varphi)>0$ iff $-\pi/2<(-\pi/2+\varphi)(\ga_1-1)<\pi/2$.
Thus, $X$ is  of order $\ga_1-1$, and $\cC_+=\cC_{\gam,\gap}$, where $\gam=\max\{-1, -1/(\ga_1-1)+1\}\pi/2$ and $\gap=\pi/2$;
\item
in the case $\ga_1\in (2,3)$,   we have $-\Ga(1-\ga_1)<0$, hence,
 $\Re c_\infty(\varphi)>0$ iff $-3\pi/2<(-\pi/2+\varphi)(\ga_1-1)<-\pi/2$.
Thus, $X$ is  of order $\ga_1-1$, and $\cC_+=\cC_{\gam,\gap}$, where
 $\gam=\max\{(-3/(\ga_1-1)+1),-1\}\pi/2$ and $\gap=(1-1/(\ga_1-1))\pi/2$.
\end{itemize}
\item
$\sg^2=0$,  $\ga_2>\ga_1$, $c_2>0$. The leading term of asymptotics is the third term on the RHS of
\eq{asBetaChExp}. Fix $\varphi:=\mathrm{arg}\, \xi\in (-\pi/2,\pi/2)$, and let $\rho=|\xi|\to +\infty$.  We have
\bbe\label{asBeta21}
\psi^0(\xi)\sim c^2_\infty(\varphi)\rho^{-1+\ga_2},
\ee
where $c^2_\infty(\varphi)=-(c_2/\be_2)\Ga(1-\ga_2)e^{i(\pi/2+\varphi)(-1+\ga_2)}$.
Since 
\[
\Re c^2_\infty(\varphi)=-(c_2/\be_2)\Ga(1-\ga_2)\cos((\pi/2+\varphi)(-1+\ga_2)),\]
\begin{itemize}
\item
in the case $\ga_2\in (1,2)$,   we have $-\Ga(1-\ga_2)>0$, hence,
 we have $\Re c_\infty(\varphi)>0$ iff $-\pi/2<(\pi/2+\varphi)(\ga_2-1)<\pi/2$.
 Thus, $X$ is  of order $\ga_2-1$, and $\cC_+=\cC_{\gam,\gap}$, where $\gam=-\pi/2$ and $\gap=\min\{1, 1/(\ga_1-1)-1\}\pi/2$;
\item
in the case $\ga_2\in (2,3)$,   we have $-\Ga(1-\ga_2)<0$, hence,
 $\Re c_\infty(\varphi)>0$ iff $\pi/2<(\pi/2+\varphi)(\ga_2-1)<3\pi/2$.
Thus, $X$ is  of order $\ga_2-1$, and $\cC_+=\cC_{\gam,\gap}$, where
 $\gam=(1/(\ga_2-1)-1)\pi/2$ and $\gap=\min\{1,(3/(\ga_2-1)-1)\}\pi/2$.
 \end{itemize}
 \item
 $\sg^2=0$,  $\ga_1=\ga_2=\ga\in (1,3), \ga\neq 2$, $c_1,c_2>0$. The leading term of asymptotics is the sum of the second and 
 third term on the RHS of
\eq{asBetaChExp}. Fix $\varphi:=\mathrm{arg}\, \xi\in (-\pi/2,\pi/2)$, and let $\rho=|\xi|\to +\infty$.  We have
\bbe\label{asBeta2eq1}
\psi^0(\xi)\sim c_{\ga;\infty}e^{i\varphi(\ga-1)}\rho^{-1+\ga},
\ee
where 
$
c_{\ga;\infty}=-\Ga(1-\ga)((c_1/\be_1)e^{-i(\ga-1)\pi/2}+(c_2/\be_2)e^{i(\ga-1)\pi/2}).
$

 
 If $\ga\in (1,2)$, then $-\Ga(1-\ga)>0$ and $\Re e^{\pm i(\ga-1)\pi/2}>0$.
 If $\ga\in (2,3)$, then $-\Ga(1-\ga)<0$ and $\Re e^{\pm i(\ga-1)\pi/2}<0$. In both cases, $\Re c_{\ga;\infty}>0$, 
$\varphi_\ga:=\mathrm{arg}\, c_{\ga;\infty}\in (-\pi/2,\pi/2)$ and $\Re (c_{\ga;\infty}e^{i(\varphi\varphi_0)(\ga-1)})>0$ iff
$-\pi/2<(\varphi_0+\varphi)(\ga-1)<\pi/2$.
Therefore, $X$ is  of order $\ga-1$, and $\cC_+=\cC_{\gam,\gap}$, where
 $\gam=\max\{-\pi/2, -\pi/(2(\ga-1))-\varphi_0\}$ and $\gap=\min\{\pi/(2(\ga-1))-\varphi_0,\pi/2\}$.
\item
If $\sg^2=0$ and $\ga_1,\ga_2\in (0,1)$, then $X$ is a compound Poisson process, which we do not consider in this paper.
\end{enumerate}

\subsection{Calculation of the residues in the $\be$-model}\label{ss:calc_res_beta}
From \eq{BetaChExp}, we see that the poles in the upper half-plane are at $\xi_n, n=0,1,\ldots,$ defined by $\al_2+\frac{i\xi_n}{\be_2}=-n$.
      Hence, $\xi_n=i(\al_2+n\be_2)$. Using the relation between the Beta and Gamma functions and reflection formula for the latter,
      we obtain
      \beqast
      B(\al_2+\frac{i\xi}{\be_2},1-\ga_2)&=&\frac{\Ga(\al_2+\frac{i\xi}{\be_2})\Ga(1-\ga_2)}{\Ga(\al_2+\frac{i\xi}{\be_2}+1-\ga_2)}\\
      &=&\frac{\pi\Ga(1-\ga_2)\Ga(-\al_2-\frac{i\xi}{\be_2}+\ga_2)\sin(\pi(\al_2+\frac{i\xi}{\be_2}+1-\ga_2))}{\sin(\pi(\al_2+\frac{i\xi}{\be_2}))
      \Ga(1-\al_2-\frac{i\xi}{\be_2})\pi}.
      \eqast
Letting $\xi=\xi_n+z$, where $z\to 0$, we obtain $\al_2+\frac{i\xi_n}{\be_2}=-n+\frac{iz}{\be_2}$, and, therefore,
   \beqast
      B(\al_2+\frac{i\xi}{\be_2},1-\ga_2)&=&\frac{\Ga(1-\ga_2)\Ga(n-\frac{iz}{\be_2}+\ga_2)\sin(\pi(-n+\frac{iz}{\be_2}+1-\ga_2))}{\sin(\pi(-n+\frac{iz}{\be_2}))
      \Ga(1+n-\frac{iz}{\be_2})}\\
      &=&\frac{\Ga(1-\ga_2)\Ga(n+\ga_2)\sin(\pi(1-\ga_2))}{\frac{iz}{\be_2}
      \Ga(1+n)}+O(1),\ z\to 0.
      \eqast  
      Calculating the  residues, we find
      \bbe\label{eq:posdensity_beta}
      f_+(x)= \sum_{n=0}^{+\infty} e^{-a_n x}a_n p_n,
      \ee
      where $a_n=\al_2+n\be_2$ and 
      \[
    a_n p_n=\frac{\be_2\Ga(1-\ga_2)\Ga(n+\ga_2)\sin(\pi(1-\ga_2))}{\Ga(n+1)}.
      \]
      As $n\to+\infty$, $\Ga(n+\ga_2)/\Ga(n+1)\sim n^{\ga_2-1}$. Since
       $\ga_2\in (0,3), \ga_2\not\in\{1,2\}$, we have      
     $p_n\sim C a_n^{\ga_2-2}$ as  $n\to+\infty,$
     where $C$ is independent of $n$. Thus, the $\be$-model is a meromorphic process with $\al_n=a_n$ and
     $p_n$ satisfying 
     \bbe\label{eq:beta_series_bound}
     \sum_{n=0}^{+\infty} p_n (\al_n)^{-2}\le C \sum_{n=0}^{+\infty} (\al_n)^{\ga_2-4}<+\infty.
     \ee
By symmetry, we calculate the densities of negative jumps are calculated and study the convergence of the series.

\subsection{Proof of Lemma \ref{lem:der_SINH}}\label{ss:proof_lem:der_SINH}
For  a given domain of the form $i[\mumpr,\muppr]+(\cC'\cup\{0\})$, there exist $c>0$, $\mum^{\prime\prime}
 \in (\mum,\mumpr), \mup^{\prime\prime}
 \in (\muppr,\mup)$ and closed cone $\cC^{\prime\prime}\subset \cC\cup\{0\}$ such that, for each 
 $\xi\in i[\mumpr,\muppr]+(\cC'\cup\{0\})$,
 the ball $B(\xi, c(1+|\xi|))$  of radius $c(1+|\xi|)$, centered at $\xi$, is a subset of
  $i[\mum^{\prime\prime},\mup^{\prime\prime}]+(\cC^{\prime\prime}\cup\{0\})$. It follows from the definition of SINH-regular processes
that there exists $C_0>0$ such that, for all $\xi\in i[\mum^{\prime\prime},\mup^{\prime\prime}]+\cC^{\prime\prime}$ and $j=0$,
the bound \eq{der_psi_nu02} holds. It follows that there exists $C>0$ s.t.,
 if $\nu\in (0,2]$, then, for $\xi\in i[\mumpr,\muppr]+(\cC'\cup\{0\})$ and $j=0$, the following modified version of \eq{der_psi_nu02}
 holds:
 \bbe\label{der_psi_nu02mod}
 \max_{\eta\in B(\xi, c(1+|\xi|))}|\psi(\eta)|\le C (1+|\xi|)^{\nu}.
 \ee
  By the Cauchy integral theorem, for any
 $\xi\in [\mumpr,\muppr]+(\cC'\cup\{0\})$,
 \bbe\label{repr_in_cone}
 \psi(\xi)=\frac{1}{2\pi i}\int_{\dd B(\xi, c(1+|\xi|))}\frac{\psi(\eta)}{\xi-\eta}d\eta.
 \ee
 Differentiating under the integral sign $j$ times and using \eq{der_psi_nu02mod},  we obtain
\[
 |\psi^{(j)}(\xi)|\le C j! c^{-j-1}(2+|\xi|)^{\nu-j-1}\frac{1}{2\pi}\int_{\dd B(\xi, c(1+|\xi|))}|d\eta|
 \le C_1 j!c^{-j}(1+|\xi|)^{\nu-j},
 \]
 which proves the bound \eq{der_psi_nu02}. 

\subsection{Proof of Lemma \ref{lim_0}}\label{ss:proof_of_lim_0}
 We have 

\[
\int_{|z+\mu|=\eps, \Re z>-\mu}ST(\cG)(z)dz=\int_{\mu}^{+\infty} \ln\frac{t-\mu+i\eps}{t-\mu-i\eps}\cG(dt).
\]
Choose a function  $\de=\de(\eps)$ such that $\de(\eps)\to 0+$ and $\eps/\de(\eps)\to 0$ as  $\eps\to 0+$.
The integrand on the RHS above is uniformly bounded and $\cG((\mu,\mu+\de))\to 0$ as $\de\to 0$, hence, the integral over $(\mu,\mu+\de)$ tends to 0. On $(\mu+\de, 1)$, the integrand is $O(\eps/\de)$, and $\cG([\mu+\de,1])<\infty$, hence, the integral over $(\mu+\de, 1)$ 
tends to 0 as well. Finally, on $[1,+\infty)$, the integrand is bounded by $C\eps/t$, where $C$ is independent of $t, \eps$, and $\int_{[1,+\infty)} t^{-1}\cG(dt)<\infty$, hence, the integral over $[1,+\infty)$ tends to 0.

\subsection{Proof of Theorem \ref{thm1:SL} (a)}\label{ss:proof_thm1:SL}
 Let $\Im\xi\ge 0$ and $a_2=1, a_1=0$. Then we calculate
\beqast
\psi_+(\xi)&=&\int_0^{+\infty}(1-e^{ix\xi}+\bfo_{(0,1]}(x)ix\xi)\int_{(0,+\infty)} e^{-tx}t^2 \cG(dt) dx\\
&=&\int_{(0,+\infty)} \cG(dt)\left[t^2\int_0^{+\infty}e^{-tx}dx-t^2\int_0^{+\infty}e^{-(t-i\xi)x}dx+i\xi t^2 \int_0^1 x e^{-tx}dx\right]\\
&=&\int_{(0,+\infty)} \cG(dt)\left[t-\frac{t^2}{t-i\xi}+i\xi (1-e^{-t}(1+t))\right]\\
&=&\xi^2\int_{(0,+\infty)} \frac{\cG(dt)}{t-i\xi}-i\xi \int_{(0,+\infty)} \cG(dt)e^{-t}(1+t).\eqast
(To prove that Fubini's theorem is applicable, one represent the integral w.r.t. $x$ as the sum of integrals over $(0,1]$ and $(1,+\infty)$, and uses the bound $|1-e^{ix\xi}+\bfo_{(0,1]}(x)ix\xi|\le C(\eps)x^2, 0<x<1,$ to prove the absolute convergence).
Similarly, if $a_2=0, a_1=1$, we calculate
\beqast
\psi_+(\xi)&=&\int_0^{+\infty}(1-e^{ix\xi})\int_{(0,+\infty)} e^{-tx}t \cG(dt) dx\\
&=&\int_{(0,+\infty)} \cG(dt)\left[t\int_0^{+\infty}e^{-tx}dx-t\int_0^{+\infty}e^{-(t-i\xi)x}dx\right]\\
&=&\int_{(0,+\infty)} \cG(dt)\left[1-\frac{t}{t-i\xi}\right]
=-i\xi\int_{(0,+\infty)} \frac{\cG(dt)}{t-i\xi}.\eqast

\subsection{Calculation of the (s)SL-measures for Generalized Hyperbolic distributions}\label{hyper_SL_calc}
    In \eq{eq:modBessel3}, we change the variables $e^t=y+\sqrt{y^2-1}, dt=\frac{dt}{\sqrt{y^2-1}}$, then $y=y'+1$:
    \bbe\label{eq:modBessel3b}
    K_\la(z)=e^{-z}\int_0^\infty e^{-zy'}\frac{(y'+1+\sqrt{(y')^2+2y'})^\la
    +(y'+1+\sqrt{(y')^2+2y'})^{-\la}}{2\sqrt{(y')^2+2y'}}dy'.
    \ee
    We apply \eq{eq:modBessel3b} for $z=\de \sqrt{\al^2-(\be+i\xi)^2}$ for $\xi=it+\eps$, where 
    $t>\al+\be$ and $\eps\downarrow 0$ or $t<-\al+\be$ and $\eps\uparrow 0$. By symmetry, it suffices to consider the case $t>\al+\be$. We represent $z$ in the form 
    $z=\rho e^{i\varphi}$, where $\varphi \in (0,\pi/2)$ and $\rho=\rho_+(t)=\de\sqrt{(\al-\be+t)(t-\al-\be)}$, fix $\varphi$, rotate the line of integration to $e^{-i\varphi}\bR_+$,
    and change the variable $y'=e^{-i\varphi}u$. The result is
    \bbe\label{Kla_int}
    K_\la(\rho e^{i\varphi})=e^{-\rho e^{i\varphi}}I_\la(\rho,\varphi),
    \ee
 where
\[ I_\la(\rho,\varphi)=\int_0^{+\infty}e^{-\rho u}\frac{e^{-i\varphi\la}(u+e^{i\varphi}+\sqrt{u^2+2ue^{i\varphi}})^\la+
e^{i\varphi\la} (u+e^{i\varphi}+\sqrt{u^2+2ue^{i\varphi}})^{-\la}}{2\sqrt{u^2+2ue^{i\varphi}}}du.
 \]
 If $\la=1$ (the case of Hyperbolic processes), the integral simplifies:
  \bbe\label{Kla_-int2_la=1}
 I_1(\rho,\varphi)=e^{-i\varphi}\int_0^{+\infty}e^{-\rho u}\frac{u+e^{i\varphi}}{\sqrt{u^2+2ue^{i\varphi}}}du.
 \ee
 In the limit $\varphi=\pi/2-0$, we have
\[
 I_\la(\rho,\pi/2-0)=\int_0^{+\infty}e^{-\rho u}\frac{e^{-i\la\frac{\pi}{2}}(u+i+\sqrt{u^2+2ui})^\la+
e^{i\la\frac{\pi}{2}} (u+i+\sqrt{u^2+2u i})^{-\la}}{2\sqrt{u^2+2ui}}du,
   \]
   and 
   \bbe\label{Kla_-int2_la=1_lim}
 I_1(\rho,\pi/2-0)=-i\int_0^{+\infty}e^{-\rho u}\frac{u+i}{\sqrt{u^2+2ui}}du.
 \ee
 The density of the measure $\cG_+(\la;dt)=\bfo_{[\al+\be,+\infty)}(t)g_+(\la;t)dt$ is
\beqa\label{glimp}
g_+(\la;t)&=&-\Im \left[\ln I_\la(\rho_+(t),\pi/2-0)\right]+(1-\la)\rho_+(t)\pi/2\\\nonumber
&=&(1-\la)\frac{\pi}{2}\rho_+(t)-\mathrm{arg}\, I_\la(\rho(t),\pi/2-0).
\eqa
 It is easy to see that in cases $\la=0$ or $\la=1$, $\Im I_\la(\rho,\pi/2-0)<0$, hence,
 \[
\Im( -\ln I_\la(\rho,\pi/2-0))=-\mathrm{arg}\, I_\la(\rho,\pi/2-0)>0,
\]
and $g_+(\la;t)>0, t>\al+\be$. By symmetry, $\cG_-(\la;dt)=\bfo_{[\al-\be,+\infty)}(t)g_(\la;t)dt$, where
\bbe\label{glimm}
g_-(\la;t)=(1-\la)\frac{\pi}{2}\rho_-(t)-\mathrm{arg}\, I_\la(\rho_-(t),\pi/2-0),
\ee
where $\rho_-(t)=\de\sqrt{(t-\al+\be)(\al+\be+t)}$. Hence, if $\la=0,1$, the distribution $\sg(dx)$ is a SL distribution. 

For  $\la\in [-2,2]$,
we verify the condition $\Im I_\la(\rho,\pi/2-0)<0$ numerically checking that
for all $y>0$, the imaginary part of the integrand is negative, hence, if $\la\in [-2,1]$, $\sg(dx)$ is a regular SL-distribution. If $\la>1$, then, for large $t$, $g_\pm(\la,t)<0$, and $\sg(dx)$ is not a  SL-distribution.
If $\la<-2$, the imaginary part of integrand changes sign, hence, we cannot obtain a definitive result by these simple means. We only note that if $\la<-2$, then, as $u\to+\infty$, the leading term of asymptotics of the imaginary part of the fraction in front of $e^{-\rho u}$ is $-\sin(\la\pi/2)u^{-\la-1}$, hence, if $-\la\pi/2\in (1,2)\cup(3,4)\cup\cdots$, then,
as $\rho_\pm(t)\downarrow 0$, $g_\pm(\la,\rho_\pm)\to -\infty$. It follows that $\sg(dx)$ is not a SL-distribution if $\la<-2$ and either $\al-\be$ or $\al+\be$ are sufficiently close to 0 or $\de$ is. The question whether $\la<-2$ implies that $\sg(dx)$ is not
a SL distribution for all admissible $\de,\al,\be$ remains open.

    \subsection{Proof of  Theorem \ref{thm:one-sided_SL_SINH}}\label{ss:proof_thm:one-sided_SL_SINH}
 Fix $\ga\in (0,\pi/2)$, and let $\xi=\rho e^{i\varphi}$, where $\rho>0$ and $|\varphi|\le \ga$.
We have  
\beqast
\Re\psi_-(\xi)&=& \Re\int_\mu^{+\infty}\frac{\xi^2 \cG(dt)}{t+i\xi}\\
&=&\rho^2\Re\left\{(\cos(2\varphi)+i\sin(2\varphi))\int_\mu^{+\infty}\frac{(t-\rho\sin\varphi-i\rho\cos\varphi)\cG(dt)}{(t-\rho\sin\varphi)^2+\rho^2\cos^2\varphi}\right\}\\
&=&\rho^2 (\cos(2\varphi)I_1(\cG;\mu,\varphi;\rho)+\rho\sin\varphi  I_0(\cG;\mu,\varphi;\rho)),
\eqast
where
\beqast
I_k(\cG;\mu,\varphi;\rho)&:=&\int_\mu^{+\infty}\frac{t^k\cG(dt)}{(t-\rho\sin\varphi)^2+\rho^2\cos^2\varphi}.
\eqast
On the strength of \eq{eq:bound_SL_SINH_at},
 there exist $C_1,c_1, \rho_0>0>0$  such that for  $k=0,1,$ $\varphi\in [-\ga,\ga]$, and $\rho\ge \rho_0$,
\bbe\label{ineq:double_bound}
c_1\rho^{\nu+k-3}\int_{\mu/\rho}^{+\infty}\frac{t^{k+\nu-2}dt}{t^2+1}\le
I_k(\cG;\mu,\varphi;\rho)\le C_1 \rho^{\nu+k-3}\int_{\mu/\rho}^{+\infty}\frac{t^{k+\nu-2}dt}{t^2+1}.
\ee
(a) If $\nu=\al+2\in (1,2)$, 
\bbe\label{cknu12}
\int_{\mu/\rho}^{+\infty}\frac{t^{k+\nu-2}dt}{t^2+1}=c(k;\nu)+o(1),\ \rho\to +\infty,
\ee
where
\[
c(k;\nu)=\int_{0}^{+\infty}\frac{t^{k+\nu-2}dt}{t^2+1},
\]
therefore there exists $C_1$ independent of $\rho$ and $\varphi$ s.t.
\[
\Re\psi_-(\xi)\ge  \rho^\nu[c_1(c(1;\nu)\cos(2\varphi)+c(0,\nu)(\sin\varphi)_+)+C_1c(0,\nu)(\sin\varphi)_- ]+o(\rho^\nu)).
\]
Evidently, there exist $-\pi/2<\gam<0<\gap<\pi/2$ s.t. for all $\varphi\in (\gam,\gap)$,
\[
c_1c(1,\nu)(\cos(2\varphi)+(\sin\varphi)_+)+C_1c(0,\nu)(\sin\varphi)_- >0,\]
hence,  \eq{lowerbound_gen}  holds with $\cC_+=\cC_{\gampr,\gappr}$, for any $[\gampr,\gappr]\subset (\gam,\gap)$.
The upper bound $|\psi_-(\xi)|\ge C(1+|\xi|)^\nu$ is proved similarly. 

(b) If $\nu=1$, then, for $k=1$, \eq{cknu12} holds but 
\[
\int_{\mu/\rho}^{+\infty}\frac{t^{\nu-2}dt}{t^2+1}\sim \ln\rho+O(1),\ \rho\to +\infty,
\]
hence,
\[
\Re\psi_-(\xi)\ge  \rho[c_1(\c(1;1)\cos(2\varphi)+(\sin\varphi)_+\ln\rho)+(\ln\rho) C_1(\sin\varphi)_- ]+o(\rho^\nu)).
\]
It follows that there exist $c>0$ and  $\gap>0$ s.t. for $0\le \varphi\le \gap$,
\[
\Re\psi_-(\xi)\ge c\rho(1+(\sin\varphi)_+\ln\rho)
\]
(but if $\varphi<0$, then $\Re\psi_-(\xi)\sim -c(\varphi)\rho\ln\rho$, where $c(\varphi)>0$). 
The upper bound \[
|\psi_-(\xi)|\le C(1+|\xi|)^\nu \ln(2+|\xi|)\]
 is proved similarly, and we conclude that $X$ is SINH-regular of order $(1,1+)$.

(c) Let  $\nu=\al+1\in (0,1)$. Then we consider
\beqast
\Re\psi_-(\xi)&=& \Re\int_\mu^{+\infty}\frac{i\xi\cG(dt)}{t+i\xi}\\
&=&\rho\Re\left\{(i\cos(\varphi)-\sin(\varphi))\int_\mu^{+\infty}\frac{(t-\rho\sin\varphi-i\rho\cos\varphi)\cG(dt)}{(t-\rho\sin\varphi)^2+\rho^2\cos^2\varphi}\right\}\\
&=&\rho (-\sin\varphi I_1(\cG;\mu,\varphi;\rho)+\rho\cos(2\varphi ) I_0(\cG;\mu,\varphi;\rho)).
\eqast
The bound \eq{ineq:double_bound} assumes the form
\[
c_1\rho^{\nu+k-2}\int_{\mu/\rho}^{+\infty}\frac{t^{k+\nu-1}dt}{t^2+1}\le
I_k(\cG;\mu,\varphi;\rho)\le C_1 \rho^{\nu+k-2}\int_{\mu/\rho}^{+\infty}\frac{t^{k+\nu-1}dt}{t^2+1},
\]
and we continue trivially modifying the proof in the case $\nu\in (1,2)$.

(d) is proved similarly.

\subsection{Sufficient conditions for the existence of zeros on $i(\mum\mup)$}\label{ss:proof_of_Lemma_lem:lim0psi}
\begin{lem}\label{lem:lim0psi} Let $\cG\in \SM_\mu$, $\mu\ge 0.$   
Then, as $\eps\to 0+$, uniformly in $\varphi\in [-\pi/2,\pi/2]$,
 \begin{enumerate}[(i)]
 \item
$\Re ST(\cG)(-\mu+\eps e^{i\varphi})\to +\infty$ if $A:= \int_{[\mu,+\infty)}(t-\mu)^{-1}\cG(dt)=+\infty$; 
  \item
 if $A_1:=\int_{(\mu,+\infty)}(t-\mu)^{-1}\cG(dt)<+\infty$, then, as $\eps\downarrow 0$,\[ 
 \Re ST(\cG)(-\mu+\eps e^{i\varphi})=\eps^{-1}\cG(\{\mu\})\cos\varphi+A_1+o(1);\]
 \item
 if $\cG_+$ (resp., $\cG_-$) satisfies the condition in (i), then, for any $q>0$, the equation $q+\psi(\xi)=0$ has
 a solution on $i(\mum,0)$ (resp., on $i(0,\mup))$.
 \end{enumerate}
  \end{lem}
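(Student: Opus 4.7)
The plan is to handle (i) and (ii) by direct computation of the real part of the Stieltjes integrand, and then use (i) to force $\psi(it)\to-\infty$ at the endpoint $\mum$ of $i(\mum,0)$, so that (iii) follows from the intermediate value theorem.

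For (i), I would write
\begin{equation*}
\Re\frac{1}{t-\mu+\eps e^{i\varphi}}=\frac{t-\mu+\eps\cos\varphi}{(t-\mu+\eps\cos\varphi)^2+\eps^2\sin^2\varphi},
\end{equation*}
observe that this is non-negative on $[\mu,+\infty)$ for $\varphi\in[-\pi/2,\pi/2]$, and apply Fatou's lemma. The pointwise $\liminf$ as $\eps\downarrow 0$ equals $(t-\mu)^{-1}$ on $(\mu,+\infty)$ and $+\infty$ at $t=\mu$ whenever $\cos\varphi>0$, so the integral blows up exactly when $A=+\infty$. Uniformity in $\varphi$ I would extract from the crude lower bound
\begin{equation*}
\Re\frac{1}{t-\mu+\eps e^{i\varphi}}\ge \frac{c_0}{t-\mu}, \qquad t-\mu\ge\eps,
\end{equation*}
with $c_0$ independent of $\varphi\in[-\pi/2,\pi/2]$, plus a separate treatment of any mass within $\eps$ of $\mu$. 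For (ii), the atom at $\mu$ contributes exactly $\eps^{-1}\cG(\{\mu\})\cos\varphi$ to $\Re ST(\cG)(-\mu+\eps e^{i\varphi})$, while on $(\mu,+\infty)$ the integrand converges pointwise to $(t-\mu)^{-1}$ and is dominated by $2(t-\mu)^{-1}$ as soon as $\eps\le(t-\mu)/2$; dominated convergence against the finite measure $(t-\mu)^{-1}\cG(dt)|_{(\mu,+\infty)}$ then delivers the limit $A_1$.

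For (iii), by symmetry it is enough to treat $\cG_+$. The relation $\overline{\psi(\xi)}=\psi(-\bar\xi)$ makes $t\mapsto\psi(it)$ real on $(\mum,\mup)$, with $\psi(0)=0$. Substituting $\xi=it$, $t\in(\mum,0)$, into the representation \eq{eq:sSLrepr}, the contributions from $\cG^0_-$, $\sg^2$ and $\mu$ are continuous (and hence bounded) on $[\mum,0]$, because the only singularities of $ST(\cG^0_-)(-t)$ lie on $[\mup,+\infty)$, which is bounded away from $-t\in(0,-\mum)$. The remaining term is $(-a^+_2 t^2+a^+_1 t)ST(\cG^0_+)(t)$. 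I would apply (i) to the measure $\cG^0_+$ (supported on $[-\mum,+\infty)$, so with $\mu=-\mum$) at $\varphi=0$: the hypothesis $A=+\infty$ for $\cG_+$ transfers to $\cG^0_+$ since $\cG_+(dt)=(a_2t^2+a_1t)\cG^0_+(dt)$ and the factor $a_2t^2+a_1t$ is pinched between two positive constants in a neighbourhood of $t=-\mum$ (we may assume $a_2+a_1>0$, for otherwise $\cG_+$ does not appear in $\psi$). Hence $ST(\cG^0_+)(t)\to+\infty$ as $t\downarrow\mum$. The prefactor evaluated at $t=\mum<0$ equals $-a^+_2\mum^2+a^+_1\mum<0$, so $\psi(it)\to-\infty$ as $t\downarrow\mum$. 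Continuity of $t\mapsto\psi(it)+q$ on $(\mum,0]$ together with $\psi(0)+q=q>0$ yields, by the intermediate value theorem, a zero in $(\mum,0)$, i.e. a solution of $q+\psi(\xi)=0$ on $i(\mum,0)$.

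The main obstacle I anticipate is the uniformity claim in (i) in the presence of an atom at $\mu$: at $\varphi=\pm\pi/2$ the atom contributes nothing to the real part, so uniform divergence across the whole closed interval $[-\pi/2,\pi/2]$ really requires the non-atomic piece $A_1$ to already be infinite. For the application to (iii) this subtlety is inert, since only $\varphi=0$ is used and any atom of $\cG^0_+$ at $-\mum$ is already absorbed by the analysis of the prefactor $-a^+_2\mum^2+a^+_1\mum$.
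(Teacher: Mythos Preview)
Your approach to (i)--(ii) is exactly the paper's: separate the atom at $\mu$, compute $\Re(t-\mu+\eps e^{i\varphi})^{-1}$ explicitly, then use dominated convergence for (ii) and Fatou/monotone convergence for (i). Your uniformity argument via the explicit lower bound $\ge c_0/(t-\mu)$ for $t-\mu\ge\eps$ is in fact cleaner than the paper's reduction (``it suffices to prove for a fixed $\varphi$'' followed by a bound carrying a factor $\cos^2\varphi$), and the subtlety you flag---that an atom at $\mu$ contributes nothing at $\varphi=\pm\pi/2$, so uniform divergence on the closed interval requires $A_1=+\infty$---is real and is glossed over in the paper. For (iii), the paper simply declares the statement ``evident''; your IVT argument after forcing $\psi(it)\to-\infty$ via (i) at $\varphi=0$ is precisely what is intended.

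One point to tighten in (iii): your transfer of the hypothesis from $\cG_+$ to $\cG^0_+$ is incomplete. Since $\cG^0_+$ is a Stieltjes measure, the tail $\int_M^\infty(s+\mum)^{-1}\cG^0_+(ds)$ is always finite, so $A_{\cG^0_+}=+\infty$ can only come from the behaviour near $s=-\mum$. By contrast, $A_{\cG_+}=\int(s+\mum)^{-1}(a_2s^2+a_1s)\cG^0_+(ds)$ can diverge purely from the tail (the integrand behaves like $a_2s$), while its local part near $-\mum$ stays finite; in that situation $ST(\cG^0_+)(t)$ remains bounded as $t\downarrow\mum$, $\psi(i(\mum+0))$ is finite, and the conclusion fails for large $q$. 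The lemma is stated for $\cG\in\SM_\mu$, so the intended reading of $\cG_+$ in (iii) is the Stieltjes measure $\cG^0_+$; under that reading no transfer is needed and your argument is complete.
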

  \begin{proof}   An atom at $\mu$ contributes 
 $\Re(\cG(\{\mu\})/(\mu-\mu+\eps e^{i\varphi}))=\eps^{-1}\cG(\{\mu\})\cos\varphi$, hence, it remains to consider $\cG(dt)$ without an atom at $\mu$.  For any $\eps>0$, function $\varphi\mapsto ST(\cG)(-\mu+\eps e^{i\varphi})$ is continuous on 
the compact $[-\pi/2,\pi/2]$, hence, it suffices to prove (i) and (ii) for a fixed $\varphi$. For $t>\mu$, 
\beqast
\Re \frac{1}{-\mu+\eps e^{i\varphi}+t}
&=&\Re \frac{t-\mu+\eps\cos\varphi-i\eps\sin\varphi}{(t-\mu+\eps\cos\varphi)^2+ \eps^2\sin^2\varphi}=
\frac{t-\mu+\eps\cos\varphi}{(t-\mu+\eps\cos\varphi)^2+ \eps^2\sin^2\varphi}.
\eqast 
Hence, 
\[\Re ST(\cG)(-\mu+\eps e^{i\varphi})< \int_{(\mu,+\infty)}(t-\mu+\eps\cos\varphi)^{-1}\cG(dt)<\int_{(0,+\infty)}(t-\mu)^{-1}\cG(dt),
\]
and $\Re (1/(-\mu+\eps e^{i\varphi}+t))\to 1/(-\mu+t)$ as $\eps\to 0+$, for any $t>\mu$. 
Hence, if $A_1<\infty$, then (ii) is valid on the strength of the dominated convergence theorem. 

Let  $A_1=\infty$. If $\cos\varphi=0$, then, for every $t$, 
$
\eps\mapsto  \Re \frac{1}{-\mu+\eps e^{i\varphi}+t}=\frac{t-\mu}{(t-\mu)^2+\eps^2}$ increases as $\eps\downarrow 0$.
Hence, if the integrals 
\[
I(\varphi,\eps)=\int_{(\mu,+\infty)}\Re \frac{1}{t-\mu+\eps e^{i\varphi}}\cG(dt)
\] are bounded by a finite constant $B(\varphi)$ independent of $\eps>0$, then $A\le B(\varphi)<\infty$ by Fatou's lemma, contradiction. If $\cos\varphi>0$, we use $ \eps^2(\sin\varphi)^2<(\tan\varphi)^2 (t-\mu+\eps\cos\varphi)^2$ to obtain the bound
\[
I(\varphi,\eps)(1+\tan^2\varphi)\ge I_1(\varphi,\eps):=\int_{(\mu,+\infty)}\frac{1}{t-\mu+\eps \cos\varphi}\cG(dt).
\]
The same argument as in the case $\cos\varphi=0$ shows that $I_1(\varphi,\eps)\to+\infty$, hence, $I(\varphi,\eps)\to+\infty$. Part (iii) is evident.

\end{proof}

 \subsection{Proof of of Lemma \ref{lem:limcuts}}\label{ss:proof of Lemma lem:limcuts}
   We prove the first bound; the second one follows from $\overline{ST(\cG)(z)}=ST(\cG)(\bar z)$.
   Let $\eps>0$. 
   We have $\Im (-b'+i\eps+t)^{-1}=-\eps ((t-b')^2+\eps^2)^{-1}$, therefore
   \beqast
  \Im \int_{(0,+\infty)}(-b'+i\eps+t)^{-1}\cG(dt)&=&-\eps \int_{(b-\de,b+\de)}((t-b')^2+\eps^2)^{-1}\cG(dt)+O(\eps)\\
  &\le &-c\eps \int_{(b-\de,b+\de)}((t-b')^2+\eps^2)^{-1}dt+O(\eps)\\
  &=& -c\int_{(b-\de-b')/\eps}^{(b+\de-b')/\eps}\frac{dt}{t^2+1}+O(\eps)\\
  &=&-c\int_{-\infty}^{+\infty}\frac{dt}{t^2+1}+O(\eps),
  \eqast
  where the constant for the $O$-terms can be chosen the same for all $b'\in (b-\de',b+\de')$.

\section{Outline of the conformal acceleration method}\label{s:CONF}

\subsection{Evaluation of probability distributions and pricing  European options}\label{ss:Europrice}
Efficient evaluation of probability distributions and expectations in L\'evy models are possible under certain regularity conditions
on $\psi$. The first two conditions are used in \cite{genBS,KoBoL,NG-MBS,barrier-RLPE,BLSIAM02} to define the class of Regular L\'evy Processes of Exponential type (RLPE):
(1) $F(dx)$ is absolutely continuous, and the L\'evy density $f(x)$ exponentially decays as $x\to \pm\infty$, equivalently, $\psi$ admits analytic continuation to a
strip $S_{(\mum,\mup)}$, where $\mum<0<\mup$; (2) $f(x)\sim c_\pm |x|^{-\nu_\pm-1}, x\to 0\pm$, where $c_\pm>0$
and $\nu_\pm\in [0,2)$. If $\nu_\pm\not\in\{0,1\}$, an equivalent condition is: as $\xi\to \infty$ in $S_{(\mum,\mup)}$, $\psi(\xi)$ stabilizes to
a positively homogeneous function (if $\nu_\pm\in\{0,1\}$, additional log-factor emerges). As it is remarked in op.cit., all classes of L\'evy processes used in finance enjoy properties (1) and (2); several classes of L\'evy processes constructed later also enjoy these properties. 
The importance of  properties (1)-(2)  are already seen in the simplest case 
of calculation of expectations of the form
\bbe\label{Europrice:def}
V(t,x)=\bE^{\bQ}\left[ e^{-r(T-t)}G(X_T)\ |\ X_t=x\right].
\ee
 (This is the price of the European option with the payoff $G(X_T)$ at the maturity date $T$, in the market with the constant riskless rate $r$, under the measure $\bQ$ chosen for pricing.)  Assuming that the Fourier transform 
\bbe\label{eq:defFT}
\hG(\xi)=(2\pi)^{-1}\int_\bR e^{-ix\xi}G(x)dx
\ee
is well-defined in a strip $S_{(\mumpr,\muppr)}$, where $\mum\le \mumpr<\muppr\le \mup$, we expand $G$ in the Fourier integral, substitute the result into \eq{Europrice:def}, apply Fubini's theorem to change the order of taking expectation and integration,
and obtain, for any $\om\in (\mumpr,\muppr)$, 
\bbe\label{Europrice:FT}
V(x,t)=(2\pi)^{-1}\int_{\Im\xi=\om} e^{i x'\xi+(T-t)(r+\psi^0(\xi))}\hG(\xi)d\xi,
\ee
where $x'=x+\mu(T-t)$ and $\mu$ is the one in  \eq{reprpsi}. 
The simplest way to evaluate the integral on the RHS of \eq{Europrice:FT} is to apply the simplified trapezoid rule.
The error of 
the infinite trapezoid rule decays as $\exp[-2\pi d/\ze]$, where
$d$ is the half-width of the strip of analyticity around the line of integration, and $\ze$ is the step. See, e.g., Thm. 3.2.1 in \cite{stenger-book}.  If the strip of analyticity is not too narrow, it is  easy to satisfy a very small error tolerance for the discretization error.
However, one must choose the line of integration and $\ze$ carefully otherwise serious errors may result. 
 Popular variations of this straightforward approach such as Carr-Madan method \cite{carr-madan-FFT} and COS method 
\cite{COS, COS2,COS3} introduce additional errors which lead to systematic errors in practically important situations. 
Lewis-Lipton formula is the standard Fourier inversion formula with the prefixed 
line of integration; the choice of the line is non-optimal in almost all cases and increases the CPU time. See \cite{iFT0,iFT,one-sidedCDS,MarcoDiscBarr,pitfalls,HestonCalibMarcoMe,HestonCalibMarcoMeRisk,SINHregular,BSINH} for details and numerical examples.

 \subsection{Conformal acceleration method}\label{ss:conf_accel}
In many cases of interest, the integrand decays slowly at infinity, and a very large number of terms of the truncated sum (simplified trapezoid rule) is needed to satisfy even a moderate error tolerance. However,
 in the case of standard European options, and in the case of piece-wise polynomial approximations of complicated payoffs
 \cite{single,MarcoDiscBarr,levendorskii-xie-Asian}, $\hG$ is meromorphic with a finite number of simple poles;
 in \cite{BSINH}, approximations with infinite number of poles appear.  If $X$ is SINH-regular of order $(\nu',\nu)$ with $\nu'>0$, one can use an appropriate conformal deformation and the corresponding change of variables to reduce calculations to the case of an integrand which is analytic in a strip around the line of integration and decays at infinity faster than exponentially. 
 Generally, the most efficient
change of variables ({\em sinh-acceleration}) is of the form $\xi=\chi_{\om_1, \om; b}(y)$, where $\om_1, \om \in \bR, b>0$ and
\begin{equation}\label{sinhbasic}
\chi_{\om_1, \om; b}(y)=i\om_1+b\sinh (i\om+y).
\end{equation}
If $x'>0$,   the contour of integration is deformed
 upwards, hence, $\om>0$ is used, if $x'<0$, the contour of integration is deformed
 downwards, hence, $\om<0$ is used. If $x'=0$, $\om$ of either sign can be used but, depending on
 the cone $\cC_+=\cC_{\gam,\gap}$, one of the choices is better. Typically, the choice
 $\om=(\gam+\gap)/2$ is optimal.  In \cite{SINHregular}, the reader can find  detailed general recommendation for the choice of the parameters
$\om_1,b,\om$ and the grid for the simplified
  trapezoid rule needed to satisfy the desired error tolerance. 
  
  In a number of applications, we also used   {\em fractional-parabolic 
  deformations}
 \begin{equation}\label{fractparabasic}
\chi^\pm_{\om;a; b}(\eta)= i\om\pm ib(1\mp i\eta)^a,
\end{equation}
where $\om\in\bR, a>1, b>0$, and {\em hyperbolic} or {\em sub-polynomial  deformations} defined by $\xi=\eta\ln^m(1+b\eta^2)$, where 
$m\ge 1, b>0$. See \cite{ConfAccelerationStable} for the general discussion, applications to efficient evaluation of special functions, analogs of the three families for evaluation of stable distributions with explanation when the seemingly less efficient families are more efficient than the sinh-acceleration,  and further references. In \cite{Sinh},
the composition of fractional-parabolic and sinh changes of variables was used.
 
 The complexity of the numerical scheme 
 based on the sinh-acceleration \eq{sinhbasic} is of the order of $E\ln E$, where $E=\ln(1/\eps)$; in the case of VGP with $\mu=0$, of the order of $O(E^2)$. See \cite{SINHregular} for details. The idea of conformal acceleration method is similar to the idea of the saddle point method.
 However,  the universal families of conformal deformations are simpler to use, especially when deformations of several lines of integration
 are needed, and the deformations must be in a certain agreement 
 \cite{paraLaplace,paired,Contrarian}. 
 
 \subsection{Barrier options} 
 Consider the no-touch barrier option on an asset $S_t=X_t$, with the lower barrier $H=e^h$, and maturity date $T$. The riskless rate $r$ is constant. Assume that under an EMM chosen for pricing, $X$ is SINH-regular of type $((\mum,\mup), \cC, \cC_+)$, where $\mum<0$ and $\bC_+$ contains $\bR\setminus \{0\}$. In \cite{KoBoL,barrier-RLPE,NG-MBS}, we have derived general formulas for prices of barrier options. In the case of the no-touch option, the general formula reduces to
 \bbe\label{VftdEpv3}
V(h;T;x)=\frac{e^{-rT}}{(2\pi)^2i}\int_{\Re q=\sg}e^{qT}q^{-1}\int_{\Im\xi=\om^0}e^{i(x-h)\xi}\frac{\phimq(\xi)}{-i\xi}d\xi\,dq,
\ee
where $\om^0\in (\mum,0)$ is sufficiently small in absolute value, and $\phimq(\xi)$ is the `{\em minus-Wiener-Hopf factor}. Recall that if $q>0$, then $\phimq(\xi)=\bE[e^{i\xi \uX_{T_q}}]$, where $T_q$ is the exponentially distributed random variable of mean $1/q$
independent of $X$, and $\uX_t=\inf_{0\le s\le t}X_s$ is the infimum process. Since $\psi$ admits analytic continuation to the strip
$S_{(\mum,\mup)}$, there exist $\sg_-(q)<0<\sg_+(q)$ s.t. for any $\om'\in (\sg_-(q), \sg_+(q))$  and  any  $\xi\in \{\Im\xi<\om'\}$,
\begin{equation}\label{phim1}
\phimq(\xi)=\exp\left[-\frac{1}{2\pi i}\int_{\Im\eta=\om'}\frac{\xi \ln (q+\psi(\eta))}{\eta(\xi-\eta)}d\eta\right].
\end{equation}
Since $X$ is SINH-regular, the RHS admits analytic continuation w.r.t. $q$ and $\xi$ to unions of strips and coni,
denote these unions $U_1$ and $U_2$, respectively, and the choice of $U_2$ imposes restrictions on the choice of $U_2$.

If the Gaver-Stehfest method is used, one needs to calculate the inner integral in \eq{VftdEpv3} for positive $q$'s only. 
Since $x-h>0$, we deform the inner contour into a contour of the form $\cL_1:=\chi_{\om_1;b;\om}(\bR)$, where $\om\in (0, \gap)$;
the restrictions on the choice of other parameters depend on $q$ and $\psi^0$:   in the process of deformation, we must have 
$q+\psi(\xi)\neq 0$ because the analytic continuation of $\phimq(\xi)$ to $\cC_+\cup \{\Im\xi>0\}$ is defined using the Wiener-Hopf factorization formula
\bbe\label{whf}
\phipq(\xi)\phipq(\xi)=\frac{q}{q+\psi(\xi)},
\ee
where $\phipq(\xi)=\bE[e^{i\xi \barX_{T_q}}]$, and $\barX_t$ is the supremum process. It follows that if $X$ is an SL-process, then
any $\om\in (0,\pi/2)$ can be chosen, and the only restriction is that $q+\chi_{\om_1;b;\om}(0)>0$. If one   of the zeros
of $q+\psi(\xi)$ on $i\bR$ is crossed, the residue theorem is applied. The contour in \eq{phim1} is deformed downward into a contour
$\cL_2:=\chi_{\om'_1;b';\om'}(\bR)$, where $\om'<0$; if $X$ is an SL process, any $\om'\in (-\pi/2,0)$ can be used. The two crucial restrictions are weaker: 1) $\cL_2$ is above $i\mum$; 2) $\cL_1$ is above $\cL_2$ so that the integrand is well-defined in the process of deformation, for all $\xi$.

If the conformal change of variables $q=\tilde\chi_{\sg;b;\om}(y_1)=\sg+ib\sinh(i\om+y_1)$, $\sg>0$, $b>0$, $\om>0$ in the Bromwich integral is used,
then the parameters of $\cL_1,\cL_2$ and $\cL_3:=\tilde\chi_{\sg;b;\om}(R)$ must be chosen so that, in the process of all three deformations, 
$q+\psi(\eta)\neq 0$ and $q+\psi(\xi)\neq 0$, $\xi-\eta\neq 0$. For details, see \cite{paraLaplace}, where the less efficient family of fractional-parabolic deformations was used. The reformulation of the procedures in \cite{paraLaplace} to the case of sinh-deformations is
straightforward.


 If the payoff of the barrier option depends on $X_T$, then the explicit formula
derived in \cite{KoBoL,barrier-RLPE,NG-MBS} involves the triple integral (and two Wiener-Hopf factors must be calculated);
if the option pays when the barrier is crossed, then quadruple integrals appear \cite{Contrarian}. For explicit efficient procedures
for evaluation of these integrals, see \cite{Contrarian}.

\end{document}